\documentclass[12pt]{article}  %

\pdfoutput=1
\usepackage[utf8]{inputenc}
\usepackage[T1]{fontenc}


\usepackage{amsmath}
\usepackage{amsfonts}
\usepackage{dsfont}
\usepackage{amssymb}
\usepackage{amsthm}
\usepackage{graphicx}
\usepackage{setspace}
\usepackage{verbatim}
\usepackage{natbib}
\usepackage{booktabs}
\usepackage{hyperref}
\usepackage{cleveref}
\usepackage{subcaption}
\usepackage{dcolumn}
\usepackage{xcolor}
\definecolor{GmailBlue}{RGB}{42, 93, 176}
\hypersetup{
    colorlinks = true,
    allcolors = GmailBlue
}
\usepackage[margin=1.25in]{geometry}
\usepackage[title]{appendix}

\crefname{appsec}{Appendix}{Appendices}
\crefname{sappsec}{Supplemental Appendix}{Supplemental Appendices}

\bibliographystyle{ecta}

\newtheorem{theorem}{Theorem}[section]
\newtheorem{lemma}{Lemma}[section]
\newtheorem{corollary}{Corollary}[section]
\newtheorem{prop}{Proposition}[section]
\newtheorem{assumption}{Assumption}[section]

\theoremstyle{definition}
\newtheorem{definition}{Definition}
\newtheorem{remark}{Remark}[section]

\onehalfspacing

\DeclareMathOperator{\E}{\mathbf{E}}
\DeclareMathOperator{\p}{\mathbf{P}}

\title{Adaptive Inference in Multivariate Nonparametric Regression Models Under
  Monotonicity\thanks{We thank our advisors Donald Andrews and Timothy Armstrong
    for continuous support. Xiaohong Chen, Yuichi Kitamura, and participants at
    the Yale Econometrics Prospectus Lunch provided valuable feedback. We thank
    David Tomas Jacho-Chavez for providing us with the dataset used in Section
    \ref{sec:empir-illustr}.}}

\author{Koohyun
  Kwon \thanks{Department of Economics, Yale University,
    \texttt{koohyun.kwon@yale.edu}} \and Soonwoo Kwon \thanks{Department of
    Economics, Yale University, \texttt{soonwoo.kwon@yale.edu}}} \date{\today}

\date{November 26, 2020}

\begin{document}

\maketitle


\begin{abstract}
  We consider the problem of adaptive inference on a regression function at a point under a multivariate nonparametric regression setting. The regression function belongs to a H\"older class and is assumed to
  be monotone with respect to some or all of the arguments. We derive the
  minimax rate of convergence for confidence intervals (CIs) that adapt to the
  underlying smoothness, and provide an adaptive inference procedure that
  obtains this minimax rate. The procedure differs from that of
  \cite{cai2004adaptation}, intended to yield shorter CIs under practically
  relevant specifications. The proposed method applies to general linear
  functionals of the regression function, and is shown to have favorable
  performance compared to existing inference procedures.
\end{abstract}

\newpage

\section{Introduction}

We consider the problem of inference on a regression function at a point under
the nonparametric regression model
\begin{equation*}
  y_{i} = f(x_{i}) + u_{i}, \quad u_{i} \overset{\mathrm{i.i.d.}}{\sim} N(0, \sigma^{2}(x_{i})),
\end{equation*}
where $f$ is assumed to lie in a H\"older class with exponent
$\gamma \in (0,1]$. Procedures based on $\gamma$ is conservative (or suboptimal) when the
true regression function in fact lies in a smoother H\"older class with
$\gamma' > \gamma$. Adaptive procedures try to overcome this issue by
automatically adjusting to the (unknown) underlying smoothness class. However,
unlike in the case of estimation, where adaptation to the unknown smoothness
class is in general possible with an additional logarithmic term
(\citealp{lepskii1991ProblemAdaptiveEstimationa}), adaptation is impossible in the case of inference without further
restrictions on the function class (\citealp{low1997nonparametric}) .

Two shape restrictions that can be used to overcome this impossibility have been
discussed in the literature, convexity and monotonicity. In this paper, we
impose monotonicity on the regression function to construct a CI that adapts to
the underlying smoothness of the regression function. The main difference with other papers that consider
adaptation under a monotonicity condition (\citealp{cai2013AdaptiveConfidenceIntervals}; \cite{armstrong2015AdaptiveTestingRegression}) is our general treatment of the
dimension of $x_{i}$. To our knowledge, this is the first
paper to construct adaptive CIs, under
a multivariate nonparametric regression setting. 

We consider coordinate-wise monotonicity with respect to all or some of the
coordinates. A function $f$ is \textit{coordinate-wise monotone} with respect to
$\mathcal{V} \subseteq \{1, \dots, k\}$ if $x_{j}\geq z_{j}$ for all
$j \in\mathcal{V}$ and $x_{j}= z_{j}$ for all $j\notin\mathcal{V}$ imply
$f(z) \geq f(z)$. The minimax expected length of a CI over the H\"older class
with exponent $\gamma$ converges to $0$ at the well-known rate of
$n^{-1/(2 + k/\gamma)}$. When the regression function is monotone in all
variables, i.e., $\mathcal{V} = \{1, \dots, k\}$, we can construct a CI that
achieves this minimax rate over all $\gamma \in (0,1]$ just as in the univariate
case. Also, again as in the univariate case, if the regression is not monotone
to any of the variables so that $\mathcal{V} = \emptyset$, there is no scope for
adaptation.

An interesting case is when the function is monotone with respect to only some
of the variables so that $k_{+} := \lvert \mathcal{V} \rvert < k$, which can arise due to the multivariate nature of the problem. In this case, we
show that for a CI that maintains coverage over the H\"older class with exponent
$\gamma$, the minimax expected length over a smoother class $\gamma' > \gamma$
converges to $0$ at the rate $n^{-1/(2 + k_{+}/\gamma' +
  (k-k_{+})/\gamma)}$. The denominator of the exponent can be written as
$2 + k/\gamma - k_{+}( 1/\gamma- 1/\gamma')$. This is the sum of a term that comes from the minimax rate over $\gamma$, $2 + k/\gamma$, and $- k_{+}( 1/\gamma- 1/\gamma')$. In this sense,
$k_{+}( 1/\gamma- 1/\gamma')$ exactly quantifies the possible gain from
monotonicity, indicating larger gains if the regression function is monotone in
more variables and/or smoother.

We propose a CI that obtains this minimax rate (of adaptation) for a sequence of
H\"older exponents $\{\gamma_{j}\}_{j=1}^{J} \subset (0, 1]$. While the method
provided by \cite{cai2004adaptation} can be used to construct such a CI, we
provide an alternative method that builds upon the one-sided CI proposed by
\cite{armstrong2018optimal}. Their one-sided CI ``directs power'' to a smoother class while maintaining
coverage over a larger class of functions. Our CI is constructed by combining
the lower and upper versions of their one-sided CI to create a two-sided CI, and
then taking the intersection of a sequence of such two-sided CIs that direct
power to each $\gamma_{j}$. An appropriate Bonferroni correction is used to
obtain correct coverage. This CI can be used in more general nonparametric
regression settings, as long as the parameter of interest is a linear functional
of the regression function and the regression functions lies in a convex function
class.

While the proposed CI obtains the minimax length over $\gamma_{j}$ for each $j$
up to a constant factor that does not depend on the sample size, this constant
does depend on the number of parameter spaces $J$ the CI adapts to. This is in
contrast with the CI of \cite{cai2004adaptation}, which gives a multiplicative
constant that does not depend on $J$. However, the multiplicative constant of
our CI grows slowly with $J$ at a $(\log J)^{1/2}$ rate, and is smaller than the
constant given by \cite{cai2004adaptation} for any reasonable specification of
$J$. Even if one wishes to adapt to $J= 10^{3}$ parameter spaces, our CI obtains
the minimax expected length of each parameter space within a multiplicative
constant of $4.14$, whereas this constant is $16$ for the CI by
\cite{cai2004adaptation}. A simulation study confirms that our CI can be
significantly shorter in practice as well. Nonetheless, the uniform constant
that \cite{cai2004adaptation} obtain is theoretically attractive and allows one
to adapt to the continuum of H\"older exponents $(0,1]$ in this context.\\

\noindent \textbf{Related literature. }An adaptation theory for CIs in a nonparametric regression setting was developed
by \cite{cai2004adaptation}. \cite{cai2013AdaptiveConfidenceIntervals} provide a
procedure for constructing adaptive CIs that adapt to each individual function
under monotonicity and convexity. \cite{armstrong2015AdaptiveTestingRegression}
provides an inference method for the regression function at a point, possibly on
the boundary of the support, that adapts to the underlying H\"older classes
under a monotonicity assumption. As noted earlier, the main difference of our
paper is that we consider a multivariate regression setting where there is no
restriction on the dimension of the independent variable as long as it is fixed
and finite. The adaptation theory for CIs builds upon the more classical minimax
theory for CIs, which has been developed in \cite{Donoho1994} and
\cite{low1997nonparametric}. \cite{cai2012MinimaxAdaptiveInference} provides an
excellent review on the theory of minimax and adaptive CIs, along with the
minimax and adaptive estimation problems.

While the focus of this paper is on adaptive CIs, there are other forms of
confidence sets that are of interest in the context of nonparametric regression
setting. Adaptive confidence balls have been considered in
\cite{genovese2005ConfidenceSetsNonparametric},
\cite{cai2006AdaptiveConfidenceBalls} and
\cite{robins2006AdaptiveNonparametricConfidencea}.  An adaptation theory for
confidence bands has been considered in, for example,
\cite{dumbgen1998NewGoodnessoffitTests},
\cite{genovese2008AdaptiveConfidenceBands}, and
\cite{cai2014AdaptiveConfidenceBands}.  In the context of density estimation,
adaptive confidence bands have also been considered in
\cite{hengartner1995FiniteSampleConfidenceEnvelopes},
\cite{gine2010ConfidenceBandsDensitya}, and
\cite{hoffmann2011AdaptiveInferenceConfidencea}.

Recently, there has been interest in isotonic regression in general
dimensions. The monotonicity condition imposed in such models is the same as the
one we impose here with $\mathcal{V} = \{1, \dots, k\}$.
\cite{han2019IsotonicRegressionGeneral} derive minimax rates for the least
squares estimation problem. \cite{deng2020ConfidenceIntervalsMultiple} provide a
method for constructing CIs at a point based on block max-min and min-max
estimators.\\

\noindent \textbf{Outline.} Section \ref{sec:Setup} describes the nonparametric
regression model and the function class we consider. Section
\ref{sec:adapt-conf-interv} introduces the notion of adaptivity in more detail
and describes our procedure for constructing adaptive CIs. Section
\ref{sec:adapt-infer-f0} presents the main result of the paper, the minimax rate of adaptation, and an adaptive CI that obtains this rate by solving the corresponding
modulus problem. Section \ref{sec:Simulation} provides a simulation study, and
Section \ref{sec:empir-illustr} illustrates our method in the
context of production function estimation.

Any proof omitted in the main text can be found in the appendix. Appendix
\ref{appdx-sec:Proofs_lemmas} collects the proofs for lemmas and
corollaries. Appendix \ref{sec: proof main thm} contains the proof for our main
theoretical result, Theorem \ref{thm:rate_result}.


\section{Nonparametric Regression Under Monotonicity\label{sec:Setup}}

We observe $\left\{ \left(y_{i},x_{i}\right)\right\} _{i=1}^{n}$ and consider a
nonparametric regression model,
\begin{equation}
  y_{i}=f(x_{i})+u_{i},\label{eq:npr_model}
\end{equation}
where $x_{i}\mathcal{\in X}\subset\mathbb{R}^{k}$ is a (fixed) regressor,
$f:\mathbb{R}^{k}\to\mathbb{R}$ is the unknown regression function that lies in
some function class $\mathcal{F}$, and $u_{i}$'s are independent with
$u_{i}\sim N(0,\sigma^{2}(x_{i}))$ and $\sigma^{2}(\cdot)$ known. The parameter
of interest is $f(x_{0})$. For the rate results provided in Section \ref{sec:Asymptotic-rate-results}, we require that $x_{0} \in \mathrm{Int}\,
\mathcal{X}$. However, we note that the solution to the modulus problem given in Section \ref{sec:solut-modul-probl} does not depend on whether $x_0$ is on the boundary or not. Without loss of generality, we normalize $x_{0}$ to be $0$.

We take the $\mathcal{F}$ to be the class of functions that are H\"older
continuous and nondecreasing in all or some of the variables. Let
$\Lambda(\gamma,C)$ denote the set of functions from $\mathbb{R}^{k}$ to
$\mathbb{R}$ that are H\"older continuous with H\"older constants $(\gamma,C)$,
\begin{equation*}
  \Lambda(\gamma,C):=\left\{ f\in\mathcal{\mathcal{F}}(\mathbb{R}^{k},
    \mathbb{R}):\left|f(x)-f(z)\right\rvert \leq C\left\Vert x-z \right\Vert
    ^{\gamma}\text{ for all }x,z\in\mathcal{X}\right\} ,
\end{equation*}
where $\mathcal{F}\left(\mathbb{R}^{k}, \mathbb{R}\right)$ is the set of
functions from $\mathbb{R}^{k}$ to $\mathbb{R}$, $\gamma\in[0,1],$ $C \geq 0$
and $\left\Vert \cdot\right\Vert $ is a norm on $\mathbb{R}^{k}$. For notational
simplicity, we omit the dependence of the function class on the choice of the
norm $\lVert \cdot \rVert$. We impose the following restriction that
$\lVert \cdot \rVert$ is monotone in the magnitude of each element, which is
satisfied by most norms used in practice. such as the $\ell_{p}$ norm or a
weighted version of it. We discuss the relationship between this assumption and
the monotonicity of the regression function in Remark \ref{rem:norm}.
\begin{assumption}
  \label{assu:norm}$\left\Vert \cdot\right\Vert $ is a norm on $\mathbb{R}^{k}$
  such that $\left\Vert z \right\Vert $ is nondecreasing in
  $\left\lvert z_{j}\right\rvert $ for each $j\in\{1,...,k\}.$
\end{assumption}

We now define the (coordinate-wise) monotone H\"older class. For a subset of the
covariate indices $\mathcal{V}\subset\left\{ 1,\dots,k\right\} $, write
\begin{equation*}
  \Lambda_{+,\mathcal{V}}(\gamma,C):=\left\{ f\in\Lambda(\gamma,C):f(x)\geq
    f(z)\,\,\text{if }x_{j}\geq z_{j}\text{ }\forall
    j \in\mathcal{V} \text{ and }x_{j}= z_{j}\,\,\forall
    j\notin\mathcal{V}\right\}.
\end{equation*}
This is the set of H\"older continuous functions that are nondecreasing,
coordinate-wise, with respect to the $j$th element for $j \in
\mathcal{V}$. Define $k_{+}:=\left\lvert \mathcal{V}\right\rvert .$ By a
relabeling argument, it is without loss of generality to write
$\mathcal{V}:=\left\{ 1,\dots,k_{+}\right\} $.  If $k_{+}=k$, then
$\Lambda_{+,\mathcal{V}}(\gamma,C)$ is the set of nondecreasing and H\"older
continuous functions where the monotonicity is with respect to the
coordinate-wise partial ordering on $\mathbb{R}^{k}$.

\section{Adaptive Confidence Intervals}
\label{sec:adapt-conf-interv}
\subsection{Notion of Adaptivity}

In this section, we discuss the problem of inference for a general linear
functional of the regression function, $Lf$. Consider a sequence of convex
parameter spaces $\mathcal{F}_{1}$, \dots, $\mathcal{F}_{J}$, with the
requirement that $\mathcal{F}_{j} \subset \mathcal{F}_{J}$ for all $j \leq
J$. Note that the parameter spaces are not necessarily nested, but there is a
largest convex parameter space that nests all the other parameter spaces. Here,
$\mathcal{F}_{J}$ reflects a conservative choice of the parameter space where
the researcher believes the true regression function to lie in. Hence, the CI we
construct will be required to maintain correct coverage over this space. An
adaptive CI maintains this correct coverage over the largest parameter space
$\mathcal{F}_{J}$ while having good performance (e.g. shorter expected length)
when the true function happens to lie in the smaller parameter space
$\mathcal{F}_{j}$, simultaneously for all $j \leq J$.

Then, a natural question is how well a CI that maintains coverage over
$\mathcal{F}_{J}$ can perform over $\mathcal{F}_{j}$, which is one of the main
questions that \cite{cai2004adaptation} raise and address in detail in the
context of two-sided CIs. The case of one-sided CIs has been considered by
\cite{armstrong2018optimal}, along with other questions.

\subsubsection{Two-sided Adaptive CIs}
\label{sec:two-sided-adaptive}

Let $\mathcal{I}_{\alpha,2}^{J}$ denote the set of all two-sided CIs that have
coverage at least $1-\alpha$ over $\mathcal{F}_{J}.$ Following
\citet{cai2004adaptation}, the performance criterion we consider for two-sided
CIs is the worst-case expected length. That is, the performance of a CI, $CI$,
over the parameter space $\mathcal{F}_{j}$ is measured by
$\sup_{f\in\mathcal{F}_{j}}\E_{f}\mu(CI)$ with smaller values of this quantity
meaning better performance. Here, $\E_{f}$ denotes the expectation when the true
regression function is $f$ and $\mu$ is the Lebesgue measure on the real
line. Then, the shortest possible worst-case expected length a CI can achieve
over $\mathcal{F}_{j}$ (while maintaining correct coverage over
$\mathcal{F}_{J}$) is characterized by the quantity
\begin{equation*}
  L_{j,J}^{\ast}:=\inf_{CI\in\mathcal{I}_{\alpha,2}^{J}}\sup_{f\in\mathcal{F}_{j}}\E_{f}\mu(CI).
\end{equation*}

Following \citet{cai2004adaptation}, we say a CI is \textit{adaptive} if it
achieves $L_{j,J}^{\ast}$ for all $j \leq J$ up to a multiplicative constant
that does not depend on the sample size. Let $z_{q}$ denote the $q$--quantile of
the standard normal distribution.  \cite{cai2004adaptation} show that
$L_{j,J}^{\ast} \asymp\omega_{+}(z_{1-\alpha},\mathcal{F}_{j},\mathcal{F}_{J}),$
with $\asymp$ denoting asymptotic equivalence\footnote{We write
  $a_{n}\asymp b_{n}$ if
  $
  0<\underset{n\to\infty}{\lim\inf}({a_{n}/}{b_{n}})\leq\underset{n\to\infty}{\lim\sup}({a_{n}}/{b_{n}})<\infty.$}
and $\omega_{+}(\delta,\mathcal{F}_{j},\mathcal{F}_{J})$ is the \textit{between
  class modulus of continuity} defined as
\begin{align*}
  &\omega_{+}(\delta,\mathcal{F}_{j},\mathcal{F}_{J}) \\
  :=&\sup\left\{
      \left\lvert Lf_{J}-Lf_{j}\right\rvert :\textstyle\sum_{i=1}^{n}\left((f_{J}(x_{i})-f_{j}(x_{i}))/\sigma(x_{i})\right)^{2}\leq\delta^{2},f_{j}\in\mathcal{F}_{j},f_{J}\in\mathcal{F}_{J}\right\},
\end{align*}
for $\delta \geq 0.$\footnote{Note that the definition
  is slightly different with \cite{cai2004adaptation} due to the $\sigma(x_{i})$
  term that appears in the denominator of the summand. This is because we divide
  both sides of \eqref{eq:npr_model} by the (known) $\sigma(x_{i})$ to convert
  the model into the same form as that of \cite{cai2004adaptation}.} In general,
$\omega_{+}(z_{1-\alpha},\mathcal{F}_{j},\mathcal{F}_{J})$ is more tractable than
$L_{j,J}^{\ast}$, and thus the strategy is to construct a CI that has worst case
length over $\mathcal{F}_{j}$ bounded by
$\omega_{+}(z_{1-\alpha},\mathcal{F}_{j},\mathcal{F}_{J})$, up to a multiplicative
constant.  We refer to the rate at which
$\omega_{+}(z_{1-\alpha}, \mathcal{F}_{j}, \mathcal{F}_{J}) $ converges to $0$ as the
\textit{minimax rate of adaptation} (of $\mathcal{F}_{j}$ over
$\mathcal{F}_{J}$). If $\mathcal{F}_{j}=\mathcal{F}_{J}$, this is the minimax
rate over $\mathcal{F}_{J}$, which is the fastest rate at which the worst-case
expected length over $\mathcal{F}_{J}$ of a CI that maintains correct coverage
over the same space $\mathcal{F}_{J}$ can achieve.

\subsubsection{One-sided CIs}
\label{sec:one-sided-cis}
While our main focus is on adaptive two-sided CIs, the construction of our
adaptive CI relies heavily on the one-sided CI proposed by
\cite{armstrong2018optimal}. Hence, we briefly describe the notion of adaptivity
in the context of one-sided CIs. For one-sided CIs,
we follow \citet{armstrong2018optimal} and consider the $\beta$th quantile of
excess length as the performance criterion.  More specifically, for a one sided
lower CI, $[\hat{c},\infty)$, we denote the $\beta$th quantile of the excess
length at $f$ as $q_{\beta,f}(Lf-\hat{c})$, where $q_{\beta,f}(\cdot)$ denotes
the $\beta$th quantile function when the true regression function is $f$.  Under
this criterion, the best possible performance over $\mathcal{F}_{j}$ is
quantified by
\begin{equation*}
  \ell_{j,J}^{\ast}:=\inf_{\hat{c}:[\hat{c},\infty)\in\mathcal{I}_{\alpha,\ell}^{J}}\sup_{f\in\mathcal{F}_{j}}q_{\beta,f}(Lf-\hat{c}),
\end{equation*}
where $\mathcal{I}_{\alpha, \ell}^{J}$ denotes the set of all one-sided lower
CIs that have coverage at least $1-\alpha$ over $\mathcal{F}_{J}.$
\citet{armstrong2018optimal} showed that
$\ell_{j,J}^{\ast}=\omega(z_{1-\alpha}+z_{\beta},\mathcal{\mathcal{F}}_{J},\mathcal{\mathcal{F}}_{j})$,
where
$\omega(z_{1-\alpha}+z_{\beta},\mathcal{\mathcal{F}}_{J},\mathcal{\mathcal{F}}_{j})$
is the \textit{ordered class modulus of continuity} defined as
\begin{align*}
  &\omega(\delta,\mathcal{F}_{j},\mathcal{F}_{k}) \\
  :=&\sup\left\{
      Lf_{k}-Lf_{j}:\textstyle\sum_{i=1}^{n}\left((f_{k}(x_{i})-f_{j}(x_{i}))/\sigma(x_{i})\right)^{2}
      \leq \delta^{2},f\in\mathcal{F}_{j},f_{J}\in\mathcal{F}_{J}\right\},
\end{align*}
for any $\delta \geq 0$ and $j,k \leq J$. We refer to the optimization problem
in the definition as the ordered modulus problem. Naturally, an analogous result
holds for upper one-sided CIs so that
$u_{j,J}^{\ast}=\omega(z_{1-\alpha}+z_{\beta},\mathcal{\mathcal{F}}_{j},\mathcal{\mathcal{F}}_{J})$,
where
\begin{equation*}
  u_{j,J}^{\ast}:=\inf_{\hat{c}:(-\infty,\hat{c}]\in\mathcal{I}_{\alpha,u}^{J}}\sup_{f\in\mathcal{F}_{j}}q_{\beta,f}(\hat{c}-Lf),
\end{equation*}
with $\mathcal{I}_{\alpha, u}^{J}$ denoting the set of all one-sided upper CIs
that have coverage at least $1-\alpha$ over $\mathcal{F}_{J}.$

We say a one-sided lower CI, $[\hat{c}^{\ast},\infty)$, is adaptive if there
exists some $c>0$ that does not depend on $n$ such that
\begin{equation*}
  \sup_{f\in\mathcal{F}_{j}}q_{\beta,f}(Lf-\hat{c}^{\ast})\leq
  c\,\omega(z_{1-\alpha}+z_{\beta},\mathcal{\mathcal{F}}_{j},\mathcal{\mathcal{F}}_{J})
\end{equation*}
for all $j \leq J$, and similarly for one-sided upper CIs. 

\subsubsection{Modes of Adaptation}
\label{sec:scope-adaptation}

Note that it must be the case that
$\omega_{+}(z_{1-\alpha},\mathcal{F}_{j},\mathcal{F}_{j}) \leq
\omega_{+}(z_{1-\alpha},\mathcal{F}_{j},\mathcal{F}_{J})$ (and similarly for the
ordered moduli) because
$ \omega_{+}(z_{1-\alpha},\mathcal{F}_{j},\mathcal{F}_{j})$ takes the supremum over a
smaller set. However, if it happens to be the case that
$\omega_{+}(z_{1-\alpha},\mathcal{F}_{j},\mathcal{F}_{j})\asymp\omega_{+}(z_{1-\alpha},\mathcal{F}_{j},\mathcal{F}_{J})$,
an adaptive CI, $CI^{\ast}$, satisfies
$\sup_{f\in\mathcal{F}_{j}}\E_{f}\mu(CI^{\ast})\leq\overline{c}\,L_{j,j}^{\ast}$
for all $j\leq J$. \cite{cai2004adaptation} define such CI to be \textit{strongly
  adaptive.} This is an ideal case because we obtain $L_{j,j}^{\ast}$, up to a
multiplicative constant, which is the minimax length we could have achieved if
we ``knew'' that our true regression function lied in the smaller class
$\mathcal{F}_{j}$ (i.e., if we made a stronger assumption that the true
regression function lies in this smaller class). While adaptive CIs exist in
general, strong adaptation is possible only when
$\omega_{+}(z_{1-\alpha},\mathcal{F}_{j},\mathcal{F}_{J})\asymp\omega_{+}(z_{1-\alpha},\mathcal{F}_{j},\mathcal{F}_{j})$
for all $j \leq J$. This is not a property of a given procedure, but of the
given statistical model.

The least desirable case is when
$\omega_{+}(z_{1-\alpha},\mathcal{F}_{j},\mathcal{F}_{J})\asymp\omega_{+}(z_{1-\alpha},\mathcal{F}_{J},\mathcal{F}_{J})$,
because this leaves no scope of adaptation. An intermediate case is when
\begin{equation*}
  \omega_{+}(z_{1-\alpha},\mathcal{F}_{j},\mathcal{F}_{j})\prec\omega_{+}(z_{1-\alpha},\mathcal{F}_{j},\mathcal{F}_{J})\prec\omega_{+}(z_{1-\alpha},\mathcal{F}_{J},\mathcal{F}_{J}),
\end{equation*}
so that the minimax rate of adaptation is better than the worst-case minimax rate
over $\mathcal{F}_{J}$ but not as good as the minimax rate over
$\mathcal{F}_{j}$.\footnote{For positive sequences $\left\{ a_{n}\right\} $ and
  $\left\{ b_{n}\right\} $, we write $a_{n} \prec b_{n}$ if
  $\underset{n\to\infty}{\lim\inf}({b_{n}/}{a_{n}}) = \infty$.} That is, one can
do better than simply taking the most conservative parameter space as the true
space but not quite as good as knowing that the true function actually lies in
the smaller parameter space. Hence, the minimax adaptation rate plays an
important role in determining whether sharp adaptation is possible. In Section
\ref{sec:Asymptotic-rate-results}, we derive the minimax rates of adaptation under
the model given in Section \ref{sec:Setup}.

\subsection{Construction of Adaptive CIs}
\label{sec:constr-adapt-cis}
\citet{cai2004adaptation} provide a general method of constructing adaptive CIs
of $Lf$ under the general model \eqref{eq:npr_model}.  Here, we provide an
alternative method that is intuitive and gives smaller constants in the case of
non-nested parameter spaces.\footnote{For a given adaptive CI, $CI^{\ast}$, we
  refer to the positive number $c$ (that does not depend on $n$) such that
  $ \sup_{f\in\mathcal{F}_{j}}E\mu(CI^{\ast})\leq
  c\,\omega_{+}\left(z_{\alpha},\mathcal{F}_{j},\mathcal{F}_{J}\right), $ as the
  ``constant'' of $CI^{\ast}$.} For the nested case, the CI of
\cite{cai2004adaptation} has a bounded constant even as $J \to \infty$, which is
an attractive theoretical property. For the CI we propose, the constant will
grow with $J$ in general. In practice, however, one can only adapt to finitely
many parameter spaces due to computational constraints. The proposed procedure
gives a smaller constant than that of \cite{cai2004adaptation} even for unrealistically large values of $J$ (e.g.,
$J = 10^{10}$).

The main building block for our adaptive CI is the minimax one-sided CI proposed
by \citet{armstrong2018optimal}, which relies on the ordered modulus. We say
that $(f_{j}, f_{k}) \in \mathcal{F}_{j} \times \mathcal{F}_{k} $ is a solution
to $\omega(\delta, \mathcal{F}_{j}, \mathcal{F}_{k})$ if $(f_{j}, f_{k})$ solves
the optimization problem corresponding to
$\omega(\delta, \mathcal{F}_{j}, \mathcal{F}_{k})$.  Let
$(f_{J,{\delta}}^{*,Jj},
g_{j,{\delta}}^{*,Jj})\in\mathcal{F}_{J}\times\mathcal{F}_{j}$ be a solution to
the ordered modulus
$\omega\left({\delta},\mathcal{F}_{J},\mathcal{F}_{j}\right),$ and define the
estimator
\begin{align}
  \begin{aligned}
    \hat{L}_{\delta}^{\ell,j} = &\frac{1}{2}\, L({g_{j,\delta}^{*,Jj}
      +f_{J,\delta}^{*,Jj}}) \\
    &+ 
    \frac{\omega'\left(\delta,\mathcal{F}_{J},\mathcal{F}_{j}\right)}{\delta}\sum_{i=1}^{n}(g_{j,\delta}^{*,Jj}(x_{i})-f_{J,\delta}^{*,Jj}(x_{i}))\Bigg(\frac{y_{i}}{\sigma(x_{i})}-\frac{g_{j,\delta}^{*,Jj}(x_{i})+f_{J,\delta}^{*,Jj}(x_{i})}{2}\Bigg),
  \end{aligned}
\label{eq:lhat_lower}
\end{align}
where $\omega'(\cdot, \mathcal{F}_{J}, \mathcal{F}_{j})$ is the derivative of $\omega(\cdot, \mathcal{F}_{J}, \mathcal{F}_{j})$.  Based on this estimator, define a lower one-sided CI by subtracting the maximum
bias and an appropriately scaled normal quantile:
\begin{equation}\label{eq:opt lower CI}
  \hat{c}_{\alpha, \delta}^{\ell,j}:=  \hat{L}_{\delta}^{\ell,j}-\frac{1}{2}\omega\left(\delta,\mathcal{F}_{J},\mathcal{F}_{j}\right)+\frac{1}{2}\delta\omega'\left(\delta,\mathcal{F}_{J},\mathcal{F}_{j}\right)-z_{1-\alpha}\omega'\left(\delta,\mathcal{F}_{J},\mathcal{F}_{j}\right).
\end{equation}
The following theorem from \citet{armstrong2018optimal} shows that for a
specific choice of $\delta$, this CI is optimal in the sense that it achieves
$\ell^{\ast}_{j,J}.$
\begin{lemma}[Theorem 3.1 of \cite{armstrong2018optimal}]
  \label{thm:ArmKol}Let $\underline{\delta}=z_{\beta}+z_{1-\alpha}$. Then,
  \begin{equation*}
    \underset{f\in\mathcal{F}_{j}}{\sup}q_{f,\beta}(Lf-\hat{c}_{\alpha,
      \underline{\delta}}^{\ell,j})= \ell^{\ast}_{j,J}= \omega(\underline{\delta},\mathcal{F}_{J},\mathcal{F}_{j}).
  \end{equation*}
\end{lemma}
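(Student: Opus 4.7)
The plan is to prove the two equalities in the lemma by showing (i) the CI $[\hat c_{\alpha,\underline\delta}^{\ell,j},\infty)$ attains worst-case $\beta$-quantile of excess length exactly $\omega(\underline\delta, \mathcal{F}_J, \mathcal{F}_j)$ on $\mathcal{F}_j$ while maintaining $(1-\alpha)$-coverage on $\mathcal{F}_J$, and (ii) no valid lower CI can do better, giving the matching lower bound $\ell^*_{j,J}\ge\omega(\underline\delta, \mathcal{F}_J, \mathcal{F}_j)$.

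For (i), I first observe that $\hat L_{\underline\delta}^{\ell,j}$ is affine in the rescaled Gaussian observations $\{y_i/\sigma(x_i)\}$, and hence is $N\bigl(\E_f\hat L_{\underline\delta}^{\ell,j},\,(\omega'(\underline\delta))^2\bigr)$-distributed under any $f$; the variance computation uses the constraint $\sum_i((g^*_j-f^*_J)/\sigma)^2(x_i) = \underline\delta^2$ that binds at the modulus optimum, where I abbreviate $g^*_j = g_{j,\underline\delta}^{*,Jj}$ and $f^*_J = f_{J,\underline\delta}^{*,Jj}$. The central step is to establish the two bias bounds
\begin{equation*}
  \sup_{f\in\mathcal{F}_J}\bigl(\E_f\hat L_{\underline\delta}^{\ell,j} - Lf\bigr) \le \tfrac{1}{2}\bigl[\omega(\underline\delta) - \underline\delta\,\omega'(\underline\delta)\bigr], \qquad
  \sup_{f\in\mathcal{F}_j}\bigl(Lf - \E_f\hat L_{\underline\delta}^{\ell,j}\bigr) \le \tfrac{1}{2}\bigl[\omega(\underline\delta) - \underline\delta\,\omega'(\underline\delta)\bigr],
\end{equation*}
both of which are attained at $(f^*_J, g^*_j)$ by a direct calculation of $\E_{f^*_J}\hat L_{\underline\delta}^{\ell,j}$ and $\E_{g^*_j}\hat L_{\underline\delta}^{\ell,j}$. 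The first bound, combined with the normal distribution of $\hat L_{\underline\delta}^{\ell,j}$, gives $\p_f(Lf \ge \hat c_{\alpha,\underline\delta}^{\ell,j}) \ge 1-\alpha$ for every $f\in\mathcal{F}_J$. The second bound, plugged into the Gaussian $\beta$-quantile formula for $Lf - \hat c_{\alpha,\underline\delta}^{\ell,j}$ and using $\underline\delta = z_\beta + z_{1-\alpha}$, telescopes to $\sup_{f\in\mathcal{F}_j} q_{f,\beta}(Lf - \hat c_{\alpha,\underline\delta}^{\ell,j}) = \omega(\underline\delta)$.

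For (ii), I would reduce to a simple-vs-simple Gaussian testing problem between two least-favorable points: fix $(f_J, f_j)\in\mathcal{F}_J\times\mathcal{F}_j$ with $\sum_i((f_J - f_j)/\sigma)^2(x_i)\le\underline\delta^2$ and $Lf_j - Lf_J = \omega(\underline\delta)$. Any valid lower CI $[\hat c,\infty)$ with coverage $\ge 1-\alpha$ over $\mathcal{F}_J$ satisfies $\p_{f_J}(\hat c > Lf_J)\le\alpha$; by Neyman--Pearson applied to a Gaussian shift of noncentrality at most $\underline\delta$, the power is bounded by $\Phi(\underline\delta - z_{1-\alpha}) = \Phi(z_\beta) = \beta$, so $\p_{f_j}(\hat c > Lf_J)\le\beta$. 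Rewriting, $\p_{f_j}(Lf_j - \hat c \ge \omega(\underline\delta))\ge 1-\beta$, which forces $q_{f_j,\beta}(Lf_j - \hat c)\ge\omega(\underline\delta)$ and yields the matching lower bound.

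The hard part will be the uniform extension of the two bias bounds to all of $\mathcal{F}_J$ and $\mathcal{F}_j$: their saturation at $(f^*_J, g^*_j)$ is a direct calculation, but upgrading equality at the optimizer to an inequality over the full classes requires a perturbation argument exploiting convexity. Concretely, for any $f\in\mathcal{F}_J$ the path $f^*_J + t(f - f^*_J)$ lies in $\mathcal{F}_J$ for $t\in[0,1]$, and the first-order condition for the modulus optimum of $f^*_J$ (with $g^*_j$ fixed), combined with Cauchy--Schwarz and the envelope identity $\omega(\underline\delta) - \underline\delta\,\omega'(\underline\delta)\ge 0$, yields the first bias bound; the multiplier $\omega'(\underline\delta)/\underline\delta$ emerges as the KKT dual of the modulus constraint. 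A symmetric argument with the roles of $f^*_J$ and $g^*_j$ reversed gives the second bound.
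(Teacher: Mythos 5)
The paper does not prove this lemma itself --- it imports it verbatim as Theorem 3.1 of Armstrong and Kolesár (2018) --- and your argument is essentially that source's proof: the affine estimator built from the ordered-modulus solution, whose worst-case bias over each class is $\tfrac{1}{2}\bigl[\omega(\underline{\delta})-\underline{\delta}\,\omega'(\underline{\delta})\bigr]$ (established by exactly the convexity/path-perturbation argument you sketch, which is their Lemma A.1), yields coverage over $\mathcal{F}_{J}$ and the excess-length bound $\omega(\underline{\delta},\mathcal{F}_{J},\mathcal{F}_{j})$ over $\mathcal{F}_{j}$, while the two-point Neyman--Pearson reduction at a modulus-attaining pair gives the matching lower bound for $\ell^{\ast}_{j,J}$. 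Your proposal is correct and takes the same route as the cited proof, so there is nothing substantive to add.
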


The excess length $Lf-\hat{c}_{\alpha, \underline{\delta}}^{\ell,j}$ follows a
Gaussian distribution because it is a affine transformation of the data, which
follows a Gaussian distribution by assumption. Hence, the median and mean of the
excess length are the same. Taking $\beta=1/2$, we can replace $q_{f,\beta}$
with the expectation under $f$, which gives
\begin{equation}
  \underset{f\in\mathcal{F}_{j}}{\sup}\E_{f}\left(Lf-\hat{c}_{\alpha}^{\ell,j}\right)
  = \omega\left(z_{1-\alpha},\mathcal{F}_{J},\mathcal{F}_{j}\right),\label{eq:exp_ex_len}
\end{equation}
where we define
$\hat{c}_{\alpha}^{\ell,j}:= \hat{c}_{\alpha, z_{1-\alpha}}^{\ell,j}.$ Likewise,
we can define an optimal upper one-sided CI
$(-\infty, \hat{c}_{\alpha, \underline{\delta}}^{\ell,j}]$ such that
\begin{equation}
  \sup q_{f,\beta}(\hat{c}_{\alpha, \underline{\delta}}^{u,j}-Lf) =
  u^{\ast}_{j,J} = \omega(\underline{\delta},\mathcal{F}_{j},\mathcal{F}_{J})\label{eq:one-sd-optim-up},
\end{equation}
where the precise definition of $\hat{c}_{\alpha, \underline{\delta}}^{\ell,j}$
is given in Appendix \ref{sec:defint-optim-upper}. Similarly, let
$\hat{c}_{\alpha}^{u,j}$ denote the upper counterpart of
$\hat{c}_{\alpha}^{\ell,j}.$

Using the optimal one-sided CIs, we first show how a naive Bonferroni procedure
leads to a two-sided adaptive CI. We then provide a method that improves upon
this naive Bonferroni CI by taking into account the correlation among the
CIs. The naive Bonferroni CI is defined as
\begin{equation}\label{eq:naive_Bon_CI}
  CI_{\alpha}^{Bon,J}:=\cap_{j=1}^{J}[\hat{c}_{\alpha/2J}^{\ell,j},\hat{c}_{\alpha/2J}^{u,j}].
\end{equation}
This has coverage at least $1-\alpha$ over $\mathcal{F}_{J}$ because each
$[\hat{c}_{\alpha/2J}^{\ell,j},\hat{c}_{\alpha/2J}^{u,j}]$ has coverage
$1 - \alpha/J$ over $\mathcal{F}_{J}$ and $CI_{\alpha}^{Bon,J}$ is simply the
intersection of such CIs. The following theorem shows that this CI is indeed
adaptive.
\begin{theorem}
  \label{thm:BonfCI} For any $j=1,\dots,J$, we have
  \begin{equation}
    \sup_{f\in\mathcal{F}_{j}}\E \mu(CI_{\alpha}^{Bon,\mathcal{J}})\leq\frac{2z_{1-\frac{\alpha}{2J}}}{z_{1-\frac{\alpha}{2}}}\,\omega_{+}(z_{1-\frac{\alpha}{2}},\mathcal{F}_{j},\mathcal{F}_{J}).\label{eq:BonBound}
  \end{equation}
\end{theorem}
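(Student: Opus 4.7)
The plan is to bound the expected length of the intersection by that of any single constituent two-sided CI, and then apply the one-sided optimality of Lemma~\ref{thm:ArmKol} together with the concavity of the between-class modulus.

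Fix $j\le J$ and $f\in\mathcal{F}_j$. Since $CI_\alpha^{Bon,J}\subseteq[\hat{c}^{\ell,j}_{\alpha/(2J)},\hat{c}^{u,j}_{\alpha/(2J)}]$, monotonicity of Lebesgue measure gives
\[
\mu(CI_\alpha^{Bon,J})\le\hat{c}^{u,j}_{\alpha/(2J)}-\hat{c}^{\ell,j}_{\alpha/(2J)}=(\hat{c}^{u,j}_{\alpha/(2J)}-Lf)+(Lf-\hat{c}^{\ell,j}_{\alpha/(2J)})
\]
on the event that the interval is nonempty (the positive-part correction is absorbed via subadditivity $(a+b)_+\le a_+ + b_+$, and coverage at level $\alpha/(2J)$ forces each of the Gaussian summands to be nonnegative with high probability). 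Taking $\E_f$ and applying~\eqref{eq:exp_ex_len} at nominal level $\alpha/(2J)$, together with its upper-CI analogue derived from~\eqref{eq:one-sd-optim-up}, the two expected excess lengths are bounded uniformly over $f\in\mathcal{F}_j$ by $\omega(z_{1-\alpha/(2J)},\mathcal{F}_J,\mathcal{F}_j)$ and $\omega(z_{1-\alpha/(2J)},\mathcal{F}_j,\mathcal{F}_J)$, respectively.

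Next I would convert the sum of ordered moduli into a single between-class modulus using $\omega_+(\delta,\mathcal{F}_j,\mathcal{F}_J)=\max\{\omega(\delta,\mathcal{F}_j,\mathcal{F}_J),\omega(\delta,\mathcal{F}_J,\mathcal{F}_j)\}$, giving an upper bound of $2\omega_+(z_{1-\alpha/(2J)},\mathcal{F}_j,\mathcal{F}_J)$. To rescale the argument from $z_{1-\alpha/(2J)}$ down to $z_{1-\alpha/2}$, I would invoke concavity of the modulus: convexity of $\mathcal{F}_j$ and $\mathcal{F}_J$ implies that $\delta\mapsto\omega_+(\delta,\mathcal{F}_j,\mathcal{F}_J)$ is concave on $[0,\infty)$ with $\omega_+(0,\cdot,\cdot)=0$, so $\omega_+(\delta)/\delta$ is nonincreasing in $\delta>0$, and hence with $c=z_{1-\alpha/(2J)}/z_{1-\alpha/2}\ge 1$,
\[
\omega_+(z_{1-\alpha/(2J)},\mathcal{F}_j,\mathcal{F}_J)\le\frac{z_{1-\alpha/(2J)}}{z_{1-\alpha/2}}\,\omega_+(z_{1-\alpha/2},\mathcal{F}_j,\mathcal{F}_J).
\]
Chaining the inequalities delivers~\eqref{eq:BonBound}.

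The main substantive step is the concavity/scaling of $\omega_+$, which rests on convexity of the parameter spaces assumed throughout Section~\ref{sec:adapt-conf-interv}. The only technical subtlety is the positive-part bookkeeping in the first display; this is benign because coverage at level $\alpha/(2J)$ ensures that both Gaussian one-sided excess lengths are nonnegative with overwhelming probability. Everything else---linearity of expectation, the Bonferroni inclusion of the intersection into each constituent, and the identification of expected excess length with the ordered modulus via Gaussianity at $\beta=1/2$---is routine.
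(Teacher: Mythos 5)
Your proposal is correct and follows essentially the same route as the paper's own proof: bound $\mu(CI_{\alpha}^{Bon,J})$ by the length of the $j$th constituent interval, split that length into the two one-sided excess lengths, bound their expectations by the ordered moduli at level $\alpha/(2J)$ via \eqref{eq:exp_ex_len} and its upper analogue, combine them into $2\omega_{+}(z_{1-\frac{\alpha}{2J}},\mathcal{F}_{j},\mathcal{F}_{J})$, and rescale using concavity of the modulus (so that $\omega_{+}(\delta)/\delta$ is nonincreasing). The empty-intersection/positive-part issue you flag is likewise passed over in the paper, which simply writes the length as $\min_{j}\hat{c}^{u,j}-\max_{j}\hat{c}^{\ell,j}$, so your treatment is no less rigorous than the original.
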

The constant $2z_{1-\frac{\alpha}{2J}}/z_{1-\frac{\alpha}{2}}$ increases with
the number of parameter spaces $J$.\footnote{The constant,
  $z_{1-\frac{\alpha}{2J}}/z_{1-\frac{\alpha}{2}}$, grows with $J$ at the rate
  $(\log J)^{1/2}$. This is the same rate that \cite{cai2004adaptation} find in
  their analysis of the case with non-nested parameter spaces. Their constant is
  at least eight times greater than what we provide here, but does not require
  that the largest space in consideration is convex.} On the other hand, the
constant given in \citet{cai2004adaptation} is 16 and thus does not depend on
the number of parameter spaces. However, we note that
$2z_{1-\frac{\alpha}{2J}}/z_{1-\frac{\alpha}{2}}$ is not too large, in fact
smaller than $16$, for reasonable specifications of $J.$ For example, when
$\alpha=0.05$ and $J=50$, we get
$2z_{1-\frac{\alpha}{2J}}/z_{1-\frac{\alpha}{2}}\approx3.36$, which is
considerably smaller than the constant given in \citet{cai2004adaptation}. Even
for unrealistically large $J$ such as $J=10^{10}$, we have
$2z_{1-\frac{\alpha}{2J}}/z_{1-\frac{\alpha}{2}}<8$, which is still less than
half of the constant given by \citet{cai2004adaptation}. Simulation results
given in Section \ref{sec:Simulation} confirm that not only the upper bound, but
also the actual length itself is often much shorter for our CI.
\begin{remark}
  Suppose one is interested in constructing the one-sided CI in an adaptive
  way. Note that Lemma \ref{thm:ArmKol} implies that any one-sided CI
  $[\hat{c}_{\alpha}^{\ell,J},\infty)$ with coverage probability $1-\alpha$
  should satisfy
  \[
    \underset{f\in\mathcal{F}_{j}}{\sup}\E (Lf-\hat{c}_{\alpha}^{\ell,J})\geq\omega\left(z_{1-\alpha},\mathcal{F}_{J},\mathcal{F}_{j}\right).
  \]
  Define $\hat{c}_{\alpha}^{\ell,J}=\max _{j}\hat{c}_{\alpha/J}^{\ell,j}.$ Then,
  by an analogous argument to Theorem \ref{thm:BonfCI}, we have
  \[
    \underset{f\in\mathcal{F}_{j}}{\sup}\E (Lf-\hat{c}_{\alpha}^{\ell,J})\leq\frac{z_{1-\frac{\alpha}{J}}}{z_{1-\alpha}}\omega\left(z_{1-\alpha},\mathcal{F}_{J},\mathcal{F}_{j}\right).
  \]
  Therefore, $\left[\hat{c}_{\alpha}^{\ell,J},\infty\right)$ is an adaptive
  one-sided CI in a similar sense with the two-sided case.
\end{remark}

The naive CI given in \eqref{eq:naive_Bon_CI} does not take into account the
possible correlation among the CIs that we take the intersection of. However, if
parameter spaces are ``close'' to each other, the corresponding CIs will be
correlated, implying that there is room for improvement over the Bonferroni
procedure. Consider the CIs of the form
$CI^{\tau,\mathcal{J}}=\cap_{j=1}^{J}[\hat{c}_{\tau}^{\ell,j},\hat{c}_{\tau}^{u,j}]$. If
we take $\tau = \alpha/(2J)$, this is precisely the CI given in
\eqref{eq:naive_Bon_CI}. The CI that gives the smallest constant among CIs of
such forms is $CI^{\tau^{\ast},\mathcal{J}},$ where $\tau^{\ast}$ is the largest
possible $\tau$ such that $CI^{\tau,\mathcal{J}}$ has correct coverage over
$\mathcal{F}_{J}$:
\begin{equation*}
  \tau^{\ast}:= \sup_{\tau} \tau \,\, \text{ s.t. } \inf_{f \in
    \mathcal{F}_{J}}\p _{f}(Lf \in CI^{\tau,\mathcal{J}})  \geq 1- \alpha.
\end{equation*}
We know that $\tau = \alpha/(2J)$ satisfies the constraint, and also that any
$ \tau > \alpha$ does not because then $[\hat{c}_{\tau}^{\ell,j}, \infty)$ will
have coverage probability $1-\tau < 1 - \alpha$. Hence, we can restrict $\tau$
to lie in $[\alpha/(2J), \alpha]$.

However, the coverage probability
$ \inf_{f \in \mathcal{F}_{J}}\p _{f}(Lf \in CI^{\tau,\mathcal{J}})$ is unknown in
general, rendering $CI^{\tau^{\ast},\mathcal{J}}$ infeasible. Instead, we
replace this coverage probability with a lower bound that we can calculate
either analytically or via simulation. Then, we take $\tau^{\ast}$ as the largest value that makes this lower bound at least $1-\alpha$. As we show later, using $\tau^{\ast}$ rather than $\alpha/(2J)$ can only make the resulting CI shorter.

Let $(V(\tau)', W(\tau)')'$ be a centered Gaussian random vector with unit
variance. The covariance terms for
$V(\tau)=\left(V_{1}(\tau),...,V_{J}(\tau)\right)' $ is given by
  \[
    \text{Cov}\left(V_{j}(\tau),V_{\ell}(\tau)\right)=\frac{1}{z_{1-\tau}^{2}}\sum_{i=1}^{n}\big(g_{j,z_{1-\tau}}^{*,
        Jj}(x_{i})-f_{J,z_{1-\tau}}^{*,
        J j}(x_{i})\big)\big(g_{\ell,z_{1-\tau}}^{*,
        J\ell}(x_{i})-f_{J,z_{1-\tau}}^{*, J \ell}(x_{i})\big).
  \]
  Likewise, the covariance terms for
  $ W(\tau)=\big(W_{1}(\tau),...,W_{J}(\tau)\big)'$ is given by
  \[
    \text{Cov}\big(W_{j}(\tau),W_{\ell}(\tau)\big)=\frac{1}{z_{1-\tau}^{2}}\sum_{i=1}^{n}\big(g_{j,z_{1-\tau}}^{*,
        jJ}(x_{i})-f_{J,z_{1-\tau}}^{*,
        jJ}(x_{i})\big)\big(g_{\ell,z_{1-\tau}}^{*, \ell
        J}(x_{i})-f_{J,z_{1-\tau}}^{*, \ell J}(x_{i})\big).
  \]
  Finally, the covariance terms across $V(\tau)$ are $W(\tau)$ given as
  \[
    \text{Cov}\big(V_{j}(\tau),W_{\ell}(\tau)\big)=\frac{1}{z_{1-\tau}^{2}}\sum_{i=1}^{n}\big(g_{j,z_{1-\tau}}^{*,
        Jj}(x_{i})-f_{J,z_{1-\tau}}^{*,
        Jj}(x_{i})\big)\big(g_{\ell,z_{1-\tau}}^{*,
        \ell J}(x_{i})-f_{J,z_{1-\tau}}^{*, \ell J}(x_{i})\big).
  \]
  This Gaussian random vector can be used to tune the critical value, as the
  following lemma implies.
  
\begin{lemma}
  \label{lem:Bonf-improv}
  Let $\tau^{*}\in\left[\frac{\alpha}{2J},\alpha\right]$ to be the largest value
  of $\tau$ such that
  \begin{equation}
     \p \left(\max\left\{ V\left(\tau\right)',W\left(\tau\right)'\right\}
       >z_{1-\tau}\right) \leq \alpha\label{eq:VWmaxCond}.
  \end{equation}
  Then, we have
  $\sup_{f\in\mathcal{F}_{J}}\p \left(f(0)\notin
    CI_{\tau^{*}}^{\mathcal{J}}\right)\leq\alpha$.
\end{lemma}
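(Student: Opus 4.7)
The plan is to bound $\sup_{f\in\mathcal{F}_{J}}\mathbf{P}_{f}(f(0)\notin CI^{\mathcal{J}}_{\tau^{*}})$ by the multivariate Gaussian tail in \eqref{eq:VWmaxCond} evaluated at $\tau=\tau^{*}$, and then invoke the defining inequality of $\tau^{*}$. Noncoverage is the union over $j\leq J$ of the one-sided events $\{\hat{c}^{\ell,j}_{\tau^{*}}>Lf\}$ and $\{\hat{c}^{u,j}_{\tau^{*}}<Lf\}$, so the task reduces to controlling these $2J$ events jointly with a handle on $f$ that is uniform over $\mathcal{F}_{J}$.

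The first step is to identify the joint Gaussian law of the normalized estimator errors. Because $\hat{L}^{\ell,j}_{z_{1-\tau}}$ in \eqref{eq:lhat_lower} is affine in $y$, the centered quantity $\hat{L}^{\ell,j}_{z_{1-\tau}}-\mathbf{E}_{f}\hat{L}^{\ell,j}_{z_{1-\tau}}$ is Gaussian with variance $(\omega'(z_{1-\tau},\mathcal{F}_{J},\mathcal{F}_{j}))^{2}$, using that the modulus constraint $\sum_{i}((g^{*,Jj}_{j,\delta}(x_{i})-f^{*,Jj}_{J,\delta}(x_{i}))/\sigma(x_{i}))^{2}=\delta^{2}$ binds at the optimum. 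Dividing by this standard deviation defines the standard Gaussian $V_{j}(\tau)$, and computing the inner product of the weight vectors in \eqref{eq:lhat_lower} reproduces exactly the formula for $\operatorname{Cov}(V_{j}(\tau),V_{\ell}(\tau))$ given before the lemma. The analogous construction applied to the upper CI produces $W(\tau)$ with the claimed covariance structure and cross-covariances with $V(\tau)$.

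The second step is a uniform deterministic bound on each one-sided miscoverage. By the Donoho-style bias argument underlying Lemma \ref{thm:ArmKol}, for any $f\in\mathcal{F}_{J}$ the bias satisfies $\mathbf{E}_{f}\hat{L}^{\ell,j}_{z_{1-\tau}}-Lf\leq\tfrac{1}{2}\omega(z_{1-\tau},\mathcal{F}_{J},\mathcal{F}_{j})-\tfrac{1}{2}z_{1-\tau}\omega'(z_{1-\tau},\mathcal{F}_{J},\mathcal{F}_{j})$, which is precisely the worst-case bias absorbed by the shift in \eqref{eq:opt lower CI}. Combining this with the Gaussian representation gives, uniformly in $f\in\mathcal{F}_{J}$,
\begin{equation*}
\hat{c}^{\ell,j}_{\tau}-Lf \;\leq\; \omega'(z_{1-\tau},\mathcal{F}_{J},\mathcal{F}_{j})\bigl(V_{j}(\tau)-z_{1-\tau}\bigr),
\end{equation*}
and analogously $Lf-\hat{c}^{u,j}_{\tau}\leq\omega'(z_{1-\tau},\mathcal{F}_{j},\mathcal{F}_{J})(W_{j}(\tau)-z_{1-\tau})$. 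Since $\omega'>0$, noncoverage implies $\max\{V(\tau^{*})',W(\tau^{*})'\}>z_{1-\tau^{*}}$, and this set containment holds regardless of $f$. Taking $\sup_{f\in\mathcal{F}_{J}}$ of the probability and applying \eqref{eq:VWmaxCond} at $\tau^{*}$ yields the conclusion.

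The main obstacle will be the careful bookkeeping that identifies the joint law of the normalized estimator errors $(\hat{L}^{\ell,j}_{z_{1-\tau}}-\mathbf{E}_{f}\hat{L}^{\ell,j}_{z_{1-\tau}})/\omega'(z_{1-\tau},\mathcal{F}_{J},\mathcal{F}_{j})$ across $j$, together with their upper-CI counterparts, as the Gaussian vector $(V(\tau)',W(\tau)')$ prescribed in the lemma; this is pure (but finicky) second-moment algebra based on the least-favorable pairs. Once these covariances are matched and the $f$-independent max-bias bound from Donoho/Armstrong-Kolesár is invoked, the rest is the deterministic set containment lifted to a probability bound that is automatically uniform over $\mathcal{F}_{J}$.
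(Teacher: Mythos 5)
Your proposal is correct and follows essentially the same route as the paper: decompose noncoverage into the $2J$ one-sided events, use the affine-Gaussian structure of $\hat{L}_{\delta}^{\ell,j}$, $\hat{L}_{\delta}^{u,j}$ together with the $f$-uniform worst-case bias bound from Armstrong--Koles\'ar to reduce everything to the fixed mean-zero Gaussian vector $(V(\tau)',W(\tau)')'$, and then invoke the defining inequality of $\tau^{*}$. The only cosmetic difference is that the paper recenters the noncoverage statistics as $\widetilde{V}_{j}(\tau),\widetilde{W}_{j}(\tau)$ (which have the same covariance but nonpositive means) and appeals to monotonicity of the max's distribution in the means, whereas you use the equivalent pointwise containment $\{\hat{c}^{\ell,j}_{\tau}>Lf\}\subseteq\{V_{j}(\tau)>z_{1-\tau}\}$; both are valid.
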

Such a $\tau^{\ast}$ always exists because the inequality \eqref{eq:VWmaxCond}
holds with $\tau = \alpha/(2J)$ due to the union bound. A solution $\tau^{*}$
can be found via numerical simulation. By construction, its length will be also
bounded by (\ref{eq:BonBound}). In Section \ref{sec:constr-adapt-ci}, we show
that as $n \to \infty$ the distribution of $(V(\tau)', W(\tau)')'$ does not
depend on $\tau$, under our setting of $Lf = f(0)$ with $f$ belonging to a
H\"older class. Hence, finding $\tau^{*}$ boils down to simply finding the
$1-\alpha$ quantile of the maximum of a Gaussian vector in this case.

\section{Adaptive Inference for $f(0)$}
\label{sec:adapt-infer-f0}
In this section, we provide an adaptive inference procedure for $f(0)$. To
construct the adaptive CI introduced in Section \ref{sec:adapt-conf-interv}, we
first solve the corresponding modulus problem. By using this solution to the
modulus problem, we derive the minimax rate of adaptation. Finally, we provide
a CI that obtain this rate, using the method described in Section
\ref{sec:adapt-conf-interv}.

\subsection{Solution to the Modulus Problem}
\label{sec:solut-modul-probl}

Let
$\Lambda_{+,\mathcal{V}}(\gamma_{j},C_{j})\subset\Lambda_{+,\mathcal{V}}(\gamma_{J},C_{J})$
with $\gamma_{j}\geq\gamma_{J}$ and $C_{j}\leq
C_{J}$. 
To construct the adaptive CI, we first calculate the ordered moduli,
$\omega\left(\delta,\Lambda_{+,\mathcal{V}}\left(\gamma_{j},C_{j}\right),\Lambda_{+,\mathcal{V}}\left(\gamma_{J},C_{J}\right)\right)$
and
$\omega\left(\delta,\Lambda_{+,\mathcal{V}}\left(\gamma_{J},C_{J}\right),\Lambda_{+,\mathcal{V}}\left(\gamma_{j},C_{j}\right)\right),$
for each $j=1,\dots,J.$ For notational simplicity, we consider the case with
$J=2$ and solve
$\omega_{+}\left(\delta,\Lambda_{+,\mathcal{V}}\left(\gamma_{1},C_{1}\right),\Lambda_{+,\mathcal{V}}\left(\gamma_{2},C_{2}\right)\right)$, from which the general solution follows immediately.

Recall the definition of the ordered modulus of continuity
\begin{align*}
  &\sup
  f_{2}(0)-f_{1}(0) \\
  \text{s.t. }
  &\sum_{i=1}^{n}\left(\left(f_{2}(x_{i})-f_{1}(x_{i})\right)/\sigma(x_{i})\right)^{2}\leq\delta^{2}, \,\,
  f_{j}\in\Lambda_{+,\mathcal{V}}\left(\gamma_{j},C_{j}\right) \text{ for }j = 1,2,
\end{align*}
with the maximized value denoted by
$\omega\left(\delta,\Lambda_{+,\mathcal{V}}\left(\gamma_{1},C_{1}\right),\Lambda_{+,\mathcal{V}}\left(\gamma_{2},C_{2}\right)\right).$
It is convenient to solve the inverse modulus problem instead, which is defined
as
\begin{align}
  \begin{aligned}
    \inf \,
    &\sum_{i=1}^{n}\left(\left(f_{2}(x_{i})-f_{1}(x_{i})\right)/\sigma(x_{i})\right)^{2} \\
    \text{s.t. }& f_{2}(0)-f_{1}(0)=b,\,\,
    f_{j}\in\Lambda_{+,\mathcal{V}}\left(\gamma_{j},C_{j}\right) \text{ for }j =
    1,2,
  \end{aligned}\label{eq:InvMod}
\end{align}
for $b>0$, with the square root of the maximized value denoted by the inverse
(ordered) modulus
$
\omega^{-1}\left(b,\Lambda_{+,\mathcal{V}}\left(\gamma_{1},C_{1}\right),\Lambda_{+,\mathcal{V}}\left(\gamma_{2},C_{2}\right)\right).$
We provide a closed form solution for the this inverse problem, from which we can recover the solution to the
original problem by finding $b$ such that
$\omega^{-1}\left(b,\Lambda_{+,\mathcal{V}}\left(\gamma_{1},C_{1}\right),\Lambda_{+,\mathcal{V}}\left(\gamma_{2},C_{2}\right)\right)=\delta$.
Note that this is simply a search problem on the positive real line.

To characterize the solution to \eqref{eq:InvMod}, we show two simple lemmas
about the properties of the class
$\Lambda_{+,\mathcal{V}}\left(\gamma,C\right)$.  For
$z=(z_{1},\dots,z_{k})\in\mathbb{R}^{k}$, define

\[
  \left(z\right)_{\mathcal{V}+}=\begin{cases}
    \max\left\{ z_{i},0\right\}  & i\in\mathcal{V}\\
    z_{i} & i\notin\mathcal{V}
  \end{cases}
\]
and $\left(z\right)_{\mathcal{V}-}=\left(-z\right)_{\mathcal{V}+}.$
\begin{lemma}
  \label{lem:h_plus} Suppose Assumption \ref{assu:norm} holds, and let
  $\gamma\in[0,1]$ and $C>0$. Define
  \begin{equation*}
    h_{+}(x)  =  C\left\lVert  (x)_{\mathcal{V}+}\right\rVert ^{\gamma}\quad\text{and}\quad
    h_{-}(x)  =  -C\left\lVert (x)_{\mathcal{V}-}\right\rVert ^{\gamma}.
  \end{equation*}
  Then, $h_{+},\ h_{-}\in\Lambda_{+,\mathcal{V}}\left(\gamma,C\right)$.
\end{lemma}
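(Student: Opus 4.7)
The plan is to verify the two defining properties of $\Lambda_{+,\mathcal{V}}(\gamma, C)$ separately: coordinate-wise monotonicity in the coordinates indexed by $\mathcal{V}$, and H\"older continuity with constants $(\gamma, C)$. I will treat $h_{+}$ first, and then note that $h_{-}$ follows by an essentially identical argument after observing that $(x)_{\mathcal{V}-} = (-x)_{\mathcal{V}+}$.

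For monotonicity, suppose $x_{j} \geq z_{j}$ for every $j \in \mathcal{V}$ and $x_{j} = z_{j}$ for every $j \notin \mathcal{V}$. I want to compare $(x)_{\mathcal{V}+}$ and $(z)_{\mathcal{V}+}$ coordinate by coordinate. For $j \in \mathcal{V}$, both $\max\{x_{j},0\}$ and $\max\{z_{j},0\}$ are nonnegative, and the first is at least the second; for $j \notin \mathcal{V}$, the two coordinates coincide. Hence $|(x)_{\mathcal{V}+,j}| \geq |(z)_{\mathcal{V}+,j}|$ for every $j$. By Assumption \ref{assu:norm}, the norm is nondecreasing in the magnitude of each coordinate, so $\|(x)_{\mathcal{V}+}\| \geq \|(z)_{\mathcal{V}+}\|$, and raising to the power $\gamma$ gives $h_{+}(x) \geq h_{+}(z)$.

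For H\"older continuity, I will chain three elementary inequalities. First, since $t \mapsto t^{\gamma}$ is concave on $[0,\infty)$ for $\gamma \in [0,1]$, one has the standard bound $|a^{\gamma} - b^{\gamma}| \leq |a-b|^{\gamma}$ for nonnegative $a,b$; applied to $a = \|(x)_{\mathcal{V}+}\|$ and $b = \|(z)_{\mathcal{V}+}\|$, this yields $|h_{+}(x) - h_{+}(z)| \leq C \bigl| \|(x)_{\mathcal{V}+}\| - \|(z)_{\mathcal{V}+}\| \bigr|^{\gamma}$. Second, the reverse triangle inequality gives $\bigl| \|(x)_{\mathcal{V}+}\| - \|(z)_{\mathcal{V}+}\| \bigr| \leq \|(x)_{\mathcal{V}+} - (z)_{\mathcal{V}+}\|$. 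Third, I compare this difference to $x - z$ coordinatewise: for $j \in \mathcal{V}$, the $1$-Lipschitz property of $t \mapsto \max\{t,0\}$ gives $|\max\{x_{j},0\}-\max\{z_{j},0\}| \leq |x_{j}-z_{j}|$, and for $j \notin \mathcal{V}$ the two sides are equal. Invoking Assumption \ref{assu:norm} once more to compare norms via coordinatewise magnitudes, I get $\|(x)_{\mathcal{V}+} - (z)_{\mathcal{V}+}\| \leq \|x-z\|$. Chaining these yields $|h_{+}(x)-h_{+}(z)| \leq C\|x-z\|^{\gamma}$, as required.

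For $h_{-}$, note that $h_{-}(x) = -C\|(-x)_{\mathcal{V}+}\|^{\gamma}$. The flip $x \mapsto -x$ swaps the monotonicity direction, and the extra minus sign in front of $C\|\cdot\|^{\gamma}$ swaps it back, so the monotonicity argument goes through verbatim; H\"older continuity is preserved under $x \mapsto -x$ as well. The only step that warrants care is the invocation of Assumption \ref{assu:norm}, which is exactly where the multivariate structure enters and which I expect to be the main subtlety---in particular, one must note that the coordinates of $(x)_{\mathcal{V}+}$ for $j \in \mathcal{V}$ are already nonnegative, so comparing magnitudes reduces to comparing values. All other steps are standard one-variable inequalities.
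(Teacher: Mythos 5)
Your proof is correct and follows essentially the same route as the paper's: establish H\"older continuity via the scalar inequality $\lvert a^{\gamma}-b^{\gamma}\rvert \le \lvert a-b\rvert^{\gamma}$ together with the fact that the map $x \mapsto (x)_{\mathcal{V}+}$ is a contraction for the norm under Assumption \ref{assu:norm}, then get monotonicity from coordinatewise magnitude domination, and handle $h_{-}$ via $h_{-}(x) = -h_{+}(-x)$. The paper merely packages the first two inequalities as ``$C\lVert x\rVert^{\gamma}$ is H\"older'' before composing, which is the same argument in a slightly different order.
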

The following lemma asserts that the class of functions we consider is closed
under the maximum operator.
\begin{lemma}
  \label{lem:holder_max_holder}Suppose
  $h_{1},h_{2}\in\Lambda_{+,\mathcal{V}}\left(\gamma,C\right)$.  Then,
  $\max \left\{ h_{1},h_{2}\right\}
  \in\Lambda_{+,\mathcal{V}}\left(\gamma,C\right)$.
\end{lemma}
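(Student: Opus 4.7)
The plan is to verify the two defining properties of $\Lambda_{+,\mathcal{V}}(\gamma,C)$ separately for $g := \max\{h_{1},h_{2}\}$: (i) the H\"older bound with constants $(\gamma,C)$, and (ii) coordinate-wise monotonicity in the directions indexed by $\mathcal{V}$. Both steps are essentially routine, so I will keep the argument light and just indicate the key inequalities.

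For H\"older continuity, I would fix $x,z \in \mathcal{X}$ and argue by a symmetric WLOG reduction. Without loss of generality assume $g(x) \geq g(z)$, and further that the maximum at $x$ is attained by $h_{1}$, so that $g(x) = h_{1}(x)$. Then, since $g(z) \geq h_{1}(z)$, I get
\begin{equation*}
    0 \leq g(x) - g(z) = h_{1}(x) - g(z) \leq h_{1}(x) - h_{1}(z) \leq C\lVert x - z\rVert^{\gamma},
\end{equation*}
using the H\"older property of $h_{1}$ in the last step. Since the roles of $x,z$ and of $h_{1},h_{2}$ are symmetric, this gives $\lvert g(x) - g(z)\rvert \leq C\lVert x - z\rVert^{\gamma}$ in all cases, so $g \in \Lambda(\gamma, C)$.

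For the monotonicity part, I would just unfold the definition. Take $x,z$ with $x_{j} \geq z_{j}$ for all $j \in \mathcal{V}$ and $x_{j} = z_{j}$ for $j \notin \mathcal{V}$. Since $h_{1},h_{2} \in \Lambda_{+,\mathcal{V}}(\gamma,C)$, we have both $h_{1}(x) \geq h_{1}(z)$ and $h_{2}(x) \geq h_{2}(z)$, and the maximum preserves pointwise inequalities, giving $g(x) \geq g(z)$. Combining with the H\"older bound from the previous paragraph yields $g \in \Lambda_{+,\mathcal{V}}(\gamma,C)$, which is the claim. There is no real obstacle here; the only thing to be careful about is the symmetric case-split in the H\"older estimate, which is handled by the WLOG reduction above.
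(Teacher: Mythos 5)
Your proof is correct and follows essentially the same route as the paper: monotonicity is immediate since the pointwise maximum preserves the coordinate-wise inequalities, and the H\"older bound is obtained by reducing to the function attaining the maximum at $x$. Your use of $g(z)\geq h_{1}(z)$ to get $g(x)-g(z)\leq h_{1}(x)-h_{1}(z)$ in one line is in fact a slightly cleaner packaging of the paper's explicit case split on which of $h_{1},h_{2}$ attains the maximum at $z$.
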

The next lemma can be used to establish the solutions to the problem
(\ref{eq:InvMod}). This is a generalization of Proposition 4.1 of
\citet{beliakov2005monotonicity}, which gives the same result for the special
case of $\gamma=1$.
\begin{lemma}
  \label{lem:InvModMax}Given $f_{0}\in\mathbb{R}$ and $0<\gamma\leq1$, define
  \[
    \Lambda_{+,\mathcal{V}}^{f_{0}}\left(\gamma,C\right)=\left\{
      f\in\Lambda_{+,\mathcal{V}}\left(\gamma,C\right):\ f(0)=f_{0}\right\} .
  \]
  Then, for any $x\in\mathbb{R}^{k}$, we have
  \begin{eqnarray*}
    \underset{f\in\Lambda_{+,\mathcal{V}}^{f_{0}}\left(\gamma,C\right)}{\max }f(x) & = & f_{0}+C\left\lVert (x)_{\mathcal{V}+}\right\rVert ^{\gamma}\\
    \underset{f\in\Lambda_{+,\mathcal{V}}^{f_{0}}\left(\gamma,C\right)}{\min }f(x) & = & f_{0}-C\left\lVert (x)_{\mathcal{V}-}\right\rVert ^{\gamma}.
  \end{eqnarray*}
\end{lemma}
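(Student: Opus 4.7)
The plan is to prove the max statement (the min statement then follows by a symmetric argument) by establishing matching upper and lower bounds. For the lower bound (attainability), I would take the candidate function $f^{*}(z) = f_{0} + h_{+}(z) = f_{0} + C\lVert (z)_{\mathcal{V}+}\rVert^{\gamma}$. By Lemma~\ref{lem:h_plus}, $h_{+} \in \Lambda_{+,\mathcal{V}}(\gamma,C)$, and shifting by a constant preserves both Hölder continuity and coordinate-wise monotonicity, so $f^{*} \in \Lambda_{+,\mathcal{V}}(\gamma,C)$. Since $(0)_{\mathcal{V}+} = 0$, we have $f^{*}(0) = f_{0}$, and $f^{*}(x) = f_{0} + C\lVert (x)_{\mathcal{V}+}\rVert^{\gamma}$ achieves the claimed value.

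For the upper bound, I would take an arbitrary $f \in \Lambda_{+,\mathcal{V}}^{f_{0}}(\gamma,C)$ and introduce the auxiliary point $\hat{x} := (x)_{\mathcal{V}+}$. The point of this construction is twofold: first, $\hat{x}_{j} = \max\{x_{j},0\} \geq x_{j}$ for $j \in \mathcal{V}$ and $\hat{x}_{j} = x_{j}$ for $j \notin \mathcal{V}$, so by the monotonicity property we get $f(x) \leq f(\hat{x})$; second, applying Hölder continuity between $\hat{x}$ and $0$ yields
\begin{equation*}
  f(\hat{x}) - f(0) \leq C\lVert \hat{x} - 0\rVert^{\gamma} = C\lVert (x)_{\mathcal{V}+}\rVert^{\gamma}.
\end{equation*}
Combining these with $f(0) = f_{0}$ gives $f(x) \leq f_{0} + C\lVert (x)_{\mathcal{V}+}\rVert^{\gamma}$, matching the lower bound.

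For the min statement, I would mirror the argument with $\check{x}$ defined by $\check{x}_{j} = \min\{x_{j},0\}$ for $j \in \mathcal{V}$ and $\check{x}_{j} = x_{j}$ for $j \notin \mathcal{V}$. Monotonicity now gives $f(\check{x}) \leq f(x)$ while Hölder continuity yields $f(\check{x}) \geq f_{0} - C\lVert \check{x}\rVert^{\gamma}$. To conclude I must identify $\lVert \check{x}\rVert$ with $\lVert (x)_{\mathcal{V}-}\rVert$. Since $\check{x} = -(x)_{\mathcal{V}-}$ componentwise, we have $\lvert \check{x}_{j}\rvert = \lvert (x)_{\mathcal{V}-,j}\rvert$ for every $j$; then Assumption~\ref{assu:norm} (the norm is nondecreasing in each $\lvert z_{j}\rvert$, applied in both directions) forces $\lVert \check{x}\rVert = \lVert (x)_{\mathcal{V}-}\rVert$. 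Attainability is obtained by $f_{0} + h_{-}$, again using Lemma~\ref{lem:h_plus}.

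The only subtle step is the norm-equality in the min case, because the definition of $(x)_{\mathcal{V}-}$ fixes the $j \notin \mathcal{V}$ coordinates to $-x_{j}$ rather than to $x_{j}$; this is exactly where Assumption~\ref{assu:norm} is needed (an $\ell_{p}$-type norm would do, but a general norm need not satisfy $\lVert z\rVert = \lVert (\pm z_{1},\dots,\pm z_{k})\rVert$). Apart from that, the argument is essentially an interpolation between monotonicity (used to move from $x$ to a favorable anchor point on the ``positive/negative cone'') and Hölder continuity (used to bound the value at the anchor point relative to $f(0)$).
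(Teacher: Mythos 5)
Your argument is correct and is essentially the paper's own proof: the paper establishes attainability via Lemma \ref{lem:h_plus} and then rules out any larger value by contradiction using exactly your anchor point $(x)_{\mathcal{V}+}$, monotonicity, and H\"older continuity between that point and $0$, so your direct upper bound is just the contrapositive of that step. (One minor remark: in the min case, since $\check{x}=-(x)_{\mathcal{V}-}$ exactly, the identity $\lVert \check{x}\rVert=\lVert (x)_{\mathcal{V}-}\rVert$ already follows from absolute homogeneity of the norm, so Assumption \ref{assu:norm} is not actually needed for that particular step, though your both-directions argument is also valid.)
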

We are now ready to characterize the solution to the inverse modulus problem
(\ref{eq:InvMod}). For $r\in\mathbb{R}$, define
$\left(r\right)_{+}:= \max\left\{ r,0\right\} $.
\begin{prop}
  \label{prop:InvModRes} Suppose Assumption \ref{assu:norm} holds, and define
  \begin{eqnarray*}
    f_{1}^{*}(x) & = & C_{1}\left\lVert \left(x\right)_{\mathcal{V}+}\right\rVert ^{\gamma_{1}}\\
    f_{2}^{*}(x) & = & \max \left\{ b-C_{2}\left\lVert \left(x\right)_{\mathcal{V}-}\right\rVert ^{\gamma_{2}},\ C_{1}\left\lVert \left(x\right)_{\mathcal{V}+}\right\rVert ^{\gamma_{1}}\right\} .
  \end{eqnarray*}
  Then, $\left(f_{1}^{*},f_{2}^{*}\right)$ solves the inverse modulus problem
  (\ref{eq:InvMod}), and the inverse modulus is given by
  \begin{align}
    \begin{aligned}
      &\omega^{-1}\left(b,\Lambda_{+,\mathcal{V}}\left(\gamma_{1},C_{1}\right),\Lambda_{+,\mathcal{V}}\left(\gamma_{2},C_{2}\right)\right) \\
       = &\Big( \sum\nolimits_{i=1}^{n}\left(\left(b-C_{1}\left\lVert
                 \left(x_{i}\right)_{\mathcal{V}+}\right\rVert
             ^{\gamma_{1}}-C_{2}\left\lVert
                 \left(x_{i}\right)_{\mathcal{V}-}\right\rVert
             ^{\gamma_{2}}\right)/\sigma\left(x_{i}\right)\right)_{+}^{2} \Big)^{1/2}.
    \end{aligned}
  \end{align}
\end{prop}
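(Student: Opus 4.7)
The plan is to establish the proposition in three steps: verify feasibility of the proposed pair $(f_1^*, f_2^*)$, derive a pointwise lower bound on $(f_2(x_i) - f_1(x_i))^2$ valid for any feasible pair, and then observe that the candidate attains this bound at every design point, which simultaneously proves optimality and delivers the closed-form expression for $\omega^{-1}$.

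For feasibility, note that $f_2^*(0) - f_1^*(0) = b$ since $\|0\| = 0$. That $f_1^* \in \Lambda_{+,\mathcal{V}}(\gamma_1,C_1)$ is immediate from Lemma \ref{lem:h_plus} (it coincides with the function $h_+$ there). For $f_2^*$, its first argument $b - C_2\|(x)_{\mathcal{V}-}\|^{\gamma_2}$ is a constant plus the function $h_-$ of Lemma \ref{lem:h_plus} with parameters $(\gamma_2, C_2)$, so it lies in $\Lambda_{+,\mathcal{V}}(\gamma_2, C_2)$ because adding a constant preserves both Hölder continuity and monotonicity. Its second argument coincides with $f_1^*$, which lies in $\Lambda_{+,\mathcal{V}}(\gamma_1, C_1) \subset \Lambda_{+,\mathcal{V}}(\gamma_2, C_2)$ by the nesting hypothesis stated at the start of this subsection. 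Lemma \ref{lem:holder_max_holder} then yields $f_2^* \in \Lambda_{+,\mathcal{V}}(\gamma_2, C_2)$.

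For the pointwise lower bound, take any feasible $(f_1, f_2)$. Applying Lemma \ref{lem:InvModMax} separately to $f_1$ (with parameters $(\gamma_1, C_1)$ and $f_0 = f_1(0)$) and to $f_2$ (with parameters $(\gamma_2, C_2)$ and $f_0 = f_2(0)$) gives
\[ f_1(x) \leq f_1(0) + C_1 \|(x)_{\mathcal{V}+}\|^{\gamma_1}, \qquad f_2(x) \geq f_2(0) - C_2 \|(x)_{\mathcal{V}-}\|^{\gamma_2}. \]
Subtracting these inequalities and using the constraint $f_2(0) - f_1(0) = b$ yields
\[ f_2(x) - f_1(x) \geq b - C_1 \|(x)_{\mathcal{V}+}\|^{\gamma_1} - C_2 \|(x)_{\mathcal{V}-}\|^{\gamma_2}. \]
Since $(f_2(x_i) - f_1(x_i))^2 \geq 0$ trivially and dominates the square of the right-hand side whenever the latter is nonnegative, we obtain $(f_2(x_i) - f_1(x_i))^2 \geq \bigl((b - C_1 \|(x_i)_{\mathcal{V}+}\|^{\gamma_1} - C_2 \|(x_i)_{\mathcal{V}-}\|^{\gamma_2})_+\bigr)^2$ at every design point. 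Dividing by $\sigma(x_i)^2$ and summing over $i$ produces the desired lower bound on the objective, matching the claimed expression for $\omega^{-1}(b,\cdot,\cdot)^2$. To conclude, a direct computation shows $f_2^*(x) - f_1^*(x) = (b - C_1 \|(x)_{\mathcal{V}+}\|^{\gamma_1} - C_2 \|(x)_{\mathcal{V}-}\|^{\gamma_2})_+$, so the candidate attains the lower bound pointwise and is therefore optimal.

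The main obstacle is mostly bookkeeping rather than insight: one must apply Lemma \ref{lem:InvModMax} twice with different Hölder parameters and stitch the resulting one-sided envelopes into a single pointwise lower bound, and one must recognize that both constituents of $f_2^*$ belong to the larger class $\Lambda_{+,\mathcal{V}}(\gamma_2, C_2)$. The embedding $f_1^* \in \Lambda_{+,\mathcal{V}}(\gamma_1, C_1) \subset \Lambda_{+,\mathcal{V}}(\gamma_2, C_2)$ via the assumed nesting is the only subtle feasibility step; everything else reduces to arithmetic on the explicit forms of $f_1^*$ and $f_2^*$.
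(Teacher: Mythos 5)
Your proof is correct and follows essentially the same route as the paper's: feasibility of $(f_1^*,f_2^*)$ via Lemma \ref{lem:h_plus}, the nesting $\Lambda_{+,\mathcal{V}}(\gamma_1,C_1)\subseteq\Lambda_{+,\mathcal{V}}(\gamma_2,C_2)$, and Lemma \ref{lem:holder_max_holder}, with optimality driven by the envelope bounds of Lemma \ref{lem:InvModMax}. Your write-up merely makes the optimality step more explicit than the paper's (a pointwise lower bound $(f_2(x_i)-f_1(x_i))^2\geq\bigl((b-C_1\lVert (x_i)_{\mathcal{V}+}\rVert^{\gamma_1}-C_2\lVert (x_i)_{\mathcal{V}-}\rVert^{\gamma_2})_+\bigr)^2$ attained by the candidate), which is a welcome tightening rather than a different argument.
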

\begin{proof}
  To solve (\ref{eq:InvMod}), note that it is without loss of generality to
  restrict attention to the functions with $f_{1}(0)=0$ and $f_{2}(0)=b$, which
  is satisfied by $f_{1}^{*}$ and $f_{2}^{*}$. To simplify notation, write
  $\mathcal{F}_{1}^{0}=\Lambda_{+,\mathcal{V}}^{0}\left(\gamma_{1},C_{1}\right)$
  and
  $\mathcal{F}_{2}^{b}=\Lambda_{+,\mathcal{V}}^{b}\left(\gamma_{2},C_{2}\right)$.
  Since $f_{2}(0)>f_{1}(0)$, we want
  $ f_{1}(x) = \max_{f\in\mathcal{F}_{1}^{0}}f(x)$ and
  $ f_{2}(x) = \min_{f\in\mathcal{F}_{2}^{b}}f(x)$ as long as $x\in\mathcal{X}$
  satisfies $\min_{f\in\mathcal{F}_{2}} f(x)\geq \max_{f\in\mathcal{F}_{1}}f(x)$, and
  $f_{1}(x)=f_{2}(x)$ otherwise. Note that $f_{1}^{*}$ and $f_{2}^{*}$ are designed
  exactly to achieve this goal, which follows by Lemma \ref{lem:InvModMax}.

  It remains to check whether $f_{1}^{*}\in\mathcal{F}_{1}$ and
  $f_{2}^{*}\in\mathcal{F}_{2}$.  The former case is trivial. For the latter
  case, note that
  $f_{1}^{\ast}\in\Lambda_{+,\mathcal{V}}(\gamma_{1},C_{1})\subseteq\Lambda_{+,\mathcal{V}}(\gamma_{2},C_{2})$.
  Now, by Lemma \ref{lem:holder_max_holder}, we have
  $f_{2}^{*}\in\mathcal{F}_{2}.$
\end{proof}
The following corollary states an analogous result regarding the inverse modulus
$\omega^{-1}\left(b,\Lambda_{+,\mathcal{V}}\left(\gamma_{2},C_{2}\right),\Lambda_{+,\mathcal{V}}\left(\gamma_{1},C_{1}\right)\right)$.
\begin{corollary}
  \label{cor:InvModRes(21)} Define
  \begin{eqnarray*}
    g_{1}^{*}(x) & = & b-C_{1}\left\lVert \left(x\right)_{\mathcal{V}-}\right\rVert ^{\gamma_{1}}\\
    g_{2}^{*}(x) & = & \min \left\{ b-C_{1}\left\lVert \left(x\right)_{\mathcal{V}-}\right\rVert ^{\gamma_{1}},\ C_{2}\left\lVert \left(x\right)_{\mathcal{V}+}\right\rVert ^{\gamma_{2}}\right\} .
  \end{eqnarray*}
  Then, $\left(g_{1}^{*},g_{2}^{*}\right)$ solves the inverse modulus
  $\omega^{-1}\left(b,\Lambda_{+,\mathcal{V}}\left(\gamma_{2},C_{2}\right),\Lambda_{+,\mathcal{V}}\left(\gamma_{1},C_{1}\right)\right)$.
\end{corollary}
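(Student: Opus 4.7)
The plan is to mirror the argument for Proposition \ref{prop:InvModRes}, with the roles of the two classes reversed, and then verify that the closed-form candidates lie in the stipulated classes. Because the constraint $g_1(0)-g_2(0)=b>0$ places the smoother-class function \emph{above} the rougher-class function at the origin (opposite to the setup of the proposition), the shape of the extremal pair is also flipped: now the function in the smoother class should be as small as possible and the function in the rougher class as large as possible.

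First, it is without loss of generality to restrict attention to $g_1(0)=b$ and $g_2(0)=0$, which is exactly what $g_1^*$ and $g_2^*$ satisfy, since the objective and the constraint depend only on $g_1-g_2$. To minimize each summand pointwise, I would invoke Lemma \ref{lem:InvModMax}: the pointwise minimum over $\Lambda_{+,\mathcal{V}}^{b}(\gamma_1,C_1)$ equals $g_1^*(x)=b-C_1\lVert(x)_{\mathcal{V}-}\rVert^{\gamma_1}$, and the pointwise maximum over $\Lambda_{+,\mathcal{V}}^{0}(\gamma_2,C_2)$ equals $\bar g_2(x)=C_2\lVert(x)_{\mathcal{V}+}\rVert^{\gamma_2}$. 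At design points where $g_1^*(x_i)\ge \bar g_2(x_i)$, the pair $(g_1^*(x_i),\bar g_2(x_i))$ minimizes that summand; at the remaining $x_i$ the two pointwise feasible ranges overlap, so any common value produces zero contribution. Capping $\bar g_2$ from above by $g_1^*$ yields $g_2^*(x)=\min\{g_1^*(x),\bar g_2(x)\}$, which matches the claimed closed form.

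Second, I would verify the class memberships. The fact $g_1^*\in\Lambda_{+,\mathcal{V}}(\gamma_1,C_1)$ is immediate from Lemma \ref{lem:InvModMax}. For $g_2^*$, Lemma \ref{lem:h_plus} gives $\bar g_2\in\Lambda_{+,\mathcal{V}}(\gamma_2,C_2)$, and since $g_1^*\in\Lambda_{+,\mathcal{V}}(\gamma_1,C_1)\subseteq\Lambda_{+,\mathcal{V}}(\gamma_2,C_2)$ (the inclusion already used in the proof of Proposition \ref{prop:InvModRes}), the function $g_2^*=\min\{g_1^*,\bar g_2\}$ is a pointwise minimum of two members of $\Lambda_{+,\mathcal{V}}(\gamma_2,C_2)$.

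The main obstacle is this last step, because Lemma \ref{lem:holder_max_holder} is stated only for pointwise maxima, whereas $g_2^*$ is a pointwise minimum. The needed closure-under-minimum is nonetheless a two-line remark: coordinate-wise monotonicity is inherited by minima (if $h_i(x)\le h_i(y)$ for $i=1,2$ then $\min\{h_1(x),h_2(x)\}\le\min\{h_1(y),h_2(y)\}$) and H\"older continuity with the same constants follows from $|\min\{a,b\}-\min\{c,d\}|\le\max\{|a-c|,|b-d|\}$. Once this remark is recorded, the rest of the argument is a direct transcription of the proof of Proposition \ref{prop:InvModRes}, with the inequalities reversed to reflect $g_1(0)>g_2(0)$.
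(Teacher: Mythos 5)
Your proposal is correct and follows essentially the same route the paper intends: the corollary is the mirror image of Proposition \ref{prop:InvModRes}, with the pointwise extremal values from Lemma \ref{lem:InvModMax} flipped because the $\Lambda_{+,\mathcal{V}}(\gamma_{1},C_{1})$ function now sits above at the origin, and the paper leaves the proof implicit for exactly this reason. Your observation that Lemma \ref{lem:holder_max_holder} covers only maxima, so that closure of $\Lambda_{+,\mathcal{V}}(\gamma_{2},C_{2})$ under pointwise minima must be checked directly (the negation trick would destroy monotonicity), is the one detail the paper glosses over, and your two-line argument via $\lvert\min\{a,b\}-\min\{c,d\}\rvert\leq\max\{\lvert a-c\rvert,\lvert b-d\rvert\}$ and inheritance of coordinate-wise monotonicity settles it correctly.
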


\begin{remark}[Role of Assumption \ref{assu:norm}]\label{rem:norm} Proposition
  \ref{prop:InvModRes} requires Assumption \ref{assu:norm} due to the specific
  form of monotonicity we consider. By considering coordinate-wise monotonicity, we
  must take a norm that is ``aligned'' with this direction of monotonicity. The
  assumption precisely imposes this. This is a unique feature that arises in the
  multivariate setting. To allow for more general norms, let $\mathcal{B}$ be an
  orthonormal basis of $\mathbb{R}^{k}$, and denote by $z^{\mathcal{B}}$ the
  coordinate vector of $z \in \mathbb{R}^{k}$ with respect to $\mathcal{B}$ and
  $z^{\mathcal{B}}_{j}$ its $j$th component. Suppose the regression function is
  monotone in the coefficients with respect to this basis $\mathcal{B}$, so that
  the monotone H\"older class is given as
  \begin{equation*}
  \Lambda_{+,\mathcal{V}}(\gamma,C):=\left\{ f\in\Lambda(\gamma,C):f(x)\geq
    f(z)\,\,\text{if }x^{\mathcal{B}}_{j}\geq z^{\mathcal{B}}_{j}\text{ }\forall
    j \in\mathcal{V} \text{ and }x^{\mathcal{B}}_{j}\geq z^{\mathcal{B}}_{j}\,\,\forall
    j\notin\mathcal{V}\right\}.
\end{equation*}
Then, the condition we want to impose on the norm $\lVert \cdot \rVert$ is
monotonicity with respect to the magnitude of $z^{\mathcal{B}}_{j}$. A special
case is the Mahalanobis distance.
\end{remark}

\subsection{Minimax Rate of Adaptation}
\label{sec:Asymptotic-rate-results}

Using this solution to the inverse modulus, we derive the rate of convergence of
the between class of modulus, which characterizes how fast the worst-case
expected length of the adaptive CIs can go to 0 as $n\rightarrow\infty$. We
derive the rates under the assumption that the sequence of design points
$\{x_{i}\}_{i=1}^{\infty}$ is a realization of a sequence of independent and
identically distributed random vectors $\{ X_{i} \}_{i=1}^{\infty}$ drawn from a
distribution that satisfies some mild regularity conditions. This gives an
intuitive restriction on the design points, and also shows that the result
applies under random design points as well.\footnote{Consider the model
  $y_{i}=f(X_{i})+\varepsilon_{i}$, for $i=1,\dots,n$, with the
  $X_{i}\overset{\mathrm{i.i.d.}}{\sim}p_{X}$ with
  $\varepsilon_{i}|X_{i}\sim N(0,\sigma^{2}(X_{i}))$.  Then, conditional on
  $\{ X_{i} \}_{i=1}^{n}=\{x_{i}\}_{i=1}^{n}$, this model is equivalent with our
  model.} Define
$r(\gamma_{1}, \gamma_{2}) =
({2+k_{+}/\gamma_{1}+(k-k_{+})/\gamma_{2}})^{-1}$. The following theorem fully
characterizes the minimax rate of adaptation.

\begin{theorem}\label{thm:rate_result}
  Let $\left\{ X_{i}\right\} _{i=1}^{\infty}$ be an i.i.d. sequence of random
  vectors with support $\mathcal{X}$.  Suppose $X_{i}$ admits a probability
  density function $p_{X}(\cdot)$ that is continuous at $0$ with $p_{X}(0)>0,$
  and assume $\sigma(\cdot) = 1$.  Then, for almost all realizations
  $\left\{ x_{i}\right\} _{i=1}^{\infty}$ of
  $\left\{ X_{i}\right\} _{i=1}^{\infty}$ and for all $\delta > 0$, we have
  \begin{align*}
    \lim_{n\to\infty}n^{r(\gamma_{1},\gamma_{2})}\omega\left(\delta,\Lambda_{+,\mathcal{V}}\left(\gamma_{1},C_{1}\right),\Lambda_{+,\mathcal{V}}\left(\gamma_{2},C_{2}\right)\right)
    & =\left(\delta^{2}/c_{1,2}^{\ast}\right)^{r(\gamma_{1}, \gamma_{2})},\text{ and}\\
    \lim_{n\to\infty}n^{r(\gamma_{1},\gamma_{2})}\omega\left(\delta,\Lambda_{+,\mathcal{V}}\left(\gamma_{2},C_{2}\right),\Lambda_{+,\mathcal{V}}\left(\gamma_{1},C_{1}\right)\right) & =\left(\delta^{2}/c_{2,1}^{\ast}\right)^{r(\gamma_{1}, \gamma_{2})},
  \end{align*}
  where $c_{1,1}^{\ast}$ and $c_{2,1}^{\ast}$ are constants that depend only on
  the function spaces.
\end{theorem}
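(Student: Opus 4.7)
The plan is to invert the explicit formula for the inverse modulus supplied by Proposition \ref{prop:InvModRes}. Writing $g(x) := C_1\|(x)_{\mathcal{V}+}\|^{\gamma_1} + C_2\|(x)_{\mathcal{V}-}\|^{\gamma_2}$ and $\phi_b(x) := (b - g(x))_+^2$, the proposition (with $\sigma\equiv 1$) yields $\omega^{-1}(b,\Lambda_{+,\mathcal{V}}(\gamma_1,C_1),\Lambda_{+,\mathcal{V}}(\gamma_2,C_2))^2 = \sum_{i=1}^n \phi_b(x_i)$. Since $\omega$ is the inverse function of $\omega^{-1}$ (both monotone in their scalar argument), proving the first limit is equivalent to showing that the $b_n(\delta)$ defined by $\sum_i \phi_{b_n}(x_i) = \delta^2$ satisfies $n^r b_n(\delta) \to (\delta^2/c^*_{1,2})^r$ almost surely, where $r := r(\gamma_1,\gamma_2)$, $K := r^{-1} - 2 = k_+/\gamma_1 + (k-k_+)/\gamma_2$, and $c^*_{1,2}$ is to be identified.

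The core step is a population-level scaling computation for $\E[\phi_b(X)]$ as $b\downarrow 0$. Partition $\mathbb{R}^k$ into $2^{k_+}$ orthants by the signs of the $\mathcal{V}$-coordinates. In the all-nonnegative orthant, substitute $x_j = b^{1/\gamma_1}y_j$ for $j\in\mathcal{V}$ and $x_j = b^{1/\gamma_2}y_j$ for $j\notin\mathcal{V}$; the Jacobian is $b^K$. Since $(x)_{\mathcal{V}-}$ vanishes on $\mathcal{V}$-coordinates here, absolute homogeneity gives $\|(x)_{\mathcal{V}-}\|^{\gamma_2} = b\,\|(0,y_{\mathcal{V}^c})\|^{\gamma_2}$ exactly, while
\[
  \|(x)_{\mathcal{V}+}\|^{\gamma_1} \;=\; b\,\bigl\|\bigl(y_{\mathcal{V}},\,b^{1/\gamma_2-1/\gamma_1}\,y_{\mathcal{V}^c}\bigr)\bigr\|^{\gamma_1} \;\longrightarrow\; b\,\|(y_{\mathcal{V}},0)\|^{\gamma_1}
\]
pointwise, using $\gamma_1\ge\gamma_2$ and continuity of the norm. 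Other orthants trade factors of $b^{1/\gamma_1}$ for $b^{1/\gamma_2}$ in the Jacobian and yield contributions of strictly smaller order. Combining continuity of $p_X$ at $0$ with dominated convergence---with the envelope provided by the fact that $\{g\le b\}$, after rescaling, is confined to a fixed box via Assumption \ref{assu:norm}---gives
\[
  b^{-(2+K)}\E[\phi_b(X)] \;\longrightarrow\; p_X(0)\!\int_{\mathbb{R}_+^{k_+}\times\mathbb{R}^{k-k_+}}\!\bigl(1 - C_1\|(y_{\mathcal{V}},0)\|^{\gamma_1} - C_2\|(0,y_{\mathcal{V}^c})\|^{\gamma_2}\bigr)_+^2\,dy \;=:\; c^*_{1,2}.
\]

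Upgrading to an almost-sure statement uses concentration. Setting $b_n := (\delta^2/c^*_{1,2})^r n^{-r}$ makes $n\E[\phi_{b_n}(X)]\to\delta^2$, and the bound $\phi_b\le b^2$ yields $\operatorname{Var}(\phi_b(X))\le b^2\E[\phi_b(X)]\lesssim b^{4+K}$, so $n\operatorname{Var}(\phi_{b_n}(X)) = O(n^{-2r})$. Bernstein's inequality then produces tail bounds on $\sum_i\phi_{b_n}(X_i) - n\E[\phi_{b_n}(X)]$ that are summable in $n$; Borel--Cantelli delivers $\sum_i\phi_{b_n}(X_i)\to\delta^2$ almost surely, and monotonicity of $b\mapsto\omega^{-1}(b)$ lets us invert to conclude $n^r b_n(\delta)\to(\delta^2/c^*_{1,2})^r$ a.s.\ for each fixed $\delta$. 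To remove the $\delta$-dependence of the null set, establish the limit on a countable dense subset of $\delta$ and extend by monotonicity of $\omega(\cdot,\cdot,\cdot)$ in its first argument. The second limit in the theorem follows identically from Corollary \ref{cor:InvModRes(21)}, whose construction effectively swaps $(x)_{\mathcal{V}+}$ with $(x)_{\mathcal{V}-}$ in $g$; the analogous orthant analysis (dominated now by the all-nonpositive orthant for the $\mathcal{V}$-coordinates) produces the matching rate with a distinct constant $c^*_{2,1}$.

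I expect the main obstacle to be the orthant-by-orthant scaling when only Assumption \ref{assu:norm} is imposed on the norm. Establishing both the pointwise limit of $\|(x)_{\mathcal{V}+}\|^{\gamma_1}/b$ in each orthant and a sufficient dominating function requires a lower bound $g(x) \gtrsim \|(x)_{\mathcal{V}+}\|^{\gamma_1} + \|(x)_{\mathcal{V}-}\|^{\gamma_2}$ that confines $\{g\le b\}$ to the anticipated box of side-lengths $b^{1/\gamma_1}$ and $b^{1/\gamma_2}$ in the appropriate directions. Assumption \ref{assu:norm} provides exactly the coordinate-wise monotonicity of the norm needed for this estimate, which is routine but must be checked separately in each of the $2^{k_+}$ orthants.
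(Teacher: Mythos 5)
Your proposal follows essentially the same route as the paper's proof: both start from the closed form of the inverse modulus in Proposition \ref{prop:InvModRes}, obtain the population limit of the rescaled sum by an orthant-by-orthant change of variables (Jacobian $b^{k_{+}/\gamma_{1}+(k-k_{+})/\gamma_{2}}$ on the dominant orthants), use continuity of $p_{X}$ at $0$ together with dominated convergence, upgrade to an almost-sure statement via Bernstein's inequality and Borel--Cantelli, and finally invert to pass from the inverse modulus to the modulus. The only step where you genuinely deviate is the inversion: you argue directly by monotonicity (a squeeze with $b_{n}^{\pm}$, extended over a countable dense set of $\delta$), whereas the paper first verifies the range condition of its part (b), namely $n^{r(\gamma_{1},\gamma_{2})}\min_{i\le n}\{C_{1}\lVert (x_{i})_{\mathcal{V}+}\rVert^{\gamma_{1}}+C_{2}\lVert (x_{i})_{\mathcal{V}-}\rVert^{\gamma_{2}}\}\to 0$ a.s.\ (equivalently $n^{r(\gamma_{1},\gamma_{2})}\omega(0,\cdot,\cdot)\to 0$), and then invokes Lemma F.1 of \citet{armstrong2016optimal}; your more elementary inversion is legitimate and effectively absorbs that step, since the strict positivity of the limit of the rescaled inverse modulus at any fixed $b>0$ already rules out the degenerate region where the inverse modulus vanishes. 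One concrete caveat: your claim that the non-dominant orthants contribute at strictly smaller order, and hence your formula for $c^{\ast}_{1,2}$ as an integral over $\mathbb{R}_{+}^{k_{+}}\times\mathbb{R}^{k-k_{+}}$ alone, is valid only when $\gamma_{1}>\gamma_{2}$. When $\gamma_{1}=\gamma_{2}$ --- a case the theorem covers (Section \ref{sec:solut-modul-probl} only requires $\gamma_{j}\ge\gamma_{J}$, $C_{j}\le C_{J}$) and which the paper needs, e.g.\ for the second part of Lemma \ref{lem:tau} --- every orthant scales as $b^{k/\gamma_{1}}$, the prefactor $b^{1/\gamma_{2}-1/\gamma_{1}}$ in your pointwise limit is identically $1$ so $\lVert(y_{\mathcal{V}},b^{1/\gamma_{2}-1/\gamma_{1}}y_{\mathcal{V}^{c}})\rVert^{\gamma_{1}}$ does not converge to $\lVert(y_{\mathcal{V}},0)\rVert^{\gamma_{1}}$, and the limiting constant must sum the contributions of all orthants, exactly as the paper's case distinction in the definition of $c_{+}(C_{1},C_{2})$ records; with that correction (which changes only the constant, not the rate) your argument goes through.
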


\begin{remark}
  The result immediately implies the rate of convergence for the between class
  modulus
  \begin{align*}
    & \lim_{n\to\infty}n^{r(\gamma_{1},\gamma_{2})}\omega_{+}\left(\delta,\Lambda_{+,\mathcal{V}}\left(\gamma_{1},C_{1}\right),\Lambda_{+,\mathcal{V}}\left(\gamma_{2},C_{2}\right)\right)\\
    = & \max\left\{ \delta^{2}/c_{1,2}^{\ast},\delta^{2}/c_{2,1}^{\ast}\right\} ^{r(\gamma_{1},\gamma_{2})}.
  \end{align*}
  Hence, if a CI maintains coverage over
  $\Lambda_{+,\mathcal{V}}\left(\gamma_{2},C_{2}\right)$, the best possible
  worst-case length of this CI over
  $\Lambda_{+,\mathcal{V}}\left(\gamma_{2},C_{2}\right)$ goes to $0$ at the same
  rate as $n^{-r(\gamma_{1},\gamma_{2})}$.
\end{remark}

\begin{remark}[Heteroskedasticity]
  For simplicity, the theorem imposes a homoskedasticity condition (i.e.,
  $\sigma(\cdot) = 1$). However, allowing for general $\sigma(\cdot)$ is
  straightforward and requires only weak regularity conditions on
  $\sigma(\cdot)$. See Appendix \ref{apdx:Heteroskedasiticy} for details.
\end{remark}
%

Theorem \ref{thm:rate_result} shows how the monotonicity restriction plays a
role in determining the minimax rates of adaptation to H\"older coefficients
under the multivariate nonparametric regression setting. When $k_{+}=k$, the
minimax rate of adaptation is $n^{-\frac{1}{2+k/\gamma_{1}}}$, which equals the
minimax convergence rate over
$\omega\left(\delta,\Lambda_{+,\mathcal{V}}\left(\gamma_{1},C_{1}\right),\Lambda_{+,\mathcal{V}}\left(\gamma_{1},C_{1}\right)\right)$.
This shows that strong adaptation is possible if the regression function is
monotone with respect to all the variables, just like in the univariate case. On
the other hand, when $k_{+}=0$, the rate becomes
$n^{-\frac{1}{2+k/\gamma_{2}}}$, consistent with the previous findings that
there is no scope of adaptation for general H\"older classes without any shape
constraint. Importantly, Theorem \ref{thm:rate_result} characterizes the
convergence rate for the case where $0<k_{+}<k$, where it gives an intuitive
intermediate rate between the two extreme.

\subsection{Construction of the Adaptive CI}
\label{sec:constr-adapt-ci}
Here, we give the explicit formula of the CIs for our parameters of interest,
now that we have derived the form of the moduli of continuity and the solutions
to the modulus problems in the previous section.  We first consider
$L_{0}f$. Before stating the result, it is convenient to define the following
functions
\begin{align*}
  D_{Jj,\delta}(x_{i}) := &
                            (\omega(\delta,\mathcal{F}_{J},\mathcal{F}_{j})-C_{j}\lVert
                            (x_{i})_{\mathcal{V}-}\rVert
                            ^{\gamma_{j}}-C_{J}\lVert
                            (x_{i})_{\mathcal{V}+}\rVert ^{\gamma_{J}})_{+},
                            \text{ and}\\
  D_{jJ, \delta}(x_{i}) := &( \omega(\delta,\mathcal{F}_{j},\mathcal{F}_{J})-C_{J}\lVert (x_{i})_{\mathcal{V}-}\rVert ^{\gamma_{J}}-C_{j}\lVert (x_{i})_{\mathcal{V}+}\rVert ^{\gamma_{j}})_{+}.
\end{align*}

\begin{corollary}
  \label{cor:chat formula}For $Lf=f(0)$ and $\beta=1/2$, the lower CI defined in \eqref{eq:opt lower CI} is given by
  \[
    \hat{c}_{\delta}^{\ell,j}=\hat{L}_{\delta}^{\ell,j}-\frac{1}{2}\left( \omega\left(\delta,\mathcal{F}_{J},\mathcal{F}_{j}\right)+\frac{\delta^{2}}{\sum_{i=1}^{n}D_{Jj,\delta}\left(x_{i}\right)} \right),
  \]
  where
  \begin{align*}
    \hat{L}_{\delta}^{\ell,j}  = &\, \frac{\sum_{i=1}^{n}D_{Jj,\delta}\left(x_{i}\right)y_{i}}{\sum_{i=1}^{n}D_{Jj,\delta}\left(x_{i}\right)}+\frac{\omega\left(\delta,\mathcal{F}_{J},\mathcal{F}_{j}\right)}{2}\\
                                & -\frac{\sum_{i=1}^{n}D_{Jj,\delta}\left(x_{i}\right)(\omega\left(\delta,\mathcal{F}_{J},\mathcal{F}_{j}\right)-C_{j}\left\lVert \left(x_{i}\right)_{\mathcal{V}-}\right\rVert ^{\gamma_{j}}+C_{J}\left\lVert \left(x_{i}\right)_{\mathcal{V}+}\right\rVert ^{\gamma_{J}})}{2\sum_{i=1}^{n}D_{Jj,\delta}\left(x_{i}\right)}.
  \end{align*}
  Likewise, the upper bound of the CI is given by
  \[
    \hat{c}_{\delta}^{u,j}=\hat{L}_{\delta}^{u,j}+\frac{1}{2}\left(\omega\left(\delta,\mathcal{F}_{j},\mathcal{F}_{J}\right)+\frac{\delta^{2}}{\sum_{i=1}^{n}D_{jJ, \delta}\left(x_{i}\right)}\right),
  \]
  where
  \begin{align*}
    \hat{L}_{\delta}^{u,j}  = & \, \frac{\sum_{i=1}^{n}D_{jJ, \delta}\left(x_{i}\right)y_{i}}{\sum_{i=1}^{n}D_{jJ, \delta}\left(x_{i}\right)}+\frac{\omega\left(\delta,\mathcal{F}_{j},\mathcal{F}_{J}\right)}{2}\\
                             & -\frac{\sum_{i=1}^{n}D_{jJ, \delta}\left(x_{i}\right)(\omega\left(\delta,\mathcal{F}_{j},\mathcal{F}_{J}\right)-C_{J}\left\lVert \left(x_{i}\right)_{\mathcal{V}-}\right\rVert ^{\gamma_{J}}+C_{j}\left\lVert \left(x_{i}\right)_{\mathcal{V}+}\right\rVert ^{\gamma_{j}})}{2\sum_{i=1}^{n}D_{jJ, \delta}\left(x_{i}\right)}.
  \end{align*}
\end{corollary}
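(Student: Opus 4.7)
My plan is to plug the explicit extremizers from Proposition \ref{prop:InvModRes} and Corollary \ref{cor:InvModRes(21)} into the general formulas \eqref{eq:lhat_lower} and \eqref{eq:opt lower CI}, and then simplify using a closed form for the modulus derivative obtained by implicit differentiation.

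For the lower bound, because $\mathcal{F}_j \subset \mathcal{F}_J$, the pair of extremizers for $\omega(\delta, \mathcal{F}_J, \mathcal{F}_j)$ is furnished by Corollary \ref{cor:InvModRes(21)} (not Proposition \ref{prop:InvModRes} directly, since the roles are reversed): with $b = \omega(\delta, \mathcal{F}_J, \mathcal{F}_j)$,
\begin{align*}
  g_{j,\delta}^{*,Jj}(x) &= b - C_j\|(x)_{\mathcal{V}-}\|^{\gamma_j},\\
  f_{J,\delta}^{*,Jj}(x) &= \min\bigl\{b - C_j\|(x)_{\mathcal{V}-}\|^{\gamma_j},\; C_J\|(x)_{\mathcal{V}+}\|^{\gamma_J}\bigr\}.
\end{align*}
A direct calculation gives $g_{j,\delta}^{*,Jj}(x_i) - f_{J,\delta}^{*,Jj}(x_i) = D_{Jj,\delta}(x_i)$. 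Since $D_{Jj,\delta}(x_i)$ vanishes exactly on the set where the $\min$ equals its first argument, the product $D_{Jj,\delta}(x_i)\bigl(g_{j,\delta}^{*,Jj}(x_i)+f_{J,\delta}^{*,Jj}(x_i)\bigr)$ reduces uniformly to $D_{Jj,\delta}(x_i)\bigl(\omega - C_j\|(x_i)_{\mathcal{V}-}\|^{\gamma_j} + C_J\|(x_i)_{\mathcal{V}+}\|^{\gamma_J}\bigr)$, with no case distinction needed. The constraint also gives $L(g_{j,\delta}^{*,Jj}+f_{J,\delta}^{*,Jj}) = b + 0 = \omega(\delta, \mathcal{F}_J, \mathcal{F}_j)$.

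Next, I would derive $\omega'(\delta, \mathcal{F}_J, \mathcal{F}_j) = \delta / \sum_i D_{Jj,\delta}(x_i)$ by implicitly differentiating the inverse-modulus identity $\sum_i D_{Jj,\delta}(x_i)^2 = \delta^2$ (taking $\sigma \equiv 1$; the weighted case is identical) with respect to $\delta$, using $\partial_b(b-\cdot)_+ = \mathbf{1}\{D_{Jj,\delta} > 0\}$ together with $D\cdot\mathbf{1}\{D>0\} = D$. Plugging this and the previous paragraph's identities into \eqref{eq:lhat_lower} immediately yields the claimed closed form for $\hat{L}_\delta^{\ell,j}$.

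Finally, for the endpoint $\hat{c}_\delta^{\ell,j}$ itself, I would invoke Lemma \ref{thm:ArmKol}: $\beta = 1/2$ gives $z_\beta = 0$, so $\underline{\delta} = z_{1-\alpha}$, meaning the bookkeeping variable $\delta$ in the corollary and the normal quantile $z_{1-\alpha}$ coincide. Substituting $\delta = z_{1-\alpha}$ into \eqref{eq:opt lower CI} combines $+\tfrac12\delta\omega'$ and $-z_{1-\alpha}\omega'$ into $-\tfrac12\delta\omega' = -\delta^2/(2\sum_i D_{Jj,\delta}(x_i))$, producing the stated formula. The upper bound $\hat{c}_\delta^{u,j}$ follows by the entirely symmetric argument: for $\omega(\delta, \mathcal{F}_j, \mathcal{F}_J)$ the containment runs in the direction required by Proposition \ref{prop:InvModRes} itself, so one reads off $(f_{j,\delta}^{*,jJ}, g_{J,\delta}^{*,jJ})$ directly and repeats the previous steps with $\mathcal{V}+$ and $\mathcal{V}-$ (and the pairings $(C_j,\gamma_j)$ vs.\ $(C_J,\gamma_J)$) swapped, producing $D_{jJ,\delta}$ in place of $D_{Jj,\delta}$. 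The entire argument is essentially bookkeeping; the one substantive point is the $\beta = 1/2$ cancellation in the last step, which is precisely what allows the closed form to be written without any explicit appearance of $z_{1-\alpha}$.
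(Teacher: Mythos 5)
Your proposal is correct and takes essentially the same route as the paper's proof: read off the extremizers from Corollary \ref{cor:InvModRes(21)} (lower) and Proposition \ref{prop:InvModRes} (upper), observe that their difference at the design points is exactly $D_{Jj,\delta}$ (resp.\ $D_{jJ,\delta}$) and that $D\cdot(g+f)$ simplifies with no case distinction, then substitute into \eqref{eq:lhat_lower} and \eqref{eq:opt lower CI} using $\omega'(\delta,\mathcal{F}_{J},\mathcal{F}_{j})=\delta/\sum_{i}D_{Jj,\delta}(x_{i})$ and the $\beta=1/2$ (i.e.\ $\delta=z_{1-\alpha}$) cancellation. The only minor difference is that you obtain the derivative formula by implicitly differentiating the closed-form inverse modulus, while the paper invokes translation invariance and Lemma B.3 of \citet{armstrong2016optimal}; the resulting identity is the same, and your handling of the $\beta=1/2$ step is, if anything, more explicit than the paper's.
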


  The first terms in the formula of $\hat{L}_{\delta}^{\ell,j}$ and
  $\hat{L}_{\delta}^{u,j}$ are the random terms linear in $y_{i}$ while the
  remaining terms are non-random fixed terms. If $\mathcal{V}=\{1,...,k\}$ (so
  the function is monotone in every coordinate), the random terms can be viewed
  as a kernel estimator with a data-dependent bandwidth. Too see this, if we
  define
  \[
    k(x)=\left[1-C_{j}\left\lVert \left(x\right)_{\mathcal{V}-}\right\rVert ^{\gamma_{j}}-C_{J}\left\lVert \left(x\right)_{\mathcal{V}+}\right\rVert ^{\gamma_{J}}\right]_{+},
  \]
  and
  \[
    h_{mn}\left(x\right)=\begin{cases}
      \omega\left(\delta,\mathcal{F}_{J},\mathcal{F}_{j}\right){}^{1/\gamma_{J}} & \text{if the }m\text{th coordinate of }x\geq0\\
      \omega\left(\delta,\mathcal{F}_{J},\mathcal{F}_{j}\right){}^{1/\gamma_{j}}
      & \text{otherwise},
    \end{cases}
  \]
  we have
  \[
    \frac{\sum_{i=1}^{n}D_{Jj,\delta}\left(x_{i}\right)y_{i}}{\sum_{i=1}^{n}D_{Jj,\delta}\left(x_{i}\right)}=\frac{\sum_{i=1}^{n}k\left(x_{1i}/h_{1n}\left(x_{i}\right),...,x_{ki}/h_{kn}\left(x_{i}\right)\right)y_{i}}{\sum_{i=1}^{n}k\left(x_{1i}/h_{1n}\left(x_{i}\right),...,x_{ki}/h_{kn}\left(x_{i}\right)\right)}.
  \]
  Hence, the CI can be considered to be based on a Nadaraya-Watson type
  estimator, correcting for the bias.

As described in Section \ref{sec:constr-adapt-cis}, the proposed CI is given by
$ \cap_{j=1}^{J} [\hat{c}_{z_{1-\tau^{\ast}}}^{\ell,j},
\hat{c}_{z_{1-\tau^{\ast}}}^{\ell,j}]$, where $\tau^{\ast}$ is defined in Lemma
\ref{lem:Bonf-improv}. Here, we show that the distribution of
$(V(\tau)',W(\tau)')'$ does not depend on $\tau$ as $n \to \infty$. The
implication of this invariance with respect to $\tau$, is that calculating
$\tau^{\ast}$ boils down to calculating the quantile of the maximum of Gaussian
vectors. The variance matrix of this limiting Gaussian random vector is known,
and thus the said quantile can be easily simulated. Moreover, when
$\gamma_{1} = \cdots = \gamma_{J}$ so that the parameters spaces differs only in
$C_{j}$, $\tau^{\ast}$ can be shown to be bounded away from zero by a constant
that does not depend on $J$, for large $n$. Hence, the constant of the CI does
not grow to infinity as $J \to \infty$ in this case.\footnote{This is especially
  useful when one wishes to adapt to $C$ while keeping $\gamma$ fixed. For
  example, \cite{kwon2020InferenceRegressionDiscontinuity} take $\gamma_{j} = 1$
  and consider the problem of adapting to the Lipschitz constant in a regression
  discontinuity setting.}

\begin{lemma}\label{lem:tau}
  Under the same set of conditions of Theorem \ref{thm:rate_result},
  $(V\left(\tau\right)',W\left(\tau\right)' )' \overset{d}{\to}
  (V_{\infty}',W_{\infty}')'$ as $n \to \infty$, where
  $(V_{\infty}',W_{\infty}')'$ is a Gaussian random vector that does not depend
  on $\tau.$ Moreover, if $\gamma_{1}= \cdots = \gamma_{J}$, then, for large
  $n$, $\tau^{\ast} > \eta$ for some $\eta > 0$ that does not depend on $J$.
\end{lemma}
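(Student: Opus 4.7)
The plan is to combine the explicit extremal functions from Proposition \ref{prop:InvModRes} and Corollary \ref{cor:InvModRes(21)} with the rate information in Theorem \ref{thm:rate_result} to reduce the entries of the covariance matrix of $(V(\tau)',W(\tau)')'$ to weighted sums over $x_i$ that admit a $\tau$-invariant limit after an anisotropic rescaling.

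First I would note that, by Proposition \ref{prop:InvModRes} and Corollary \ref{cor:InvModRes(21)}, $g_{j,z_{1-\tau}}^{*,Jj}(x) - f_{J,z_{1-\tau}}^{*,Jj}(x) = D_{Jj,z_{1-\tau}}(x)$ and $|g_{j,z_{1-\tau}}^{*,jJ}(x) - f_{J,z_{1-\tau}}^{*,jJ}(x)| = D_{jJ,z_{1-\tau}}(x)$, with the $D$-functions from Corollary \ref{cor:chat formula}. The covariance formulas in Section \ref{sec:constr-adapt-cis} therefore become normalized sums such as $z_{1-\tau}^{-2}\sum_i D_{Jj,z_{1-\tau}}(x_i)D_{J\ell,z_{1-\tau}}(x_i)$, and the defining property of the ordered modulus forces the diagonal entries $z_{1-\tau}^{-2}\sum_i D_{Jj,z_{1-\tau}}(x_i)^2$ to equal $1$ identically, consistent with the unit-variance claim. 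For an off-diagonal entry I would apply the anisotropic change of variables that rescales the $i$-th coordinate of $x$ by $\omega_j^{1/\gamma_J}$ if $i \in \mathcal{V}$ and $x_i \geq 0$, by $\omega_j^{1/\gamma_j}$ if $i \in \mathcal{V}$ and $x_i < 0$, and by $\omega_j^{1/\gamma_J}$ if $i \notin \mathcal{V}$, where $\omega_j := \omega(z_{1-\tau},\mathcal{F}_J,\mathcal{F}_j)$. Under this map, $D_{Jj,z_{1-\tau}}(x)/\omega_j$ converges pointwise to a fixed kernel $\tilde D_j(u)$ depending only on $(\gamma_j,\gamma_J,C_j,C_J,\mathcal{V},\lVert\cdot\rVert)$.

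By Theorem \ref{thm:rate_result}, $\omega_j \sim c_j\, z_{1-\tau}^{2 r_j} n^{-r_j}$ for a rate exponent $r_j$ determined by $(\gamma_j,\gamma_J)$, and the identity $r_j\cdot(2 + k_+/\gamma_j + (k-k_+)/\gamma_J) = 1$ makes every explicit occurrence of $z_{1-\tau}$ in the prefactor of the rescaled sum cancel against the Jacobian. A Riemann-sum argument in the spirit of the proof of Theorem \ref{thm:rate_result}, using continuity of $p_X$ at $0$, then yields $z_{1-\tau}^{-2}\sum_i D_{Jj}(x_i)D_{J\ell}(x_i)\to p_X(0)\int \tilde D_j(u)\tilde D_\ell(u)\,du$, a $\tau$-independent limit that depends only on the function spaces. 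The same reasoning handles $\mathrm{Cov}(W_j,W_\ell)$ and $\mathrm{Cov}(V_j,W_\ell)$; since $(V(\tau)',W(\tau)')'$ is mean-zero Gaussian, convergence of the covariance matrix upgrades to joint distributional convergence to a Gaussian vector $(V_\infty',W_\infty')'$ whose law does not depend on $\tau$.

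For the second assertion, when $\gamma_1 = \cdots = \gamma_J = \gamma$ all rates $r_j = 1/(2+k/\gamma)$ coincide, so every $\omega_j$ has the same dependence on $n$ and $\tau$ and differs across $j$ only through a $C_j$-dependent constant. Letting $C$ range over $[0,C_J]$, I would view $(V_\infty,W_\infty)$ as the finite-dimensional marginals of a continuous centered Gaussian process $\{(V_\infty(C),W_\infty(C))\}_{C\in[0,C_J]}$ with covariance kernel $(C,C')\mapsto p_X(0)\int \tilde D_C(u)\tilde D_{C'}(u)\,du$. A H\"older-type bound on $\mathrm{Var}(V_\infty(C)-V_\infty(C'))$ in $|C-C'|$, obtained from the Lipschitz dependence of $\tilde D_C$ on $C$, yields almost-sure sample-path continuity, and then Borell--TIS (or Dudley's entropy bound) supplies a finite $t$ with $\p\bigl(\sup_C|V_\infty(C)|\vee\sup_C|W_\infty(C)|>t\bigr)\leq\alpha$. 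Setting $\eta$ so that $z_{1-\eta}=t$ and transferring the bound back to finite $n$ via the first part of the lemma yields $\tau^*\geq\eta$ for all large $n$, with $\eta$ independent of $J$.

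The main obstacle is the cross-covariance analysis when $\gamma_j\neq\gamma_\ell$: then $D_{Jj}$ and $D_{J\ell}$ are supported on regions whose bandwidths have different rates in every coordinate, so the rescaling must be by the smaller of the two bandwidths, and one has to show that on the overlap the kernel with the wider support is essentially at its peak so that the cross-covariance vanishes in the limit. Tracking this carefully, and verifying that the leftover exponents of $z_{1-\tau}$ and $n$ both cancel (forced by the characterizing identity for $r_j$), is the technical heart of the first part; once this is in place, the passage to the Gaussian process limit in the second part follows from standard continuity arguments.
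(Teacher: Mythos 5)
For the first half of the lemma your plan coincides with the paper's: identify $g^{*}-f^{*}$ with the kernels $D_{Jj,\delta}$, note the diagonal entries are identically one, and show each off-diagonal covariance $z_{1-\tau}^{-2}\sum_i D_{Jj,z_{1-\tau}}(x_i)D_{J\ell,z_{1-\tau}}(x_i)$ converges to a $\tau$-free limit via an anisotropic change of variables plus the modulus asymptotics of Theorem \ref{thm:rate_result} (whose exponent identity is exactly what cancels the $\delta=z_{1-\tau}$ factors). Two cautions, though. Your displayed ``universal'' limit $p_X(0)\int \tilde D_j\tilde D_\ell$ only makes sense when $\gamma_j=\gamma_\ell$, since otherwise there is no common rescaling; the paper treats the unequal-exponent case separately and shows the covariance tends to $0$. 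And your heuristic for that case --- rescale by the smaller bandwidth and argue the wider-support kernel sits at its peak on the overlap --- misdescribes the geometry: because the smoother class has bandwidth $b_{Jj}^{1/\gamma_j}$ in the $\mathcal{V}$-negative directions and $b_{Jj}^{1/\gamma_J}$ in the positive ones, the two supports are not nested (the smoother kernel is \emph{wider} in the negative directions, narrower in the positive ones), so neither kernel is near its maximum on the other's support. The paper instead tracks the four bandwidth ratios $b_{J\ell,\delta}^{1/\gamma_\ell}b_{Jj,\delta}^{-1/\gamma_j}$, etc., shows the rescaled integral is $o(1)$ by dominated convergence, and couples this with a rate comparison $r_\ell<r_j$; your sketch would need to be redone along these lines, as you yourself flag this as the unresolved technical heart.

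For the second half you take a genuinely different route: rather than proving weak convergence of the pre-limit process $\sum_i Z_{ni}(C)$ in $\ell^\infty([C_1,C_J])$ (the paper uses Theorem 2.11.1/Example 2.11.13 of van der Vaart and Wellner, verifying $\sum_i\lvert\partial_C D_{C,n,\delta}(x_i)\rvert^2=O(1)$ and a Lindeberg condition), you construct the limiting Gaussian process directly from the limiting covariance kernel, prove path continuity from a H\"older bound on the variance increments, and invoke Borell--TIS/Dudley to get a $J$-free quantile $t$. That part is legitimate, but the transfer back to finite $n$ ``via the first part of the lemma'' is where the approaches diverge in strength: finite-dimensional convergence gives, for each fixed $J$, that $\p(\max_j\{V_j(\eta),W_j(\eta)\}>z_{1-\eta})\leq\alpha$ only for $n\geq n(J)$, so your ``large $n$'' threshold depends on $J$. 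The paper's process-level convergence bounds the finite-$n$ maximum by the finite-$n$ supremum pathwise and then passes to the limit once, yielding a threshold uniform over all grids in $[C_1,C_J]$ --- which is what the surrounding discussion (the constant not growing as $J\to\infty$) really wants. To close this gap you would need asymptotic equicontinuity of the pre-limit process in $C$ (essentially the paper's derivative-square-sum bound and Lindeberg check), not just continuity of the limit; as written, that ingredient is missing. You would also need to carry out the promised H\"older estimate on the limiting kernel (the $C$-dependence enters both through the normalizing constant $c_C^{\ast}$ and the argument scaling), which is the limit analogue of that same derivative bound.
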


\begin{remark}[Dependence on $J$]
  The proof reveals that when all $J$ parameter spaces correspond to different
  H\"older exponents (i.e., $\gamma_{1} > \cdots > \gamma_{J}$), the dependence
  of $\tau^{\ast}$ on $J$ does not vanish and in fact results in CIs whose
  constants grow at the same rate as the naive Bonferroni CI, $(\log
  J)^{1/2}$. However, some finite sample improvement in terms of the length of
  the resulting CI compared to the naive Bonferroni CI is shown in the empirical
  exercise. When some of the parameter spaces have the same H\"older exponent,
  the improvement can be significant. As an extreme case, when
  $\gamma_{1}= \cdots = \gamma_{J}$, $\tau^{\ast}$ can is bounded away from $0$
  by a constant that does not depend on $J$, which is exactly what the second
  part of the lemma asserts.
\end{remark}






\section{Simulation Results}
\label{sec:Simulation}

In this section, we compare the performances of the adaptive CI of
\citet{cai2004adaptation} and the adaptive CI constructed using the naive Bonferroni procedure described in Section \ref{sec:constr-adapt-cis}. As a benchmark, we also provide the lengths
of the shortest fixed length confidence intervals of \citet{Donoho1994}, referred to as minimax CIs. We consider inference for $f(0)$, given some regression function $f$. We consider the case where the researcher is uncertain about the value of the H\"older exponent $\gamma$, and thus tries to adapt to its value.

First, we construct adaptive CIs with respect to two smoothness parameters
$\left(\gamma_{1},\gamma_{2}\right)=\left(1,10^{-3}\right)$ while fixing $C = 1$, which gives $J = 2$.  We vary $n$ over $\{10^2,\ 5 \times 10^2,\ 10^3,\ 5 \times 10^3,\ 10^4 \}$
to investigate the rate of adaptation as the sample size grows. The true
regression function is over $\mathbb{R}^2$ and given by either $f_{1}$ or $f_{2}$, defined as
\begin{align*}
  f_{1}(x_{1},x_{2})  =  0, \quad
  f_{2}(x_{1},x_{2})  =  \left\lVert \left(x_{1},x_{2}\right)_{\mathcal{V}+}\right\rVert _{2}^{\gamma_{2}},
  \quad
  \mathcal{V}=\{1,2\}.
\end{align*}
By construction, we have $f_{j}\in\Lambda_{+,\mathcal{V}}(\gamma_{j},1)$. The covariates are drawn from a uniform distribution over $[-1/(2\sqrt{2}),
1/(2\sqrt{2})]^{2}$, and the noise terms, $\{u_{i}\}_{i=1}^{n}$, are drawn
from a standard normal distribution. The outcome variable is given as
$y_{i} = f(x_{i}) + u_{i}$, for $f \in \{f_1, f_2\}$. We fix the draw of
$\left\{ x_{i} \right\}_{i=1}^{n}$ within each simulation iteration. We run 500 iterations to calculate the average lengths and coverage probabilities of CIs. The nominal coverage probability is $.95$ for
all CIs. 

\begin{table}
  \centering{}
  \caption{Lengths of CIs when $f=f_{1}$ with $J = 2$
  \label{tab:Lengths-of-CIs_f1}}
  \begin{tabular}{@{\extracolsep{5pt}} lcccc} \\[-1.8ex]\hline
    \hline \\[-1.8ex] & AdaptBonf & CL & Minimax ($\gamma = \gamma_2$) & Minimax ($\gamma = \gamma_1$) \\
    \hline \\[-1.8ex] n = 100 & $0.925$  & $5.092$ & $1.459$ & $0.617$
    \\ n = 500 &  $0.478$ & $1.857$ & $1.212$ & $0.391$ \\
    n = 1,000 & $0.382$  & $1.530$ & $1.150$ & $0.329$ \\ n = 5,000 &  $0.256$          & $1.037$  & $1.064$ & $0.222$ \\ n =
    10,000 & $0.212$ & $0.868$  & $1.045$ & $0.187$ \\
    \hline \\[-1.8ex]
  \end{tabular}
\end{table}

\begin{table}
  \caption{Lengths of CIs when $f=f_{2}$ with $J = 2$ 
  \label{tab:Lengths-of-CIs_f2}}
  \centering{}
  \begin{tabular}{@{\extracolsep{5pt}} lccc} \\[-1.8ex]\hline \hline \\[-1.8ex]
    & AdaptBonf  & CL  & Minimax ($\gamma = \gamma_2$) \\ \hline \\[-1.8ex] n = 100 &
    $1.599$ & $5.092$ &  $1.459$ \\ n = 500 &  
    $1.279$ & $3.504$ & $1.212$ \\ n = 1,000 &   $1.197$ & $3.222$ & $1.150$ \\ n =
    5,000 &   $1.086$ & $2.860$ & $1.064$ \\ n = 1,0000 & 
    $1.060$  & $2.703$ & $1.045$ \\ \hline
    \\[-1.8ex]
  \end{tabular}
\end{table}

\begin{table}
  \caption{Coverage probabilities of adaptive CIs ($J = 2$)\label{tab:CovPRob}}
  \centering{}
  \begin{tabular}{@{\extracolsep{5pt}} lcccccc} \\[-1.8ex]\hline \hline
    \\[-1.8ex] & \multicolumn{2}{c}{$f = f_1$} & \multicolumn{2}{c}{$f = f_2$} \\
    \hline \\[-1.8ex] & AdaptBonf & CL  & AdaptBonf  & CL  \\
    \hline \\[-1.8ex] n = 100  & $0.988$ & $1.000$   & $0.992$ & $1.000$
    \\ n = 500  &  $0.972$ & $1.000$  & $0.976$ & $0.998$ \\ n =
    1,000   & $0.970$ & $1.000$  & $0.982$ & $1.000$ \\ n = 5,000  & $0.972$ & $1.000$  &  $0.970$ & $1.000$ \\ n = 10,000  & $0.974$ & $1.000$  & $0.976$ & $1.000$ \\
    \hline \\[-1.8ex]
  \end{tabular}
\end{table}

Table
\ref{tab:Lengths-of-CIs_f1} shows the results for the case where $f=f_{1}$. Each column corresponds to 1) our proposed (naive) Bonferroni adaptive procedure (AdaptBonf), 2) the adaptive CI of \cite{cai2004adaptation} (CL, henceforth), 3) the minimax CI with respect to $\Lambda_{+,\mathcal{V}}(\gamma_2, 1)$, and 4) the minimax CI with respect to $\Lambda_{+,\mathcal{V}}(\gamma_1, 1)$. Regarding the last two minimax procedures, we refer to them as the ``conservative minimax CI'' and the ``oracle minimax CI'', respectively. Note that the oracle minimax CI is an optimal benchmark, which is only feasible when we actually know the true regression function is in the smaller parameter space $\Lambda_{+,\mathcal{V}}(\gamma_1, 1)$. In Table \ref{tab:Lengths-of-CIs_f1}, the average lengths of both adaptive confidence intervals decrease considerably as $n$ increases from 100 to 10,000. In comparison, the length
of the conservative minimax CI (column 3) decreases only about 28\% for the same change in the
sample size. This shows the lengths of the adaptive confidence intervals decrease more sharply when the true function is smooth, as predicted by the theory.

To compare the performances of different adaptive inference procedures, note that the average lengths of the CI of CL adapting to the H\"older exponents (column 2) are often wider than the conservative minimax CI (column 3). When $n = 100$, the former is more than three times
wider than the latter, and the adaptive procedure starts to dominate the minimax procedure only when $n$ is greater than 5,000. In comparison, our proposed
Bonferroni adaptive procedure (column 1) yields shorter CIs than those by CL, as
predicted in Section \ref{sec:constr-adapt-cis}.
To compare the Bonferroni adaptive CI with the conservative minimax CI, the lengths of the former are always
exceeded by those of the minimax CI, even for the relatively small sample size of $n=100$. Moreover, the length of the adaptive CI becomes only 20\% of the length of the conservative minimax CI for the sample size of $n=10^4$. 
The Bonferroni procedure also performs well even when compared to the infeasible oracle minimax CI (column 4), with the length of the former only 13\% wider than the latter when $n = 10^4$. This demonstrates the strong adaptivity property of the adaptive procedure when the regression function is monotone with respect to all variables, as shown in Section \ref{sec:Asymptotic-rate-results}.

Table \ref{tab:Lengths-of-CIs_f2} demonstrates the analogous simulation results when $f=f_{2}$. In this case, the minimax CI with respect to $\Lambda_{+,\mathcal{V}}(\gamma_2, 1)$ (column 3) is referred to as the oracle minimax CI. While
the lengths of the oracle minimax procedure are considerably shorter than the CIs of CL for
various values of $n$, the performance of the Bonferroni CIs almost matches that of
the oracle minimax procedure. Especially, the performance of the Bonferroni adaptive 
procedure becomes extremely close to the oracle minimax procedure when $n$ is greater than 500.

Table \ref{tab:CovPRob} shows the coverage probabilities of
adaptive CIs for both of the cases when $f = f_1$ and $f = f_2$. While all the CIs achieve the correct coverage, none of those CIs exactly achieves the nominal coverage of $.95$, reflecting the conservative nature of the adaptive CIs. We can see that the adaptive procedure of CL is particularly conservative, almost always yielding 100\% coverage probabilities.

So far we considered adapting to the smoothness parameters at two extremes,
$\gamma\in(0.001,1)$. Since the multiplicative constant for the Bonferroni procedure increases with $J$, a concern is that the performance of the Bonferroni procedure relative to the CL procedure might get worse when $J$ is larger. To investigate the possibility, we consider adapting to a wider set of parameters, $\{\gamma_j\}_{j = 1}^6$, where $\gamma_j = 1 - (j - 1)/5$ for $j = 1,...,5$ and $\gamma_6 = 10^{-3}$.  Moreover, rather than taking the extreme value of $\gamma$ as the true parameter, we consider the case where $\gamma$ takes an intermediate value, $\gamma=1/2$. The true regression function is given by
\[
  f_3(x_{1},x_{2})=\left\lVert \left(x_{1},x_{2}\right)_{\mathcal{V}+}\right\rVert _{2}^{1/2},
  \quad
  \mathcal{V} = \{1,2\},
\]
so that $f_3 \in\Lambda_{+,\mathcal{V}}(1/2,1)$. 

Table \ref{tab:Lengths-of-CIs-MultiGam_Full} displays the simulation results corresponding to this specification. Each column corresponds to 1) our proposed Bonferroni adaptive procedure, 2) the adaptive CI of CL, 3) the minimax CI with respect to $\Lambda_{+,\mathcal{V}}(\gamma_6, 1)$, and 4) the minimax CI with respect to $\Lambda_{+,\mathcal{V}}(1/2, 1)$. As before, we refer to the last two CIs as the conservative minimax CI and the oracle minimax CI, respectively. We observe the same pattern as in the case of adapting to two parameters---adaptive CIs shrink faster than the conservative minimax CI as the sample size increases, and the Bonferroni adaptive CIs are shorter than the ones of CL. While the ratio of the length of the Bonferroni CI to that of the CI of CL is larger in this case compared to the case where $J = 2$, especially when $n$ is large, the Bonferroni CI is still more than 50 \% narrower than the CI of CL, and not much wider than the oracle minimax CI. 

\begin{table}
  \caption{Lengths of CIs when $f=f_{3}$ with $J = 6$
  \label{tab:Lengths-of-CIs-MultiGam_Full}}
  \centering{}
  \begin{tabular}{@{\extracolsep{5pt}} lcccc} \\[-1.8ex]\hline \hline
    \\[-1.8ex] & AdaptBonf  & CL   & Minimax ($\gamma = \gamma_6$) & Minimax ($\gamma = 0.5$) \\
    \hline \\[-1.8ex] n = 100 &  $1.495$  & $5.092$ & $1.459$ & $0.908$ \\
    n =
    500  & $0.972$ & $2.417$ &  $1.212$ & $0.649$ \\
    n = 1,000  & $0.833$  & $2.441$ & $1.150$ & $0.580$ \\ n =
    5,000  & $0.615$ & $1.816$ & $1.064$ & $0.447$ \\ n =
    10,000  & $0.543$ & $1.145$ &  $1.045$ & $0.400$ \\
    \hline \\[-1.8ex]
  \end{tabular}
\end{table}

\section{Empirical Illustration}
\label{sec:empir-illustr}
In this section, we apply our procedure to the production function estimation
problem for the Chinese chemical industry. Specifically, we use the firm-level data of
\citet{jacho2010identification} for the year
2001, which was also used by \citet{horowitz2017nonparametric} to illustrate
their method of constructing the uniform confidence band for the production
function under shape restrictions.

In the dataset, the dependent variable is the logarithm of value-added real
output ($y$), and the explanatory variables are the logarithms of the net value
of the real fixed asset ($k$) and the number of employees ($\ell$). After
removing the outliers for $y,k$ and $\ell$, the remaining sample size was
$n=1,636$.\footnote{We used the conventional way of outlier detection, removing
  the observations that are greater than the third quantile plus IQR times 1.5,
  or less than the first quantile minus IQR times 1.5. Our resulting sample size
  is close to \citet{horowitz2017nonparametric}, who have $n=1,638$. } Table
\ref{tab:Summary-statistics:-Chinese} shows the brief summary of the variables
used in our analysis. We are interested in construction of the confidence
interval for
$f(k_{0},\ell_{0}) := \E \left[y|k=k_{0},\ell=\ell_{0}\right]$. We
take $\left(k_{0},\ell_{0}\right)$ to be medians of each variable.

\begin{table}[t]
    \caption{Summary statistics - Chinese chemical industry dataset for the year
    2001}
    \label{tab:Summary-statistics:-Chinese}
  \centering
  \begin{tabular}{@{\extracolsep{5pt}} lcccc}
    \\[-1.8ex]\hline \hline \\[-1.8ex] & Min & Mean & Median & Max \\
    \hline
    \\[-1.8ex]
    Log output & $6.472$ & $9.952$ & $9.937$ & $13.233$
    \\
    Log fixed asset & $7.463$ & $10.818$ & $10.759$ &
                                                      $14.226$ \\ Log labor & $3.664$ & $6.352$ & $6.386$ & $9.142$ \\
    \hline \\[-1.8ex]
  \end{tabular}
\end{table}

The first step is to estimate the variance of the error term. We assume homoskedastic errors for simplicity. The variance estimator is defined as
\[
  \hat{\sigma}^{2}=\frac{\sum_{i=1}^{n}\left(y_{i}-\hat{r}(k_{i},\ell_{i})\right)}{n-2\nu_{1}+\nu_{2}},
\]
where $\hat{r}(k_{i},\ell_{i})$ is the estimator for the conditional mean using
kernel regression, $\nu_{1}=\text{tr}(L)$, $\nu_{2}=\text{tr}(L'L)$, where $L$ is
the weight matrix for the kernel estimator. Refer to \citet{wassermann2006all}
for a justification for this variance estimator. We used the Gaussian kernel with the bandwidth chosen by
expected Kullback-Leibler cross validation as in \citet{hurvich1998smoothing}.

For the function space, we consider adapting to a sequence of parameter
spaces $\{\Lambda_{+,\mathcal{V}}(\gamma_{j},C) \}_{j = 1}^6$  with $\gamma_j = 1 - (j - 1)/5$ for $j = 1,...,5$ and $\gamma_6 = 10^{-3}$. We take $\mathcal{V}=\{1,2\}$, assuming that the
production function is nondecreasing in both fixed assets and labor, which is
consistent with economic theory. To make $\Lambda_{+,\mathcal{V}}(\gamma_{j},C) \subset \Lambda_{+,\mathcal{V}}(\gamma_{6},C)$ hold for all $j = 1,...,5$, we only use observations in a restricted support, and the effective sample size is given by $n_{\text{eff}} = 272$.
 
For the norm, we use the Euclidean norm weighted by the inverse of the standard
deviation of each input,
$\left\lVert (k,\ell)\right\rVert
=(({k}/{s_{k}})^{2}+({\ell}/{s_{\ell}})^{2})^{1/2}$ where $s_{k}$ and
  $s_{\ell}$ are standard deviations of $k$ and $\ell$, respectively. We take
  conservative values of $C$ by setting
\[
  C=2\times\underset{(i,j)\in\left\{ 1,...,n_{\text{eff}}\right\}
    ^{2}}{\max }\frac{\left\lvert y_{j}-y_{i}\right\rvert }{\left\lVert (k_{j},\ell_{j})-(k_{i},\ell_{i})\right\rVert ^{\gamma_{6}}}.
\]

\begin{table}[t!]
  \caption{95\% confidence intervals for $f(k_{0},\ell_{0})$}
  \label{tab:CI_empirical}
  \centering
    \begin{tabular}{@{\extracolsep{5pt}} lcc}
      \hline
      \hline
      \\[-1.8ex]
      & CI & Length \\
      \hline \\[-1.8ex]
      Minimax ($\gamma = \gamma_6$) & [7.501, 13.005] & $5.504$
      \\
      Minimax  ($\gamma = \gamma_1$)
      & [9.922, 10.484] & $0.562$ \\
      AdaptBonf (Naive) & \textbf{[9.766, 11.264]}
      & $\mathbf{1.498}$ \\
      AdaptBonf (Calibrated) &
        \textbf{[9.864, 11.188]} & $\mathbf{1.324}$ \\
      Cai and Low & [7.134, 12.049] & $4.915$  \\
      \hline
      \\[-1.8ex] 
      \end{tabular}
\end{table}

We compare different procedures to construct CIs. The methods in
comparison are the minimax CI with respect to the largest space $\Lambda_{+,\mathcal{V}}(\gamma_{6},C)$ (row 1), the restricted minimax CI with respect to the smallest space $\Lambda_{+,\mathcal{V}}(\gamma_{1},C)$ (row 2), the adaptive Bonferroni
CI adapting to $\{\gamma_j\}_{j = 1}^6$ (row 3), the same adaptive CI, but
taking into account the correlations between different CIs (fourth row), and the adaptive CI of \citet{cai2004adaptation} (henceforth CL) adapting to $\{\gamma_j\}_{j = 1}^6$ (fifth row). Note that all the CIs maintain correct coverage over the largest space $\Lambda_{+,\mathcal{V}}(\gamma_{6},C)$, except for the second one, which is valid only over the smallest space $\Lambda_{+,\mathcal{V}}(\gamma_{1},C)$. We refer to the first minimax CI as the conservative minimax CI.

Table \ref{tab:CI_empirical} demonstrates the 95\% confidence intervals for
$f(k_{0},\ell_{0})$ produced by different inference methods. 
First of all, the lengths of the
adaptive Bonferroni CIs are much shorter than the conservative minimax CI, while the
procedure of CL yields a wider CI, almost as long as the conservative minimax
CI. We can also observe that the adaptive Bonferroni CI using the calibrated
value of $\tau^*$ (fourth row) is relatively narrower than its naive version
taking $\tau = 0.05/2J$ (third row). Lastly, while the length of the second
minimax procedure (second row) is the shortest, it is only valid when we are confident that the true regression function is in the smallest function space we consider, $ \Lambda_{+,\mathcal{V}}(\gamma_{1},C)$. Together with the simulation results in the previous section, our empirical analysis demonstrates the advantage of using an adaptive procedure when the monotonicity restriction is plausible as well as good finite sample performance of our proposed Bonferroni adaptive procedure.

\newpage

\bibliography{KK_rfap_Nov262020}

\newpage

\begin{appendices}

  \section{Proofs of Lemmas}
\label{appdx-sec:Proofs_lemmas}
  \subsection{Proof of Corollary \ref{cor:cor-AK-ub}}
  \begin{proof}
    Suppose $\hat{c}_{\alpha,-}^{\ell,j}$ solves
    \begin{equation}
      \min_{\hat{c}:[\hat{c},\infty)\in\mathcal{I}_{\alpha,1,-}^{J}}\sup_{f\in\mathcal{F}_{j}}q_{\beta,f}(-Lf-\hat{c}),\label{eq:one-sd-optim-1}
    \end{equation}
    where $\mathcal{I}_{\alpha,1,-}^{J}$ denotes the set of one-sided CIs that
    covers $-Lf$ with probability at least $1-\alpha$ over
    $\mathcal{F}_{J}$. Then, taking
    $\hat{c}_{\alpha}^{u,J}=-\hat{c}_{\alpha,-}^{\ell,J}$, we have
    $\left(-\infty,\hat{c}_{\alpha}^{u,J}\right]\in\mathcal{I}_{\alpha,1}^{J}$
    and $\hat{c}_{\alpha}^{u,J}$ solves (\ref{eq:one-sd-optim-up}).  Applying
    Theorem 3.1 of \citet{armstrong2018optimal} with $\widetilde{L}f=-Lf$, we get
    the desired result.
  \end{proof}

  \subsection{Proof of Theorem \ref{thm:BonfCI}}
  \begin{proof}
    Consider the CI
    $[\hat{c}_{\alpha/2J}^{u,j},\hat{c}_{\alpha/2J}^{\ell,j}]$, and
    observe that
    \[
      \E [\hat{c}_{\alpha/2J}^{u,j}-\hat{c}_{\alpha/2J}^{\ell,j}]=\E [\hat{c}_{\alpha/2J}^{u,j}-Lf]+\E [Lf-\hat{c}_{\alpha/2J}^{\ell,j}],
    \]
    for any $f\in\mathcal{F}_{j}$. Then, by (\ref{eq:exp_ex_len}) and
    (\ref{eq:exp_ex_len_U}), we have
    \begin{align*}
      & \sup_{f\in\mathcal{F}_{j}}\E [\hat{c}_{\alpha/2J}^{u,j}-\hat{c}_{\alpha/2J}^{\ell,j}]\leq\omega(z_{1-\frac{\alpha}{2J}},\mathcal{F}_{J},\mathcal{F}_{j})+\omega(z_{1-\frac{\alpha}{2J}},\mathcal{F}_{j},\mathcal{F}_{J})\leq2\omega_{+}(z_{1-\frac{\alpha}{2J}},\mathcal{F}_{j},\mathcal{F}_{J}).
    \end{align*}
It follows that
\begin{align*}
  \sup_{f\in\mathcal{F}_{j}}\E \mu(CI_{\alpha}^{Bon,J}) & =\sup_{f\in\mathcal{F}_{j}}\E [\min _{j}\hat{c}_{\alpha/2J}^{u,j}-\max _{j}\hat{c}_{\alpha/2J}^{\ell,j}]\\
                                                      & \leq\sup_{f\in\mathcal{F}_{j}}\E [\hat{c}_{\alpha/2J}^{u,j}-\hat{c}_{\alpha/2J}^{\ell,j}]\\
                                                      & \leq2\omega_{+}(z_{1-\frac{\alpha}{2J}},\mathcal{F}_{j},\mathcal{F}_{J})
\end{align*}
for any $j=1,...,J$. Noting that
\[
  2\omega_{+}(z_{1-\frac{\alpha}{2J}},\mathcal{F}_{j},\mathcal{F}_{J})\leq\frac{2z_{1-\frac{\alpha}{2J}}}{z_{1-\frac{\alpha}{2}}}\omega_{+}(z_{1-\frac{\alpha}{2}},\mathcal{F}_{j},\mathcal{F}_{J}),
\]
which follows from the concavity of the ordered modulus of continuity, we obtain
the desired result.
\end{proof}

\subsection{Proof of Lemma \ref{lem:Bonf-improv}}
\label{sec:proof-lemma}
\begin{proof}
  First, note that we can write
  \begin{align*}
    & \p \left(Lf<\hat{c}_{\tau}^{L,j}\right)\\
    = & \p \left(\hat{c}_{\tau}^{L,j}-Lf>0\right)\\
    = & \p \left(\frac{\hat{c}_{\tau}^{L,j}-Lf}{\omega'\left(z_{1-\tau},\Lambda_{+,\mathcal{V}}\left(\gamma,C_{J}\right),\Lambda_{+,\mathcal{V}}\left(\gamma,C_{j}\right)\right)}>0\right)\\
    = & \p \left(\frac{\hat{c}_{\tau}^{L,j}-Lf}{\omega'\left(z_{1-\tau},\Lambda_{+,\mathcal{V}}\left(\gamma,C_{J}\right),\Lambda_{+,\mathcal{V}}\left(\gamma,C_{j}\right)\right)}+z_{1-\tau}>z_{1-\tau}\right)\\
    \equiv & \p (\widetilde{V_{j}}(\tau)>z_{1-\tau}).
  \end{align*}
  Likewise, we can write
  \begin{align*}
    & \p \left(Lf>\hat{c}_{\tau}^{U,j}\right)\\
    = & \p \left(\frac{Lf-\hat{c}_{\tau}^{U,j}}{\omega'\left(z_{1-\tau},\Lambda_{+,\mathcal{V}}\left(\gamma,C_{j}\right),\Lambda_{+,\mathcal{V}}\left(\gamma,C_{J}\right)\right)}+z_{1-\tau}\geq z_{1-\tau}\right)\\
    \equiv & \p (\widetilde{W_{j}}(\tau)>z_{1-\tau}).
  \end{align*}
  Therefore, writing
  $\widetilde{V}(\tau)=\left(\widetilde{V_{1}}(\tau),...,\widetilde{V_{J}}(\tau)\right)'$
  and similarly for $\widetilde{W}(\tau)$, we have
  \begin{equation*}
  \p (Lf\notin CI_{\tau}^{\mathcal{J}})=\p (\max \{
      \widetilde{V}\left(\tau\right)',\widetilde{W}\left(\tau\right)'\}
    >z_{1-\tau}).
  \end{equation*}

  Now, we want to find an upper bound on
  \[
    \sup_{f\in\mathcal{F}_{J}}\p (\max \{
        \widetilde{V}(\tau)',\widetilde{W}(\tau)'\}
      >z_{1-\tau}).
  \]
  Note that the quantile of
  $\max \{ \widetilde{V}(\tau)',\widetilde{W}(\tau)'\}
  $ is increasing in the mean of each $\widetilde{V_{j}}(\tau)$'s and
  $\widetilde{W_{j}}(\tau)$'s.  Moreover, the variances and covariances of
  $(\widetilde{V}(\tau)',\widetilde{W}(\tau)')'$ do not
  depend on the true regression function $f$, by the construction of
  $\hat{c}_{\tau}^{L,j}$ and $\hat{c}_{\tau}^{U,j}$. Therefore, it is useful to
  consider $\sup_{f\in \mathcal{F}_{J}}\E \widetilde{V_{j}}(\tau)$ and
  $\sup_{f\in \mathcal{F}_{J}}\E \widetilde{W_{j}}(\tau)$.  Actually, Lemma A.1 in AK
  can be used to show
  \[
    \sup_{f\in\mathcal{F}_{J}}\E \widetilde{V_{j}}(\tau)=\sup_{f\in\mathcal{F}_{J}}\E \widetilde{W_{j}}(\tau)=0.
  \]
  Moreover, it is straightforward to show that the variance matrix of
  $(\widetilde{V}(\tau)',\widetilde{W}(\tau)')'$ is given by the formula in the
  statement of Lemma \ref{lem:Bonf-improv}.  Therefore, we have
  \begin{equation*}
    \sup_{f\in\mathcal{F}_{J}}\p (\max \{ \widetilde{V}(\tau)',\widetilde{W}(\tau)'\} >z_{1-\tau})\\
    \leq  \p (\max \{ V(\tau)',W(\tau)'\} >z_{1-\tau}),
  \end{equation*}
  and by setting $\tau^{*}$ so that the latter term becomes $\alpha$, we get the
  desired result.
\end{proof}

\subsection{Proof of Lemma \ref{lem:h_plus}}
\begin{proof}
  First, we note that
  $h(x)\equiv C\left\lVert x\right\rVert ^{\gamma}$
  satisfies the H\"older continuity condition. This is because for any
  $x,z\in\mathbb{R}^{k}$, such that (without loss of generality)
  $\left\lVert x\right\rVert \geq \left\lVert z \right\rVert $, we have
  \begin{equation*}
    \left\lvert h(x)-h(z)\right\rvert   = C\left(\left\lVert x\right\rVert ^{\gamma}-\left\lVert z\right\rVert ^{\gamma}\right)
                                        \leq  C\left\lVert x-z\right\rVert ^{\gamma}.
  \end{equation*}
  The inequality holds because we have
  \[
    \left\lVert x\right\rVert
    ^{\gamma}\leq\left(\left\lVert x-z\right\rVert
      +\left\lVert z\right\rVert \right)^{\gamma},
  \]
  by the triangle inequality, and thus
  \[
    \left\lVert x-z\right\rVert +\left\lVert z\right\rVert \leq\left(\left\lVert
          x-z\right\rVert ^{\gamma}+\left\lVert
          z\right\rVert ^{\gamma}\right)^{1/\gamma},
  \]
  using the fact that $\gamma\in(0,1]$

  Next, we show that
  $h_{+}(x)\equiv C\lVert
      \left(x\right)_{\mathcal{V}+}\rVert ^{\gamma}$ also
  satisfies H\"older continuity. For $x,z\in\mathbb{R}^{k}$, define
  $\widetilde{x}=(x)_{\mathcal{V}+}$ and $\widetilde{z}=(z)_{\mathcal{V}_{+}}$.  Then,
  we can see that
  $\lVert x-z \rVert\geq\left\lVert \widetilde{x}-\widetilde{z}\right\rVert ,$ since
  $\left\lvert x_{m}-z_{m}\right\rvert \geq\left\lvert
    \widetilde{x}_{m}-\widetilde{z}_{m}\right\rvert $ for $m\in\mathcal{V}$ and
  $\left\lvert x_{m}-z_{m}\right\rvert =\left\lvert
    \widetilde{x}_{m}-\widetilde{z}_{m}\right\rvert $ otherwise. Therefore, for any
  $x,z\in\mathbb{R}^{k}$ with $x\neq z$, we have
  \[
    \frac{\left\lvert h_{+}(x)-h_{+}(z)\right\rvert
    }{\lVert x-z \rVert^{\gamma}}=\frac{\left\lvert h(\widetilde{x})-h(\widetilde{z})\right\rvert
    }{\lVert x-z \rVert^{\gamma}}\leq\frac{\left\lvert
        h(\widetilde{x})-h(\widetilde{z})\right\rvert
    }{\lVert \widetilde{x}-\widetilde{z} \rVert^{\gamma}}\leq C,
  \]
  where the last inequality follows from the H\"older continuity of $h$.

  Lastly, for monotonicity, note that for any $x,z\in\mathbb{R}^{k}$ such that
  $z_{i}\geq x_{i}$ for some $i\in\mathcal{V}$ and $z_{j}=x_{j}$ for all
  $j\neq i$, we have
  $\left\lvert \widetilde{z}_{i}\right\rvert \geq\left\lvert
    \widetilde{x}_{i}\right\rvert $.  Therefore, we have $h_{+}(z)\geq h_{+}(x)$.

  For $h_{-}(x)$, note that $h_{-}(x)=-h_{+}(-x)$. So H\"older continuity and
  monotonicity follows.
\end{proof}

\subsection{Proof of Lemma \ref{lem:holder_max_holder}}
\begin{proof}
  First of all, monotonicity easily follows from the monotonicity of each
  $h_{1}$ and $h_{2}$. For the H\"older continuity, fix some
  $x,z\in\mathbb{R}^{k}$, and suppose $h_{1}(x)\geq h_{2}(x)$ without loss of
  generality. Then, we have
  \[
    \left\lvert \max \left\{ h_{1}(x),h_{2}(x)\right\} -\max \left\{
        h_{1}(z),h_{2}(z)\right\} \right\rvert =\begin{cases}
      \left\lvert h_{1}(x)-h_{1}(z)\right\rvert  & \text{if }h_{1}(z)\geq h_{2}(z)\\
      \left\lvert h_{1}(x)-h_{2}(z)\right\rvert & \text{if }h_{1}(z)<h_{2}(z).
    \end{cases}
  \]
  For the former case,
  $\left\lvert h_{1}(x)-h_{1}(z)\right\rvert \leq C\lVert x-z \rVert^{\gamma}$.  For the
  latter case, note that if $h_{1}(x)\geq h_{2}(x)$
  \[
    \left\lvert h_{1}(x)-h_{2}(z)\right\rvert <\left\lvert
      h_{1}(x)-h_{1}(z)\right\rvert \leq C\lVert x-z \rVert^{\gamma}.
  \]
  Moreover, if $h_{1}(x)<h_{2}(x)$, we have
  \[
    \left\lvert h_{1}(x)-h_{2}(z)\right\rvert <\left\lvert
      h_{2}(x)-h_{2}(z)\right\rvert \leq C\lVert x-z \rVert^{\gamma},
  \]
  which proves our claim.
\end{proof}

\subsection{Proof of Lemma \ref{lem:InvModMax}}
\begin{proof}
  We only prove the claim about the maximum, since the proof for the minimum is
  analogous. First, note that due to Lemma \ref{lem:h_plus},
  $f^{*}(x)=f_{0}+C\left\lVert (x)_{\mathcal{V}+}\right\rVert ^{\gamma}$ is in
  $\Lambda_{+,\mathcal{V}}^{f_{0}}\left(\gamma,C\right)$. Now, for some
  $x\in\mathbb{R}^{k}$, suppose there exists some
  $f^{\dagger}\in\Lambda_{+,\mathcal{V}}^{c}\left(\gamma,C\right)$ such that
  $f^{\dagger}(x)>f^{*}(x)$. Then, we have
  \begin{align*}
    f^{\dagger}(x)-f^{\dagger}(0)  >  f^{*}(x)-f^{*}(0)                                  = & C\left\lVert (x)_{\mathcal{V}+}\right\rVert ^{\gamma}.
  \end{align*}
  Define $z=\left(z_{1},...,z_{k}\right)$ such that
  \begin{equation*}
    z_{i}=\begin{cases}
      \max \left\{ 0,x_{i}\right\}  & \text{if }i\in\mathcal{V}\\
      x_{i} & \text{otherwise.}
    \end{cases}
  \end{equation*}
  Then, we have $z_{i}\geq x_{i}$ for all $i\in\mathcal{V}$, so we must have
  $f^{\dagger}(z)\geq f^{\dagger}(x)$. Similarly, we also have
  $f^{\dagger}(z)\geq f^{\dagger}(0)=f_{0}$. Moreover, by definition of $z$, we
  have
  \[
\lVert (x)_{\mathcal{V}+}\rVert =\lVert
        (z)_{\mathcal{V}+}\rVert =\lVert z \rVert.
  \]
  Then, we can see that
  \begin{align*}
    \left\lvert f^{\dagger}(z)-f^{\dagger}(0)\right\rvert   = & f^{\dagger}(z)-f_{0}\\
                                                            \geq & f^{\dagger}(x)-f_{0}\\
                                                            > & C\left\lVert (x)_{\mathcal{V}+}\right\rVert ^{\gamma}\\
    = & C\lVert z \rVert^{\gamma},
  \end{align*}
  which violates H\"older continuity. Therefore, $f^{*}(x)$ attains the maximum.
\end{proof}

\subsection{Proof of Corollary \ref{cor:chat formula}}
\begin{proof}
  We first note that the function classes
  $\Lambda_{+,\mathcal{V}}(\gamma_{j},C_{j})$'s are \textit{translation invariant} as
  defined in \citet{armstrong2016optimal}.
  \begin{definition}
    For some linear functional $L$ on $\mathcal{F},$ the function class
    $\mathcal{F}$ is translation invariant if there exists a function
    $\iota\in\mathcal{F}$ such that $L\iota=1$ and $f+c\iota\in\mathcal{F}$ for
    all $c\in\mathbb{R}$ and $f\in\mathcal{F}$.
  \end{definition}
  In our case, by taking $\iota=1$, we can easily see that the function class
  $\mathcal{F}_{j}=\Lambda_{+,\mathcal{V}}(\gamma_{j},C_{j})$ satisfies
  translation invariance for our linear function $Lf=f(0)$ for all
  $j=1,...,J$. Let $f_{j,\delta}^{*}\in\mathcal{F}_{j}$ and
  $f_{J,\delta}^{*}\in\mathcal{F}_{J}$ solve the the modulus of continuity
  problem with respect to
  $\omega\left(\delta,\mathcal{F}_{J},\mathcal{F}_{j}\right)$.  Then, by Lemma
  B.3 in \citet{armstrong2016optimal}, we have
  \begin{equation}\label{eq:LemmaB.3_ArmKol}
    \omega'\left(\delta,\mathcal{F}_{J},\mathcal{F}_{j}\right)      = 
    \frac{\delta}{\sum_{i=1}^{n}\left(f_{j,\delta}^{*}(x_{i})-f_{J,\delta}^{*}(x_{i})\right)}.
  \end{equation}
  Therefore, we can rewrite $\hat{L}_{\delta}^{\ell,j}$ in (\ref{eq:lhat_lower})
  as
  \begin{equation*}
    \hat{L}_{\delta}^{\ell,j}  =  \frac{f_{j,\delta}^{*}(0)+f_{J,\delta}^{*}(0)}{2}+\frac{\sum_{i=1}^{n}\left(f_{j,\delta}^{*}(x_{i})-f_{J,\delta}^{*}(x_{i})\right)\left(y_{i}-\frac{f_{j,\delta}^{*}(x_{i})+f_{J,\delta}^{*}(x_{i})}{2}\right)}{\sum_{i=1}^{n}\left(f_{j,\delta}^{*}(x_{i})-f_{J,\delta}^{*}(x_{i})\right)}.
  \end{equation*}
  Next, using Corollary \ref{cor:InvModRes(21)}, we have
  \begin{align*}
    \hat{L}_{\delta}^{\ell,j}  = &
                                   \frac{\sum_{i=1}^{n}D_{Jj,\delta}\left(x_{i}\right)y_{i}}{\sum_{i=1}^{n}D_{Jj,\delta}\left(x_{i}\right)}+\frac{\omega\left(\delta,\mathcal{F}_{J},\mathcal{F}_{j}\right)}{2} \\
    &-\frac{\sum_{i=1}^{n}D_{Jj,\delta}\left(x_{i}\right)\left[\omega\left(\delta,\mathcal{F}_{J},\mathcal{F}_{j}\right)-C_{j}\left\lVert \left(x_{i}\right)_{\mathcal{V}-}\right\rVert ^{\gamma_{j}}\right]}{2\sum_{i=1}^{n}D_{Jj,\delta}\left(x_{i}\right)}\\
                                & -\frac{\sum_{i=1}^{n}D_{Jj,\delta}\left(x_{i}\right)\min \left\{ \omega\left(\delta,\mathcal{F}_{J},\mathcal{F}_{j}\right)-C_{j}\left\lVert \left(x_{i}\right)_{\mathcal{V}-}\right\rVert ^{\gamma_{j}},C_{J}\left\lVert \left(x_{i}\right)_{\mathcal{V}+}\right\rVert ^{\gamma_{J}}\right\} }{2\sum_{i=1}^{n}D_{Jj,\delta}\left(x_{i}\right)}.
  \end{align*}
  Noting that
  \begin{align*}
    & D_{Jj,\delta}(x_{i})\min \{ \omega(\delta,\mathcal{F}_{J},\mathcal{F}_{j})-C_{j}\lVert (x_{i})_{\mathcal{V}-}\rVert ^{\gamma_{j}},C_{J}\lVert (x_{i})_{\mathcal{V}+}\rVert ^{\gamma_{J}}\} \\
    = & D_{Jj,\delta}(x_{i})\min \{ \omega(\delta,\mathcal{F}_{J},\mathcal{F}_{j})-C_{j}\lVert (x_{i})_{\mathcal{V}-}\rVert ^{\gamma_{j}}-C_{J}\lVert (x_{i})_{\mathcal{V}+}\rVert ^{\gamma_{J}},0\} +C_{J}\lVert (x_{i})_{\mathcal{V}+}\rVert ^{\gamma_{J}}\\
    = & C_{J}\lVert (x_{i})_{\mathcal{V}+}\rVert ^{\gamma_{J}},
  \end{align*}
  by the definition of $D_{Jj,\delta}\left(x_{i}\right)$, we can rewrite the formula
  for $\hat{L}_{\delta}^{\ell,j}$ as in the statement of the corollary.  To get
  the lower end of the CI, we subtract from $\hat{L}_{\delta}^{\ell,j}$
  \begin{align*}
    & \frac{1}{2}(\omega(\delta,\mathcal{F}_{J},\mathcal{F}_{j})+\delta\omega'(\delta,\mathcal{F}_{J},\mathcal{F}_{j}))\\
    = & \frac{1}{2}\left(\omega(\delta,\mathcal{F}_{J},\mathcal{F}_{j})+\frac{\delta^{2}}{\sum_{i=1}^{n}D_{Jj,\delta}(x_{i})}\right),
  \end{align*}
  where the equality is from the equation (\ref{eq:LemmaB.3_ArmKol}).  The upper
  end of the CI can be derived in an analogous way, this time using Proposition
  \ref{prop:InvModRes}.
\end{proof}

\subsection{Proof of Lemma \ref{lem:tau}}
\label{sec:proof-lemma-refl}
\begin{proof}
  We first show that the limiting distribution is invariant with respect to
  $\tau$. For notational simplicity, we write $\delta = z_{1-\tau}$ and show
  invariance with respect to $\delta$. With some abuse of notation, we write
  $(V(\delta)', W(\delta)')'$ as this reparametrized version whose value is the
  same with $(V(\tau)', W(\tau)')'$ if $\delta = z_{1-\tau}$. Because
  $(V(\delta)', W(\delta)')'$ is centered and has unit variance, it suffices to
  show that the covariance terms converge to a limit that does not depend on
  $\delta$. We show that this is the case for the covariance terms of
  $V(\delta):= (V_{1}(\delta), \dots, V_{J}(\delta))'$. The same invariance for
  other covariance terms (covariance between elements of $W(\delta)$ and the
  covariance between an element of $V(\delta)$ and of $W(\delta)$) follows by an
  analogous calculation.

  Again, we consider the case where $\sigma(\cdot) =1$. However, this can be
  relaxed (with more notation) under mild regularity conditions given in
  Appendix \ref{apdx:Heteroskedasiticy}. Define
  $b_{Jj,\delta} := \omega\left(\delta,\mathcal{F}_{J},\mathcal{F}_{j}\right)$
  for $j \leq J$. Note that
  \begin{align}
    \label{eq:covV}
    \begin{aligned}
      \mathrm{cov}\left(V_{j}(\delta),V_{\ell}(\delta)\right)     = &\frac{\sum_{i=1}^{n}D_{Jj,\delta}(x_{i})D_{J\ell}(x_{i})}{\delta^{2}}\\
      = & \frac{\sum_{i=1}^{n}(D_{Jj,\delta}(x_{i})/b_{Jj,
          \delta})(D_{J\ell}(x_{i})/b_{Jj,
          \delta})}{\delta^{2}/(b_{Jj,\delta}b_{J\ell, \delta})}.
    \end{aligned}
  \end{align}
  The numerator of the right-hand side is
  \begin{equation*}
    \sum_{i=1}^{n}
    \Bigg(1-\frac{C_{j}}{b_{Jj,\delta}}\lVert(x_{i})_{\mathcal{V}-}\rVert^{\gamma_{j}}-\frac{C_{J}}{b_{Jj,\delta}}\lVert(x_{i})_{\mathcal{V}+}\rVert^{\gamma_{J}}\Bigg)_{+}
    \Bigg(1-\frac{C_{\ell}}{b_{J\ell,\delta}}\lVert(x_{i})_{\mathcal{V}-}\rVert^{\gamma_{\ell}}-\frac{C_{J}}{b_{J\ell,\delta}}\lVert(x_{i})_{\mathcal{V}+}\rVert^{\gamma_{J}}\Bigg)_{+}.
  \end{equation*}
  We investigate the term
  \begin{equation*}
    \int \left(1-\frac{C_{j}}{b_{Jj,\delta}}\lVert(x)_{\mathcal{V}-}\rVert^{\gamma_{j}}-\frac{C_{J}}{b_{Jj,\delta}}\lVert(x)_{\mathcal{V}+}\rVert^{\gamma_{J}}\right)_{+}
    \left(1-\frac{C_{\ell}}{b_{J\ell,\delta}}\lVert(x)_{\mathcal{V}-}\rVert^{\gamma_{\ell}}-\frac{C_{J}}{b_{J\ell,\delta}}\lVert(x)_{\mathcal{V}+}\rVert^{\gamma_{J}}\right)_{+}
    dx.
  \end{equation*}

  We consider the case $ \gamma_{j} > \gamma_{\ell} $, but the case where
  $\gamma_{j} = \gamma_{\ell}$ can be dealt with by taking analogous steps. By a
  similar argument made in the proof of Theorem \ref{thm:rate_result}, showing
  that this integral term and
  $
  b_{J\ell,\delta}^{1+{k_{+}}/{\gamma_{\ell}}+{(k-k_{+})}/{\gamma_{j}}}b_{Jj,\delta}n
  $ are both $o(1)$ will establish
  $ \mathrm{cov}\left(V_{j}(\delta),V_{\ell}(\delta)\right) \to 0$. By Theorem
  \ref{thm:rate_result}, we have $b_{Jj,\delta} << b_{J\ell,\delta}$, with both
  going to $0$ as $n \to \infty.$ By applying a change of variable
  \begin{equation*}
    (x_{[1,m]}/b_{J\ell,\delta}^{1/\gamma_{\ell}},x_{[m+1,k]}/b_{J\ell,\delta}^{1/\gamma_{J}})=z,
  \end{equation*}
  we have for a given orthant $O \in \mathcal{O}$
  \begin{align*}
    & \int_{ O}\left(1-\frac{C_{j}}{b_{Jj,\delta}}\lVert(x)_{\mathcal{V}-}\rVert^{\gamma_{j}}-\frac{C_{J}}{b_{Jj,\delta}}\lVert(x)_{\mathcal{V}+}\rVert^{\gamma_{J}}\right)_{+}
      \left(1-\frac{C_{\ell}}{b_{J\ell,\delta}}\lVert(x)_{\mathcal{V}-}\rVert^{\gamma_{\ell}}-\frac{C_{J}}{b_{J\ell,\delta}}\lVert(x)_{\mathcal{V}+}\rVert^{\gamma_{J}}\right)_{+}dx \\
    = &
        b_{J\ell,\delta}^{{m}/{\gamma_{\ell}}+{(k-m)}/{\gamma_{J}}}\int_{O}
        I_{j\ell J, \delta}(z)\, dz.
  \end{align*}
  Here, $I_{j\ell J, \delta}(z)$ is defined as
  \begin{align*}
    &I_{j\ell J, \delta}(z)\\
    =&\left(1-C_{j}\lVert(z_{[1,m]}
       b_{J\ell,\delta}^{1/{\gamma_{\ell}}} b_{Jj,\delta}^{-1/{\gamma_{j}}},0,z_{[k_{+}+1,k]}b_{J\ell,\delta}^{1/{\gamma_{J}}}b_{Jj,\delta}^{-1/{\gamma_{j}}})\rVert^{\gamma_{j}}-C_{J}\lVert(0,-z_{[m+1,k]}b_{J\ell,\delta}^{1/{\gamma_{J}}}b_{Jj,\delta}^{-1/{\gamma_{J}}})\rVert^{\gamma_{J}}\right)_{+}
    \\
    &\cdot \left(1-C_{\ell}\lVert(z_{[1,m]},0,z_{[k_{+}+1,k]}b_{J\ell,\delta}^{1/{\gamma_{J}}-1/{\gamma_{\ell}}})\rVert^{\gamma_{\ell}}-C_{J}\lVert(0,-z_{[m+1,k]})\rVert^{\gamma_{J}}\right)_{+}
  \end{align*}

  The limit behavior of this term depends on the limit of the following four
  quantities:\\[-1.5ex]

  \hspace{55pt} 1)
  $b_{J\ell,\delta}^{1/{\gamma_{\ell}}} b_{Jj,\delta}^{-1/{\gamma_{j}}}$, 2)
  $b_{J\ell,\delta}^{1/{\gamma_{J}}}b_{Jj,\delta}^{-1/{\gamma_{j}}}$, 3)
  $b_{J\ell,\delta}^{1/{\gamma_{J}}}b_{Jj,\delta}^{-1/{\gamma_{J}}}$, and 4)
  $b_{J\ell,\delta}^{1/{\gamma_{J}}-1/{\gamma_{\ell}}}.$\\[-1.5ex]

  \noindent Since
  $\gamma > \gamma_{\ell} \geq \gamma $ and
  $b_{J\ell,\delta} \asymp n^{-1/{2 + k_{+}/\gamma_{\ell} + (k-k_{+})/\gamma}}$,
  we have $b_{Jj,\delta}<<b_{J\ell,\delta}$ and
  $ b_{J\ell,\delta}^{1/{\gamma}} << b_{J\ell,\delta}^{1/{\gamma_{\ell}}} <<
  b_{Jj,\delta}^{ 1/{\gamma}}$. This gives \\[-1.5ex]

  \hspace{2pt} 1)
  $b_{J\ell,\delta}^{1/{\gamma_{\ell}}} b_{Jj,\delta}^{-1/{\gamma_{j}}} \to 0$, 2)
  $b_{J\ell,\delta}^{1/{\gamma_{J}}}b_{Jj,\delta}^{-1/{\gamma_{j}}} \to 0$, 3)
  $b_{J\ell,\delta}^{1/{\gamma_{J}}}b_{Jj,\delta}^{-1/{\gamma_{J}}} \to \infty$, and 4)
  $b_{J\ell,\delta}^{1/{\gamma_{J}}-1/{\gamma_{\ell}}} \to 0.$ \\[-1.5ex]

  \noindent Hence, we have
  \begin{equation*}
    \int_{O}
    I_{j\ell J, \delta}(z) \,dz = o(1),
  \end{equation*}
  by a dominated convergence argument, and the convergence rate is the slowest
  on the orthant where $m = k_{+}$.

  Now, it remains to show that
  \begin{equation*}
    b_{J\ell,\delta}^{{k_{+}}/{\gamma_{\ell}}+{(k-k_{+})}/{\gamma_{J}}}b_{J\ell,\delta}b_{Jj,\delta}n = o(1).
  \end{equation*}
  Note that the order of the expression on the left-hand side is $n^{r}$ where
  $r$ is
  \begin{equation*}
    \frac{1}{2+k_{+}/\gamma_{\ell}+(k-k_{+})/\gamma_{J}} -
    \frac{1}{2+k_{+}/\gamma_{j}+(k-k_{+})/\gamma_{J}} < 0.
  \end{equation*}
  This establishes that
  \begin{equation*}
    \label{eq:9}
    \text{cov}\left(V_{j}(\delta),V_{\ell}(\delta)\right) \to 0 
  \end{equation*}
  for any $j \neq \ell$ and for any $\delta > 0.$ 

  Now, to establish the second half of the lemma, consider the case when
  $\gamma_{j} = \gamma$ for all $j$. In such case, we have
  \begin{align*}
    & \int_{O}\left(1-\frac{C_{j}}{b_{Jj,\delta}}\lVert(x)_{\mathcal{V}-}\rVert^{\gamma}-\frac{C_{J}}{b_{Jj,\delta}}\lVert(x)_{\mathcal{V}+}\rVert^{\gamma}\right)_{+}
      \left(1-\frac{C_{\ell}}{b_{J\ell,\delta}}\lVert(x)_{\mathcal{V}-}\rVert^{\gamma}-\frac{C_{J}}{b_{J\ell,\delta}}\lVert(x)_{\mathcal{V}+}\rVert^{\gamma}\right)_{+}dx \\
    = &
        b_{J\ell,\delta}^{\frac{m}{\gamma}+\frac{k-m}{\gamma}}\int_{O}  \Bigg(  \Big(1-C_{\ell}\lVert(z_{[1,m]},0,z_{[k_{+}+1,k]}b_{J\ell,\delta}^{\frac{1}{\gamma}-\frac{1}{\gamma}})\rVert^{\gamma}-C_{J}\lVert(0,-z_{[m+1,k]})\rVert^{\gamma}\Big)_{+}
        \cdot \\
    & 
      \Big(1-C_{j}\lVert(z_{[1,m]}
      b_{J\ell,\delta}^{\frac{1}{\gamma}} b_{Jj,\delta}^{-\frac{1}{\gamma}},0,z_{[k_{+}+1,k]}b_{J\ell,\delta}^{\frac{1}{\gamma}}b_{Jj,\delta}^{-\frac{1}{\gamma}})\rVert^{\gamma}-C_{J}\lVert(0,-z_{[m+1,k]}b_{J\ell,\delta}^{\frac{1}{\gamma}}b_{Jj,\delta}^{-\frac{1}{\gamma}})\rVert^{\gamma}\Big)_{+}\Bigg)dx
    \\
    = &  b_{J\ell,\delta}^{\frac{k}{\gamma}}\int_{O}\Bigg(    \left(1-C_{\ell}\lVert(z_{[1,m]},0,z_{[k_{+}+1,k]})\rVert^{\gamma}-C_{J}\lVert(0,-z_{[m+1,k]})\rVert^{\gamma}\right)_{+}
        \cdot \\
    &  \Big(1-C_{j}\lVert(z_{[1,m]}
      b_{J\ell,\delta}^{\frac{1}{\gamma}} b_{Jj,\delta}^{-\frac{1}{\gamma}},0,z_{[k_{+}+1,k]}b_{J\ell,\delta}^{\frac{1}{\gamma}}b_{Jj,\delta}^{-\frac{1}{\gamma}})\rVert^{\gamma}-C_{J}\lVert(0,-z_{[m+1,k]}b_{J\ell,\delta}^{\frac{1}{\gamma}}b_{Jj,\delta}^{-\frac{1}{\gamma}})\rVert^{\gamma}\Big)_{+}\Bigg)
      dx
  \end{align*}

  We know that
  $ b_{J\ell,\delta} \asymp n^{-\frac{1}{2+k/\gamma}}
  \left(\delta^{2}/c_{J\ell}^{\ast}\right)^{\frac{1}{2+k/\gamma}}$ for some
  constant $c^{\ast}_{J\ell}$, by Theorem \ref{thm:rate_result}. It follows that
  \begin{equation*}
    (b_{J\ell,\delta}/b_{Jj,\delta})^{1/\gamma} \asymp (c^{\ast}_{Jj}/c^{\ast}_{J\ell})^{\frac{1/\gamma}{2+k/\gamma}},
  \end{equation*}
  which then implies
  \begin{align*}
    &  
      \int_{O}\left(1-C_{1}\lVert(z_{[1,m]}
      b_{J\ell,\delta}^{\frac{1}{\gamma_{\ell}}} b_{Jj,\delta}^{-\frac{1}{\gamma}},0,z_{[k_{+}+1,k]}b_{J\ell,\delta}^{\frac{1}{\gamma}}b_{Jj,\delta}^{-\frac{1}{\gamma}})\rVert^{\gamma_{1}}-C_{2}\lVert(0,-z_{[m+1,k]}b_{J\ell,\delta}^{\frac{1}{\gamma}}b_{Jj,\delta}^{-\frac{1}{\gamma}})\rVert^{\gamma_{2}}\right)_{+}
      \cdot \\
    & \hspace{80pt}
      \left(1-C_{\ell}\lVert(z_{[1,m]},0,z_{[k_{+}+1,k]})\rVert^{\gamma_{1}}-C_{J}\lVert(0,-z_{[m+1,k]})\rVert^{\gamma_{2}}\right)_{+}dx
    \\
    =    &  
           \int_{O} \Big(\left(1-C_{\ell}\lVert(z_{[1,m]},0,z_{[k_{+}+1,k]})\rVert^{\gamma}-C_{J}\lVert(0,-z_{[m+1,k]})\rVert^{\gamma}\right)_{+}
           \cdot \\
    & \Big(1-C_{j} \Big(\frac{c^{\ast}_{Jj}}{c^{\ast}_{J\ell}}\Big)^{\frac{1}{2+k/\gamma}}\lVert(z_{[1,m]}
      ,0,z_{[k_{+}+1,k]})\rVert^{\gamma}-C_{J}\Big(\frac{c^{\ast}_{Jj}}{c^{\ast}_{J\ell}}\Big)^{\frac{1}{2+k/\gamma}}\lVert(0,-z_{[m+1,k]})\rVert^{\gamma}\Big)_{+} \Big)
      dx + o(1).
  \end{align*}
  Denote the integral term following the last equality as $B_{j\ell, O}$, and
  $B_{j\ell} = \sum_{O \in \mathcal{O}}B_{j\ell, O}$. Plugging this result back
  into \eqref{eq:covV}, we have that
  \begin{equation*}
    \label{eq:5}
    \text{cov}\left(V_{j}(\delta),V_{\ell}(\delta)\right) =
    b_{J\ell,\delta}^{k/\gamma}b_{Jj,\delta}b_{J\ell,\delta}n B_{j\ell} (1+ o(1))/\delta^{2}.
  \end{equation*}
  Now, note that
  \begin{align*}
    b_{J\ell,\delta}^{k/\gamma}b_{Jj,\delta}b_{J\ell,\delta}n B_{j\ell} /\delta^{2}
    = & \left(\delta^{2}/c_{J\ell}^{\ast}\right)^{\frac{k/\gamma}{2+k/\gamma}}
        \left(\delta^{2}/c_{J\ell}^{\ast}\right)^{\frac{1}{2+k/\gamma}}
        \left(\delta^{2}/c_{Jj}^{\ast}\right)^{\frac{1}{2+k/\gamma}} B_{j\ell}
        /\delta^{2} + o(1) \\
    =  & c_{J\ell}^{\ast}{}^{-\frac{k/\gamma}{2+k/\gamma}}
         c_{J\ell}^{\ast}{}^{-\frac{1}{2+k/\gamma}}
         c_{Jj}^{\ast}{}^{-\frac{1}{2+k/\gamma}} B_{j\ell} + o(1).
  \end{align*}
  While this calculation is sufficient to show the invariance of the limiting
  covariance with respect to $\delta$, we further simplify the term by some
  additional calculations.

  By changing the role of $j$ and $\ell$ in the above
  change of variables, we know that
  \begin{align*}
    & b_{J\ell,\delta}^{\frac{k}{\gamma}}\int_{O}     \left(1-C_{\ell}\lVert(z_{[1,m]},0,z_{[k_{+}+1,k]})\rVert^{\gamma}-C_{J}\lVert(0,-z_{[m+1,k]})\rVert^{\gamma}\right)_{+}
      \cdot \\
    & \hspace{20pt}\left(1-C_{j} (c^{\ast}_{Jj}/c^{\ast}_{J\ell})^{\frac{1}{2+k/\gamma}}\lVert(z_{[1,m]}
      ,0,z_{[k_{+}+1,k]})\rVert^{\gamma}-C_{J}(c^{\ast}_{Jj}/c^{\ast}_{J\ell})^{\frac{1}{2+k/\gamma}}\lVert(0,-z_{[m+1,k]})\rVert^{\gamma}\right)_{+}
      dx
    \\ =  & b_{Jj,\delta}^{\frac{k}{\gamma}}\int_{O}              \left(1-C_{j} \lVert(z_{[1,m]}
            ,0,z_{[k_{+}+1,k]})\rVert^{\gamma}-C_{J}\lVert(0,-z_{[m+1,k]})\rVert^{\gamma}\right)_{+}
            \cdot \\
    & \hspace{20pt}   \left(1-C_{\ell}(c^{\ast}_{J\ell}/c^{\ast}_{Jj})^{\frac{1}{2+k/\gamma}}\lVert(z_{[1,m]},0,z_{[k_{+}+1,k]})\rVert^{\gamma}-C_{J}(c^{\ast}_{J\ell}/c^{\ast}_{Jj})^{\frac{1}{2+k/\gamma}}\lVert(0,-z_{[m+1,k]})\rVert^{\gamma}\right)_{+}
      dx.
  \end{align*}

  Now, consider the change of variables given by
  \begin{equation*}
    (x_{[1,m]}/(b_{J\ell,\delta}^{1/(2\gamma)}b_{Jj,\delta}^{1/(2\gamma)}),x_{[m+1,k]}/(b_{J\ell,\delta}^{1/(2\gamma)}b_{Jj,\delta}^{1/(2\gamma)}))=z.
  \end{equation*}
  We have
  \begin{align*}
    & \int_{ O}\left(1-\frac{C_{j}}{b_{Jj,\delta}}\lVert(x)_{\mathcal{V}-}\rVert^{\gamma}-\frac{C_{J}}{b_{Jj,\delta}}\lVert(x)_{\mathcal{V}+}\rVert^{\gamma}\right)_{+}
      \left(1-\frac{C_{\ell}}{b_{J\ell,\delta}}\lVert(x)_{\mathcal{V}-}\rVert^{\gamma}-\frac{C_{J}}{b_{J\ell,\delta}}\lVert(x)_{\mathcal{V}+}\rVert^{\gamma}\right)_{+}dx \\
    = &
        b_{Jj,\delta}^{\frac{k}{2\gamma}}b_{J\ell,\delta}^{\frac{k}{2\gamma}}
        \,\, \cdot \\
    &\int_{O}\left(1-C_{j}(c^{\ast}_{Jj}/c^{\ast}_{J\ell})^{\frac{1/(2\gamma)}{2+k/\gamma}}\lVert(z_{[1,m]}
      ,0,z_{[k_{+}+1,k]})\rVert^{\gamma}-C_{J}(c^{\ast}_{Jj}/c^{\ast}_{J\ell})^{\frac{1/(2\gamma)}{2+k/\gamma}} \lVert(0,-z_{[m+1,k]})\rVert^{\gamma}\right)_{+}
      \cdot \\
    & \hspace{20pt}
      \left(1-C_{\ell}(c^{\ast}_{J\ell}/c^{\ast}_{Jj})^{\frac{1/(2\gamma)}{2+k/\gamma}}\lVert(z_{[1,m]},0,z_{[k_{+}+1,k]})\rVert^{\gamma}-C_{J}(c^{\ast}_{J\ell}/c^{\ast}_{Jj})^{\frac{1/(2\gamma)}{2+k/\gamma}}\lVert(0,-z_{[m+1,k]})\rVert^{\gamma}\right)_{+}dx \\
    & + o\big( b_{Jj,\delta}^{\frac{k}{2\gamma}}b_{J\ell,\delta}^{\frac{k}{2\gamma}}\big)
  \end{align*}
  Here, we used
  \begin{equation*}
    (b_{J\ell,\delta}/b_{Jj,\delta})^{1/(2\gamma)} =
    (c^{\ast}_{Jj}/c^{\ast}_{J\ell})^{\frac{1/(2\gamma)}{2+k/\gamma}} + o(1)
  \end{equation*}
  Now, write the integral in the last term as $B^{\ast}_{j\ell, O}$, and
  $B^{\ast}_{j\ell} = \sum_{O \in \mathcal{O}}B^{\ast}_{j\ell,O}$ Finally, by
  similar calculations as above
  \begin{align*}
    \label{eq:12}
    \text{cov}\left(V_{j}(\delta),V_{\ell}(\delta)\right)
    & = n b_{Jj,\delta}^{\frac{k}{2\gamma}}b_{J\ell,\delta}^{\frac{k}{2\gamma}} b_{Jj,\delta}b_{J\ell,\delta} B^{\ast}_{j\ell}/
      \delta^{2} + o(1) \\
    & =  c_{Jj}^{\ast}{}^{-\frac{k/(2\gamma)}{2+k/\gamma}} c_{J\ell}^{\ast}{}^{-\frac{k/(2\gamma)}{2+k/\gamma}}
      c_{Jj}^{\ast}{}^{-\frac{1}{2+k/\gamma}} c_{J\ell}^{\ast}{}^{-\frac{1}{2+k/\gamma}}
      B^{\ast}_{j\ell} + o(1) \\
    & = c_{Jj}^{\ast}{}^{-1/2} c_{J\ell}^{\ast}{}^{-1/2}
      B^{\ast}_{j\ell} + o(1).
  \end{align*}
  This shows that
  $ \text{cov}\left(V_{j}(\delta),V_{\ell}(\delta)\right) \to
  c_{Jj}^{\ast}{}^{-1/2} c_{J\ell}^{\ast}{}^{-1/2} B^{\ast}_{j\ell}$ as
  $n \to \infty$. Note that the limiting covariance term does not depend on
  $\delta$.

  Note that we have
  $V_{j}(\delta) \overset{d}{=} \sum_{i=1}^{n}D_{Jj,\delta}(x_{i})Z_{i}/\delta$
  where $Z_{i}$'s are i.i.d standard normal random variables. Furthermore, this
  equivalence holds jointly for $V_{j}(\delta)$, $j = 1, \dots, J$.  Let
  $\{x_{i}\}_{i=1}^{\infty}$ be a sequence where the under which where Theorem
  \ref{thm:rate_result} holds. Define for $C \in [C_{1}, C_{J}]$
  \begin{equation*}
    Z_{ni}(C) =   ( \omega(\delta,\mathcal{F}_{J},\Lambda_{+,\mathcal{V}}(\gamma, C))-C\lVert (x_{i})_{\mathcal{V}-}\rVert ^{\gamma}-C_{J}\lVert (x_{i})_{\mathcal{V}+}\rVert ^{\gamma})_{+}Z_{i}/\delta,
  \end{equation*}
  and consider the stochastic process $\sum_{i=1}^{n}Z_{ni}(C)$ indexed by
  $C \in [C_{1}, C_{J}]$. We show that this process weakly converges to a tight
  Gaussian process, from which the fact that the quantile of the maximum of
  $V(\delta)$ does not depend on $J$ follows.

  We use Theorem 2.11.1 of \cite{vandervaart1996WeakConvergenceEmpirical} to
  establish this convergence. Specifically, we use the result given by Example
  2.11.13. Given the results we already have, it suffices to show that
  \begin{equation*}
    \sum_{i=1}^{n }\left\lvert  \frac{\partial}{\partial C}  ( \omega(\delta,\mathcal{F}_{J},\Lambda_{+,\mathcal{V}}(\gamma, C))-C\lVert (x_{i})_{\mathcal{V}-}\rVert ^{\gamma}-C_{J}\lVert (x_{i})_{\mathcal{V}+}\rVert)_{+} \right\rvert^{2} = O(1),
  \end{equation*}
  and that a Lindeberg condition is satisfied.

  With some abuse of notation, we write
  \begin{equation*}
    D_{C,n,\delta}(x_{i}) :=  ( \omega(\delta,\mathcal{F}_{J},\Lambda_{+,\mathcal{V}}(\gamma, C))-C\lVert (x_{i})_{\mathcal{V}-}\rVert ^{\gamma}-C_{J}\lVert (x_{i})_{\mathcal{V}+}\rVert)_{+},
  \end{equation*}
  and
  $\omega(\delta, C_{J}, C) =
  \omega(\delta,\mathcal{F}_{J},\Lambda_{+,\mathcal{V}}(\gamma, C))$ with
  $\omega^{-1}(b, C_{J}, C)$ defined similarly. Recall that
  \begin{equation*}
    \omega^{-1}(b, C_{J}, C) = \left( \sum_{i=1}^{n} ( b -
      C \lVert (x_{i})_{\mathcal{V}-} \rVert^{\gamma} - C_{J} \lVert (x_{i})_{\mathcal{V}+} \rVert^{\gamma})  ^{2}_{+} \right)^{\frac{1}{2}}.
  \end{equation*}
  From the identity $ \delta = \omega^{-1}(\omega(\delta, C_{J}, C), C_{J}, C)$,
  we have
  \begin{equation*}
    0 = \frac{\partial}{\partial b} \omega^{-1}(\omega(\delta, C_{J}, C),
    C_{J}, C)\frac{\partial}{\partial C}\omega(\delta, C_{J}, C) +
    \frac{\partial}{\partial C}\omega^{-1}(\omega(\delta, C_{J}, C), C_{J}, C)
  \end{equation*}
  so that
  \begin{align*}
    \frac{\partial}{\partial C}\omega(\delta, C_{J}, C) & = -\frac{ \frac{\partial}{\partial C}\omega^{-1}(\omega(\delta, C_{J}, C), C_{J}, C)}{\frac{\partial}{\partial b} \omega^{-1}(\omega(\delta, C_{J}, C),
                                                          C_{J}, C)} \\
                                                        & = \frac{ \sum_{i=1}^{n} \lVert (x_{i})_{\mathcal{V}-} \rVert^{\gamma}
                                                          \left[ \omega(\delta, C_{J}, C) -
                                                          C \lVert (x_{i})_{\mathcal{V}-} \rVert^{\gamma} - C_{J} \lVert (x_{i})_{\mathcal{V}+} \rVert^{\gamma}  \right]_{+}}{\sum_{i=1}^{n}
                                                          \left[  \omega(\delta, C_{J}, C) -
                                                          C \lVert (x_{i})_{\mathcal{V}-} \rVert^{\gamma} - C_{J} \lVert (x_{i})_{\mathcal{V}+} \rVert^{\gamma}  \right]_{+}}.
  \end{align*}
  This gives
  \begin{align*}
    \frac{\partial}{\partial C}  D_{C,n,\delta}(x_{i}) =&\left(  \frac{ \sum_{i=1}^{n} \lVert (x_{i})_{\mathcal{V}-} \rVert^{\gamma} \left[ \omega(\delta, C_{J}, C) -
                                                          C \lVert (x_{i})_{\mathcal{V}-} \rVert^{\gamma} - C_{J} \lVert (x_{i})_{\mathcal{V}+} \rVert^{\gamma}  \right]_{+}}{\sum_{i=1}^{n}
                                                          \left[ \omega(\delta, C_{J}, C) -
                                                          C \lVert (x_{i})_{\mathcal{V}-} \rVert^{\gamma} - C_{J} \lVert (x_{i})_{\mathcal{V}+} \rVert^{\gamma}  \right]_{+}} - \lVert (x_{i})_{\mathcal{V}-} \rVert^{\gamma}  \right) \\
                                                        &\cdot \mathds{1}\left( \omega(\delta, C_{J}, C) -
                                                          C \lVert (x_{i})_{\mathcal{V}-} \rVert^{\gamma} - C_{J} \lVert
                                                          (x_{i})_{\mathcal{V}+} \rVert^{\gamma} \geq 0  \right)\\
    =&\left(  \frac{ \sum_{k=1}^{n} (\lVert (x_{k})_{\mathcal{V}-}
       \rVert^{\gamma} -   \lVert (x_{i})_{\mathcal{V}-} \rVert^{\gamma})\left[ \omega(\delta, C_{J}, C) -
       C \lVert (x_{k})_{\mathcal{V}-} \rVert^{\gamma} - C_{J} \lVert (x_{k})_{\mathcal{V}+} \rVert^{\gamma}  \right]_{+}}{\sum_{k=1}^{n}
       \left[ \omega(\delta, C_{J}, C) -
       C \lVert (x_{k})_{\mathcal{V}-} \rVert^{\gamma} - C_{J} \lVert
       (x_{k})_{\mathcal{V}+} \rVert^{\gamma}  \right]_{+}} \right) \\
                                                        &\cdot \mathds{1}\left( \omega(\delta, C_{J}, C) -
                                                          C \lVert (x_{i})_{\mathcal{V}-} \rVert^{\gamma} - C_{J} \lVert
                                                          (x_{i})_{\mathcal{V}+} \rVert^{\gamma} \geq 0  \right),
  \end{align*}
  with the understanding that the fraction equals $0$ if the denominator is $0.$
  We have
  \begin{align*}
    &\left\lvert \frac{\partial}{\partial C}
      D_{C,n,\delta}(x_{i})(x_{i}) \right\rvert^{2} \\
    \leq & \left(  \frac{ \omega(\delta, C_{J}, C)\sum_{k=1}^{n} \left[ \omega(\delta, C_{J}, C) -
           C \lVert (x_{k})_{\mathcal{V}-} \rVert^{\gamma} - C_{J} \lVert (x_{k})_{\mathcal{V}+} \rVert^{\gamma}  \right]_{+}}{C\sum_{k=1}^{n}
           \left[ \omega(\delta, C_{J}, C) -
           C \lVert (x_{k})_{\mathcal{V}-} \rVert^{\gamma} - C_{J} \lVert
           (x_{k})_{\mathcal{V}+} \rVert^{\gamma}  \right]_{+}} \right)^{2} \\
    &\cdot \mathds{1}\left( \omega(\delta, C_{J}, C) -
      C \lVert (x_{i})_{\mathcal{V}-} \rVert^{\gamma} - C_{J} \lVert
      (x_{i})_{\mathcal{V}+} \rVert^{\gamma} \geq 0  \right) \\
    \leq &  (\omega(\delta, C_{J}, C)/C)^{2} \mathds{1}\left( \omega(\delta, C_{J}, C) -
           C \lVert (x_{i})_{\mathcal{V}-} \rVert^{\gamma} - C_{J} \lVert
           (x_{i})_{\mathcal{V}+} \rVert^{\gamma} \geq 0  \right),
  \end{align*}
  so that
  \begin{align*}
    & \sum_{i=1}^{n} \left\lvert \frac{\partial}{\partial C}
      D_{C,n,\delta}(x_{i})(x_{i}) \right\rvert^{2} \\
    \leq &  (\omega(\delta, C_{J}, C)/C)^{2}\sum_{i=1}^{n} \mathds{1}\left( \omega(\delta, C_{J}, C) -
           C \lVert (x_{i})_{\mathcal{V}-} \rVert^{\gamma} - C_{J} \lVert
           (x_{i})_{\mathcal{V}+} \rVert^{\gamma} \geq 0  \right) \\
    \asymp & n^{-\frac{2}{2 + k/\gamma}} \cdot  n^{1-\frac{k/\gamma}{2+\gamma/k}}
             = O(1).
  \end{align*}

  To check the Lindeberg condition, note that
  \begin{equation*}
    \lVert Z_{ni} \rVert :=  \sup_{C \in [C_{1}, C_{J}]}\lvert Z_{ni}(C)  \rvert
    \leq \frac{\omega(\delta, C_{J}, C_{J})}{\delta}\lvert Z_{i} \rvert
  \end{equation*}
  so that
  \begin{align*}
    & \sum_{i=1}^{n}\mathbf{E} \lVert Z_{ni} \rVert \mathds{1}( \lVert
      Z_{ni} \rVert > \eta ) \\
    \leq & \frac{\omega_{n}(\delta, C_{J}, C_{J})}{\delta}
           \sum_{i=1}^{n}\mathbf{E} \lvert Z_{i} \rvert \mathds{1}(
           \frac{\omega_{n}(\delta, C_{J}, C_{J})}{\delta}\lvert
           Z_{i} \rvert > \eta )  \\
    \leq & \frac{n \omega_{n}(\delta, C_{J}, C_{J})}{\delta} \mathbf{E} \lvert
           Z_{i} \rvert \mathds{1}( \frac{\omega_{n}(\delta, C_{J},
           C_{J})}{\delta}\lvert Z_{i} \rvert > \eta ) \\
    = & 2 \frac{n \omega_{n}(\delta, C_{J}, C_{J})}{\delta} \mathbf{E} 
        Z_{i}  \mathds{1}( \frac{\omega_{n}(\delta, C_{J},
        C_{J})}{\delta} Z_{i}  > \eta ) \\
    = & 2 \frac{n \omega_{n}(\delta, C_{J},
        C_{J})}{\delta}\phi( \frac{\eta \delta}{\omega_{n}(\delta,
        C_{J}, C_{J})} ) \to 0.
  \end{align*}

  We have already shown that the covariance function converges pointwise. Hence,
  we conclude that $\sum_{i=1}^{n}Z_{ni} $ converges in distribution in
  $\ell^{\infty}([C_{1}, C_{J}])$ to a tight Gaussian process. Moreoever, this
  limiting distribution does not depend on $\delta$.
\end{proof}

\newpage

\section{Proof of Theorem \ref{thm:rate_result}}
\label{sec: proof main thm}
\begin{proof}
  For simplicity, write $b_{n} = b n^{-r(\gamma_{1}, \gamma_{2})}$, where
  $b > 0$ is arbitrary, and define
  \begin{equation*}
    W_{i,n}(\gamma_{1}, C_{1}, \gamma_{2},
    C_{2}):=\left(1-\frac{C_{1}}{b_{n}}\left\lVert \left(X_{i}\right)_{\mathcal{V}+}\right\rVert ^{\gamma_{1}}-\frac{C_{2}}{b_{n}}\left\lVert \left(X_{i}\right)_{\mathcal{V}-}\right\rVert ^{\gamma_{2}}\right)_{+}^{2}.
  \end{equation*}
  Note that $b_{n} \to 0$ and
  $n^{1-\eta}b_{n}^{k_{+}/\gamma_{1}+(k-k_{+})/\gamma_{2}} \to \infty $ for some
  $ \eta > 0 $. First, we show that, for constants $c_{2,1}^{\ast}$ and
  $c_{2,1}^{\ast}$ that do not depend on $b$,
  \begin{align*}
    \mathrm{(a)}\,\,\,  &\lim_{n\to\infty}\frac{1}{nb_{n}^{k_{+}/\gamma_{1}+(k-k_{+})/\gamma_{2}}}\sum_{i=1}^{n}W_{i,n}(\gamma_{1},C_{1},\gamma_{2},C_{2})
                          =c_{1,2}^{\ast}>0,\\
                        &\lim_{n\to\infty}\frac{1}{nb_{n}^{k_{+}/\gamma_{1}+(k-k_{+})/\gamma_{2}}}\sum_{i=1}^{n}W_{i,n}(\gamma_{2},C_{2},\gamma_{1},C_{1})
                          =c_{2,1}^{\ast}>0 \,\, , \text{ and} \\
    \mathrm{(b)}\,\,\,  &
                          \lim_{n\to\infty}b_{n}^{-1}\min_{i\leq
                          n}\left\{
                          C_{1}\left\lVert \left(X_{i}\right)_{\mathcal{V}+}\right\rVert ^{\gamma_{1}}+C_{2}\left\lVert \left(X_{i}\right)_{\mathcal{V}c-}\right\rVert ^{\gamma_{2}}\right\}
                          = 0 \\
                        & \lim_{n\to\infty}b_{n}^{-1}\min_{i\leq
                          n}\left\{
                          C_{2}\left\lVert \left(X_{i}\right)_{\mathcal{V}+}\right\rVert ^{\gamma_{2}}+C_{2}\left\lVert \left(X_{i}\right)_{\mathcal{V}-}\right\rVert ^{\gamma_{1}}\right\}
                          =0,
  \end{align*}
  where all equalities hold in an almost sure sense.

  To show (a), take an arbitrary $\varepsilon>0$. Due to the regularity
  conditions on $p_{X}(\cdot)$ and $\sigma(\cdot)$, there exists a neighborhood
  $\mathcal{N}_{\varepsilon}$ of $0$ such that $\lvert p_{X}(x) - p_{X}(0)
  \rvert \leq \varepsilon $ for all $x \in \mathcal{N}_{\varepsilon}$. Writing
  $ B_{n}:=\left\{ x\in\mathbb{R}^{k}:b_{n}-C_{1}\left\lVert
        \left(x\right)_{\mathcal{V}+}\right\rVert
    ^{\gamma_{1}}-C_{2}\left\lVert
        \left(x\right)_{\mathcal{V}-}\right\rVert
    ^{\gamma_{2}}>0\right\}$, there exists $N_{\varepsilon}$ such that for all
  $n\geq N_{\varepsilon}$ we have
  $B_{n}\subset\mathcal{N_{\varepsilon}} \cap \mathcal{X}$ because $b_{n}\to 0$
  and the interior of $ \mathcal{X}$ contains $0.$ Hence, for
  $n\geq N_{\varepsilon}$, we have
  \begin{align}
    \begin{aligned}
      & \,\,(p_{X}(0)-\varepsilon)\int_{B_{n}}\left(1-\frac{C_{1}}{b_{n}}\left\lVert \left(x\right)_{\mathcal{V}+}\right\rVert ^{\gamma_{1}}-\frac{C_{2}}{b_{n}}\left\lVert \left(x\right)_{\mathcal{V}-}\right\rVert ^{\gamma_{2}}\right)^{2}dx \\
      \leq & \,\,\E W_{i,n}\\
      \leq &
      \,\,\left(p_{X}(0)+\varepsilon\right)\int_{B_{n}}\left(1-\frac{C_{1}}{b_{n}}\left\lVert
          \left(x\right)_{\mathcal{V}+}\right\rVert
        ^{\gamma_{1}}-\frac{C_{2}}{b_{n}}\left\lVert
          \left(x\right)_{\mathcal{V}-}\right\rVert ^{\gamma_{2}}\right)^{2}dx.
    \end{aligned}
\label{eq:EWin_bound}
  \end{align}
  
  Let $\mathcal{O}$ be the collection of the $2^{k}$ orthants on
  $\mathbb{R}^{k}.$ Then, we can write
  \begin{align}
    \begin{aligned}
      &  \int_{B_{n}}\left(1-\frac{C_{1}}{b_{n}}\left\lVert \left(x\right)_{\mathcal{V}+}\right\rVert ^{\gamma_{1}}-\frac{C_{2}}{b_{n}}\left\lVert \left(x\right)_{\mathcal{V}-}\right\rVert ^{\gamma_{2}}\right)^{2}dx \\
      = & \sum_{O\in\mathcal{O}}\int_{B_{n}\cap
        O}\left(1-\frac{C_{1}}{b_{n}}\left\lVert
          \left(x\right)_{\mathcal{V}+}\right\rVert
        ^{\gamma_{1}}-\frac{C_{2}}{b_{n}}\left\lVert
          \left(x\right)_{\mathcal{V}-}\right\rVert ^{\gamma_{2}}\right)^{2}dx.
    \end{aligned}
\label{eq:sum_orth}
  \end{align}  
  Now, consider an orthant $O$ and let $O_{+}\subset\left\{ 1,\dots,k\right\} $
  be the index set for those elements that take positive values on $O.$ Without
  loss of generality, suppose
  $O_{+}\cap\mathcal{V}=\left\{ 1,\dots,m\right\} $\footnote{Here, we are
    implicitly assuming that we modify the definition of the norm in a way that
    corresponds to the relabeling. More formally, we could write the modified
    norm as $\lVert \cdot \rVert_{O}$, which we do not do for succinctness. Note
    that this modification is unnecessary when $\lVert z \rVert$ is invariant
    with respect to permutations of $z$, which is the case for (unweighted)
    $\ell_{p}$ norms.} for $m=0,\dots,k$, where we take
  $O_{+}\cap\mathcal{V}=\emptyset$ if $m=0.$ For $k_{1}\leq k_{2},$ define the
  subvector $z_{[k_{1},k_{2}]}=(z_{k_{1},}z_{k_{1}+1},\dots,z_{k_{2}})$ for any
  $z:=(z_{1},\dots,z_{k})\in\mathbb{R}^{k}$. It follows that
  \begin{align*}
    & \int_{B_{n}\cap O}\left(1-\frac{C_{1}}{b_{n}}\left\lVert \left(x\right)_{\mathcal{V}+}\right\rVert ^{\gamma_{1}}-\frac{C_{2}}{b_{n}}\left\lVert \left(x\right)_{\mathcal{V}-}\right\rVert ^{\gamma_{2}}\right)^{2}dx\\
    = & \int_{B_{n}\cap O}\left(1-\frac{C_{1}}{b_{n}}\left\lVert (x_{[1,m]},0,x_{[k_{+}+1,k]})\right\rVert ^{\gamma_{1}}-\frac{C_{2}}{b_{n}}\left\lVert (0,-x_{[m+1,k_{+}]},-x_{[k_{+}+1,k]})\right\rVert ^{\gamma_{2}}\right)^{2}dx.
  \end{align*}
  By applying a changes of variables with
  $(x_{[1,m]}/b_{n}^{1/\gamma_{1}},x_{[m+1,k]}/b^{1/\gamma_{2}})=z$, the last
  equation becomes
  \begin{align*}
    \begin{aligned}
      & \int_{B_{n}\cap O}\left(1-\frac{C_{1}}{b_{n}}\left\lVert (x_{[1,m]},0,x_{[k_{+}+1,k]})\right\rVert ^{\gamma_{1}}-\frac{C_{2}}{b_{n}}\left\lVert (0,-x_{[m+1,k]})\right\rVert ^{\gamma_{2}}\right)^{2}dx \\
      = &
      b_{n}^{\frac{m}{\gamma_{1}}+\frac{k-m}{\gamma_{2}}}\int_{O}\left(1-C_{1}\left\lVert
          (z_{[1,m]},0,z_{[k_{+}+1,k]}b_{n}^{\frac{1}{\gamma_{2}}-\frac{1}{\gamma_{1}}})\right\rVert
        ^{\gamma_{1}}-C_{2}\left\lVert (0,-z_{[m+1,k]})\right\rVert
        ^{\gamma_{2}}\right)_{+}^{2}dx.
    \end{aligned}
  \end{align*}
  Note that by Lebesgue's dominated convergence theorem the integral in the last
  expression can be written as $c_{O}(C_{1,}C_{2})+o(1)$ where
  \begin{align*}
    &c_{O}(C_{1},C_{2})  \\
    :=&\begin{cases}
      \int_{O}\left(1-C_{1}\left\lVert (z_{[1,m]},0)\right\rVert
        ^{\gamma_{1}}-C_{2}\left\lVert (0,-z_{[m+1,k]})\right\rVert
        ^{\gamma_{2}}\right)_{+}^{2}dz &\text{if }\gamma_{1}>\gamma_{2}\\
      \int_{O}\left(1-C_{1}\left\lVert (z_{[1,m]},0,z_{[k_{+}+1,k]})\right\rVert
        ^{\gamma_{1}}-C_{2}\left\lVert (0,-z_{[m+1,k]})\right\rVert
        ^{\gamma_{2}}\right)_{+}^{2}dz &\text{if }\gamma_{1}=\gamma_{2}.
    \end{cases}
  \end{align*}
  Hence, we have
  \[
    \int_{O}\left(1-\frac{C_{1}}{b_{n}}\left\lVert
          \left(x\right)_{\mathcal{V}+}\right\rVert
      ^{\gamma_{1}}-\frac{C_{2}}{b_{n}}\left\lVert
          \left(x\right)_{\mathcal{V}-}\right\rVert
      ^{\gamma_{2}}\right)_{+}^{2}dx=b_{n}^{\frac{m}{\gamma_{1}}+\frac{k-m}{\gamma_{2}}}\left(c_{O}(C_{1},C_{2})+o(1)\right).
  \]
  Moreover, note that $c_{O}(C_{1},C_{2})>0.$ If $\gamma_{1}>\gamma_{2}$, the
  integrals that correspond to the orthants where $m=k_{+}$ determine the rate
  at which the entire integral goes to $0$. If $\gamma_{1}=\gamma_{2}$ note that
  the exponent of $b_{n}$ is always $k/\gamma_{1}$ and thus the integral is of
  the same order (in terms of $b_{n}$) on all the orthants. Let
  $\mathcal{O_{+}}$ denote the collection of those orthants with $m=k_{+},$ and
  write $c_{+}(C_{1},C_{2})=\sum_{O\in\mathcal{O}_{+}}c_{O}(C_{1},C_{2})$ if
  $\gamma_{1}>\gamma_{2}$ and
  $c_{+}(C_{1},C_{2})=\sum_{O\in\mathcal{O}}c_{O}(C_{1},C_{2})$ if
  $\gamma_{1}=\gamma_{2}$. Then, it follows that
  \[
    \int\left(1-\frac{C_{1}}{b_{n}}\left\lVert
          \left(x\right)_{\mathcal{V}+}\right\rVert
      ^{\gamma_{1}}-\frac{C_{2}}{b_{n}}\left\lVert
          \left(x\right)_{\mathcal{V}-}\right\rVert
      ^{\gamma_{2}}\right)_{+}^{2}dx=b_{n}^{\frac{k_{+}}{\gamma_{1}}+\frac{k-k_{+}}{\gamma_{2}}}\left(c_{+}(C_{1},C_{2})+o(1)\right).
  \]
  Combining this with (\ref{eq:EWin_bound}), it follows that
  \begin{align*}
    \begin{aligned}
      &\left(c_{+}(C_{1},C_{2})+o(1)\right)(p_{X}(0)-\varepsilon)b_{n}^{\frac{k_{+}}{\gamma_{1}}+\frac{k-k_{+}}{\gamma_{2}}} \\
      \leq & \E W_{i,n} \\[-1.5ex] 
      \leq &\left(c_{+}(C_{1},C_{2})+o(1)\right)(p_{X}(0)+\varepsilon)b_{n}^{\frac{k_{+}}{\gamma_{1}}+\frac{k-k_{+}}{\gamma_{2}}} \end{aligned}
  \end{align*}
  for large $n$. Dividing all sides by
  $b_{n}^{k_{+}/\gamma_{1}+\left(k-k_{+}\right)/\gamma_{2}},$ taking
  $n\to\infty,$ and then taking $\varepsilon\to0$, we have
  \begin{equation}
    \lim_{n\to\infty}\E \frac{W_{i,n}}{b_{n}^{k_{+}/\gamma_{1}+(k-k_{+})/\gamma_{2}}}=c_{+}(C_{1},C_{2})p_{X}(0).\label{eq:Integralrate}
  \end{equation}

  Now, consider the term $EW^{2}_{i,n}$. We have
  \begin{align}
    & \,\,(p_{X}(0)-\varepsilon)\int_{B_{n}}\left(1-\frac{C_{1}}{b_{n}}\left\lVert \left(x\right)_{\mathcal{V}+}\right\rVert ^{\gamma_{1}}-\frac{C_{2}}{b_{n}}\left\lVert \left(x\right)_{\mathcal{V}-}\right\rVert ^{\gamma_{2}}\right)^{4}dx\nonumber \\
    \leq & \,\,\E W^{2}_{i,n}\nonumber \\
    \leq & \,\,\left(p_{X}(0)+\varepsilon\right)\int_{B_{n}}\left(1-\frac{C_{1}}{b_{n}}\left\lVert \left(x\right)_{\mathcal{V}+}\right\rVert ^{\gamma_{1}}-\frac{C_{2}}{b_{n}}\left\lVert \left(x\right)_{\mathcal{V}-}\right\rVert ^{\gamma_{2}}\right)^{4}dx.\label{eq:EW2in_bound}
  \end{align}
  Hence, repeating the exact same steps that we went through for
  $\E W_{i,n}$, we have
  \[
    \lim_{n\to\infty}\E \frac{W_{i,n}^{2}}{b_{n}^{k_{+}/\gamma_{1}+(k-k_{+})/\gamma_{2}}}=c^{^{\dagger}}p_{X}(0),
  \]
  for some $c^{\dagger}>0$, which shows that
  $(\E {W_{i,n}^{2}})^{1/2} \asymp
  {b_{n}^{(k_{+}/\gamma_{1}+(k-k_{+})/\gamma_{2})/2}}.$

  Now, define $\widetilde{W}_{n}:= \frac{1}{n} \sum_{i=1}^{n} \left( W_{i,n} - \E W_{i,n}
  \right)$ and
  $\varepsilon_{n} = \varepsilon b_{n}^{k_{+}/\gamma_{1}+(k-k_{+})/\gamma_{2}}.$
  By Bernstein's inequality, we have
  \begin{equation*}
    \p ( \lvert \widetilde{W}_{n} \rvert > \varepsilon_{n}   )
     \leq 2 \exp \left( - \frac{1}{2} \frac{n \varepsilon_{n}^{2}}{  \E W_{i,n}^{2}
      + \varepsilon_{n}/3}  \right) \\
     \leq 2 \exp \left( - \frac{1}{2} \frac{n \varepsilon_{n}}{  K
      + 1/3}  \right)
  \end{equation*}
  where the last inequality holds for large enough $n$ and some constant
  $K > 0$. It follows that, for large $n$,
  \begin{equation*}
    \exp \left( - \frac{1}{2} \frac{n \varepsilon_{n}}{  K
        + 1/3}  \right) = \exp \left( - n^{\eta} \frac{1}{2} \frac{n^{1-\eta}  b_{n}^{k_{+}/\gamma_{1}+(k-k_{+})/\gamma_{2}} \varepsilon}{  K
        + 1/3}  \right) \leq \exp \left( - n^{\eta}  \right),
  \end{equation*}
  where the inequality follows from the fact that
  $n^{1-\eta} b_{n}^{k_{+}/\gamma_{1}+(k-k_{+})/\gamma_{2}} \to \infty$. This
  shows that
  $\sum_{n=1}^{\infty} \p ( \lvert \widetilde{W}_{n} \rvert >
    \varepsilon_{n} ) < \infty $. By the Borel-Cantelli
  lemma, we have
  \begin{equation}
    \frac{1}{nb_{n}^{k_{+}/\gamma_{1}+(k-k_{+})/\gamma_{2}}}\sum_{i=1}^{n}\left(W_{i,n}-\E W_{i,n}\right)\overset{a.s.}{\to}0.\label{eq:SLLN}
  \end{equation}

  Combining (\ref{eq:Integralrate}) and (\ref{eq:SLLN}), we have
  \[
    \lim_{n\to\infty}\frac{1}{nb_{n}^{k_{+}/\gamma_{1}+(k-k_{+})/\gamma_{2}}}\sum_{i=1}^{n}W_{i,n}=c_{+}(C_{1},C_{2})p_{X}(0)
  \]
  almost surely, which establishes the desired result with
  $c_{1,2}^{\ast}=c_{+}(C_{1},C_{2})p_{X}(0)$. Note that $c^{\ast}_{1,2}$ does
  not depend on $b$.

  The proof for
  \[
    \lim_{n\to\infty}\frac{1}{nb_{n}^{k_{+}/\gamma_{1}+(k-k_{+})/\gamma_{2}}}\sum_{i=1}^{n}W_{i,n}(\gamma_{2},C_{2},\gamma_{1},C_{1})=c_{2,1}^{\ast}>0
  \]
  is essentially the same, with some minor modifications. The change of
  variables we previously used should be modified to
  \[
    (x_{[1,m]}/b_{n}^{1/\gamma_{2}},x_{[m+1,k_{+}]}/b_{n}^{1/\gamma_{1}},x_{[k_{+}+1,k]}/b_{n}^{1/\gamma_{2}})=z,
  \]
  and, the constant $c_{+}(C_{1},C_{2})$ should be changed to
  $c_{-}(C_{1},C_{2}):=\sum_{O\in\mathcal{O}_{-}}c_{O}(C_{2},C_{1})$ where
  $\mathcal{O}_{-}$ is the collection of orthants with $m=0.$\footnote{Again,
    the norms must be redefined to be consistent with the ``relabeling''.}
  Hence, here we get the desired result with
  $c_{2,1}^{\ast}=c_{-}(C_{1},C_{2})p_{X}(0),$ which again does not depend on
  $b$.

  Now, we prove (b). We only give the proof for
  \[
    \lim_{n\to\infty}b_{n}^{-1}\min_{i\leq n}\left\{ C_{1}\left\lVert \left(X_{i}\right)_{\mathcal{V}+}\right\rVert
      ^{\gamma_{1}}+C_{2}\left\lVert
          \left(X_{i}\right)_{\mathcal{V}-}\right\rVert
      ^{\gamma_{2}}\right\} =0 \,\, a.s.,
  \]
  since the other half of the statement can be proved analogously. Let
  $\varepsilon>0$ be an arbitrary constant, and denote the event
  \[
    A_{n,\varepsilon}:=\left\{ b_{n}^{-1}\min_{i\leq n}\left\{ C_{1}\left\lVert \left(X_{i}\right)_{\mathcal{V}+}\right\rVert
        ^{\gamma_{1}}+C_{2}\left\lVert
            \left(X_{i}\right)_{\mathcal{V}-}\right\rVert
        ^{\gamma_{2}}\right\} \geq\varepsilon\right\} .
  \]
  Note that it is enough to show
  $\Sigma_{n=1}^{\infty}P\left(A_{n,\varepsilon}\right)<\infty,$ since then the
  result follows from the Borel-Cantelli lemma. We have
  \begin{align*}
    \p \left(A_{n,\varepsilon}\right) & =\p \left(\min_{i\leq n}\left\{ C_{1}\left\lVert \left(X_{i}\right)_{\mathcal{V}+}\right\rVert ^{\gamma_{1}}+C_{2}\left\lVert \left(X_{i}\right)_{\mathcal{V}-}\right\rVert ^{\gamma_{2}}\right\} \geq b_{n}\varepsilon\right)\\
                                    & =\p (C_{1}\lVert (X_{i})_{\mathcal{V}+}\rVert ^{\gamma_{1}}+C_{2}\lVert (X_{i})_{\mathcal{V}-}\rVert ^{\gamma_{2}}\geq b_{n}\varepsilon)^{n}\\
                                    &
                                      =(1-\p (C_{1}\lVert (X_{i})_{\mathcal{V}+}\rVert ^{\gamma_{1}}+C_{2}\lVert (X_{i})_{\mathcal{V}-}\rVert ^{\gamma_{2}}
                                      < b_{n}\varepsilon))^{n}.
  \end{align*}
  By an analogous calculation as in (a), we can show
  \begin{align*}
    &
      \p (C_{1}\left\lVert \left(X_{1}\right)_{\mathcal{V}+}\right\rVert ^{\gamma_{1}}+C_{2}\left\lVert \left(X_{2}\right)_{\mathcal{V}-}\right\rVert ^{\gamma_{2}}
      < b_{n}\varepsilon)\\
    = & b_{n}^{k_{+}/\gamma_{1}+(k-k_{+})/\gamma_{2}}\left(c+o(1)\right),
  \end{align*}
  where $c>0$ and the $o(1)$ term is also positive. This gives, for large $n$
  and from some positive constant $K > 0$,
  \begin{align*}
    \p (A_{n,\varepsilon})
    \leq &\left(1-cb_{n}^{k_{+}/\gamma_{1}+(k-k_{+})/\gamma_{2}}\right)^{n}\\
    \leq & \exp\left(-cnb_{n}^{k_{+}/\gamma_{1}+(k-k_{+})/\gamma_{2}}\right) \\
    = &
        \exp\left(-cn^{\eta}n^{1-\eta}b_{n}^{k_{+}/\gamma_{1}+(k-k_{+})/\gamma_{2}}\right) \\
    \leq & \exp\left(-cn^{\eta}K\right)
  \end{align*}
  This shows that
  $ \sum_{n=1}^{\infty} \p (A_{n,\varepsilon}) \leq
  \sum_{n=1}^{\infty}\exp(-cnb_{n}^{-(k_{+}/\gamma_{1}+(k-k_{+})/\gamma_{2})})<\infty$,
  which establishes (b).

  Now, using (a) and (b), we prove the given rate result. Let
  $\{x_{i}\}_{i=1}^{\infty}$ be a realization of
  $\left\{ X_{i} \right\}_{i=1}^{\infty}$ such that (a) and (b) hold, which is
  the case for almost all realizations. We prove the result for only
  $\omega\left(\delta,\Lambda_{+,\mathcal{V}}\left(\gamma_{1},C_{1}\right),\Lambda_{+,\mathcal{V}}\left(\gamma_{2},C_{2}\right)\right)$
  because the proof for
  $\omega\left(\delta,\Lambda_{+,\mathcal{V}}\left(\gamma_{2},C_{2}\right),\Lambda_{+,\mathcal{V}}\left(\gamma_{1},C_{1}\right)\right)$
  is essentially the same. Throughout the proof, we write
  $w_{i,n}:=w_{i,n}(\gamma_{1},C_{1},\gamma_{2},C_{2})$ for simplicity. Define
  \[
    \widetilde{\omega}_{n}(\delta):=
    n^{r(\gamma_{1},\gamma_{2})}\omega\left(\delta,\Lambda_{+,\mathcal{V}}\left(\gamma_{1},C_{1}\right),\Lambda_{+,\mathcal{V}}\left(\gamma_{2},C_{2}\right)\right),
  \]
  and
  $\widetilde{\omega}_{\infty}(\delta)=(\delta^{2}/c^{\ast})^{\frac{1}{2+k_{+}/\gamma_{1}+(k-k_{+})/\gamma_{2}}}.$
  We want to show
  $\widetilde{\omega}_{n}(\delta)\to\widetilde{\omega}_{\infty}(\delta)$ for all
  $\delta>0.$ On the range of $\widetilde{\omega}_{n}(\cdot)$, define its
  inverse $\widetilde{\omega}_{n}^{-1}(b)$ for $b>0$:
  \[
    \widetilde{\omega}_{n}^{-1}(b)=\omega^{-1}\left(n^{-r(\gamma_{1},
        \gamma_{2})}b,\Lambda_{+,\mathcal{V}}\left(\gamma_{1},C_{1}\right),\Lambda_{+,\mathcal{V}}\left(\gamma_{2},C_{2}\right)\right),
  \]
  and let ${b}_{n}=n^{-r(\gamma_{1}, \gamma_{2})}b$. It follows that
  \begin{align*}
    \widetilde{\omega}_{n}^{-1}(b) & =\Big({b}_{n}^{2}\textstyle\sum\limits_{i=1}^{n}w_{i,n}\Big)^{1/2}\\
                                   & =\Big(n{b}_{n}^{2+k_{+}/\gamma_{1}+(k-k_{+})/\gamma_{2}}\frac{1}{n{b}_{n}^{k_{+}/\gamma_{1}+(k-k_{+})/\gamma_{2}}}\textstyle\sum\limits_{i=1}^{n}w_{i,n}\Big)^{1/2}\\
                                   & \to\left(b^{2+k_{+}/\gamma_{1}+(k-k_{+})/\gamma_{2}}c_{1,2}^{\ast}\right)^{1/2},
  \end{align*}
  where the last line follows by (a). Defining
  $\widetilde{\omega}_{\infty}^{-1}(b)=\left(b^{2+k_{+}/\gamma_{1}+(k-k_{+})/\gamma_{2}}c_{1,2}^{\ast}\right)^{1/2}$,
  which is the precisely the inverse function of
  $\widetilde{\omega}_{\infty}(\cdot)$, on an appropriately defined domain. Now,
  if we can show that any $b>0$ is in the range of
  $\widetilde{\omega}_{n}(\cdot)$ for large enough $n$, we can apply Lemma F.1
  of \citet{armstrong2016optimal} to establish that
  $\widetilde{\omega}_{n}(\delta)\to\widetilde{\omega}_{\infty}(\delta)$ for all
  $\delta>0$. To this end, it is enough to show
  \[
    \lim_{n\to\infty}n^{r(\gamma_{1},\gamma_{2})}\omega\left(0,\Lambda_{+,\mathcal{V}}\left(\gamma_{1},C_{1}\right),\Lambda_{+,\mathcal{V}}\left(\gamma_{2},C_{2}\right)\right)\to0.
  \]
  Following the derivation of the solution to the inverse modulus problem, it is
  easy to check that
  \[
    \omega\left(0,\Lambda_{+,\mathcal{V}}\left(\gamma_{1},C_{1}\right),\Lambda_{+,\mathcal{V}}\left(\gamma_{2},C_{2}\right)\right)=\min_{i\leq
      n}\left\{ C_{1}\left\lVert
          \left(x_{i}\right)_{\mathcal{V}+}\right\rVert
      ^{\gamma_{1}}+C_{2}\left\lVert
          \left(x_{i}\right)_{\mathcal{V}-}\right\rVert
      ^{\gamma_{2}}\right\} .
  \]
  It remains only to show
  \[
    \lim_{n\to\infty}n^{r(\gamma_{1},\gamma_{2})}\min_{i\leq n}\left\{
      C_{1}\left\lVert \left(x_{i}\right)_{\mathcal{V}+}\right\rVert ^{\gamma_{1}}+C_{2}\left\lVert
          \left(x_{i}\right)_{\mathcal{V}-}\right\rVert
      ^{\gamma_{2}}\right\} =0,
  \]
  which is immediate from (b).
\end{proof}

\section{Heteroskedasticity\label{apdx:Heteroskedasiticy} }

In Theorem \ref{thm:rate_result}, we assume $\sigma(\cdot)=1$. However, allowing
for general heteroskedasticity do not change the result as long as
$\sigma(\cdot)$ is continuous at $0$ and $\sigma(0)>0.$ All proofs follow with
minor changes. The solution to the inverse modulus problem remain unchanged. For
Theorem \ref{thm:rate_result}, we can take $\varepsilon\in(0,\sigma(0))$ and
replace the terms $p_{X}(0)-\varepsilon$ and $p_{X}(0)+\varepsilon$ by
$\left(p_{X}(0)-\varepsilon\right)/$$\left(\sigma(0)+\varepsilon\right)$ and
$\left(p_{X}(0)+\varepsilon\right)/$$\left(\sigma(0)-\varepsilon\right)$ in
(\ref{eq:EWin_bound}).  Accordingly, we replace the right-hand side of
(\ref{eq:Integralrate}) by $cp_{X}(0)/\sigma(0),$ and the result of the
theorem remains the same with a slightly modified definition of the constant terms.

\section{Adaptation Under Only Monotonicity}
\label{apdx:Onlymon}
Define the $\Lambda_{+,\mathcal{V}}(0,\infty)$ the space of monotone functions
with respect to those variables whose indices lie in $\mathcal{V}.$
Specifically,

\[
  \Lambda_{+,\mathcal{V}}(0,\infty):=\left\{
    f\in\mathcal{F}(\mathbb{R}^{k}):f(x)\geq f(z)\,\,\text{if }x_{i}\geq
    z_{i}\text{ }\forall i\in\ensuremath{\mathcal{V}}\text{ and
    }x_{i}=z_{i}\,\,\forall i\notin\mathcal{V}\right\} .
\]
Here, we consider the problem of adapting to $\Lambda_{+,\mathcal{V}}(\gamma,C)$
while maintaining coverage over $\Lambda_{+,\mathcal{V}}(0,\infty).$ The
corresponding inverse (ordered) modulus problem
\begin{align*}
  & \inf_{f_{1},f_{2}}\,\,\left(\sum_{i=1}^{n}\left(f_{2}(x_{i})-f_{1}(x_{i})\right)^{2}\right)^{1/2}\\
  \text{s.t. } & \text{}f_{2}(0)-f_{1}(0)=b,\,\,f_{1}\in\Lambda_{+,\mathcal{V}}(\gamma_{,}C),f_{2}\in\Lambda_{+,\mathcal{V}}(0,\infty).
\end{align*}
Let
$f_{1}^{*}(x)=\min\left\{ C\left\lVert
      \left(x\right)_{\mathcal{V}+}\right\rVert ^{\gamma},b\right\}
$, and
\begin{align*}
  f_{2}^{\ast}(x) & =\begin{cases}
    b & \text{if }x_{j}=0\,\,\forall j\notin\mathcal{V}\text{ and }x_{j}\geq0\,\,\forall j\in\mathcal{V}\\
    \min\left\{ C\left\lVert \left(x\right)_{\mathcal{V}+}\right\rVert ^{\gamma},b\right\}  & \text{otherwise}.
  \end{cases}
\end{align*}
First, we argue that $f_{2}^{\ast}\in\Lambda_{+,\mathcal{V}}(0,\infty)$.  To
show this, we must show that for any $x,z\in\mathbb{R}^{k}$,
\[
  f_{2}^{\ast}(x)\geq f_{2}^{\ast}(z)\,\,\text{if }x_{j}\geq z_{j}\text{
  }\forall j\in\ensuremath{\mathcal{V}}\text{ and }x_{j}=z_{j}\,\,\forall
  j\notin\mathcal{V}.
\]
Note that this clearly holds if both $x$ and $z$ fall into the first case or
second case, respectively, in the definition of $f_{2}^{\ast}$.  Now, suppose
$x$ falls into the first case and $z$ into the second.  Then, it must be the
case that $z_{j}\neq0$ for some $j\notin\mathcal{V}$ or $z_{j}<0$ for some
$j\in\mathcal{V}.$ If $z_{j}\neq0$ for some $j\notin\mathcal{V}$, then the
monotonicity condition holds vacuously.  Suppose $z_{j}=0$ for all
$j\notin\mathcal{V}$ and $z_{j}<0$ for some $j\in\mathcal{V}$. If $x_{j}<z_{j}$
for some $j\in\mathcal{V},$then again the monotonicity condition holds
vacuously. If $x_{j}\geq z_{j}$ for all $j\in\mathcal{V},$ then the monotonicity
condition holds only if $f_{2}^{\ast}(x)\geq f_{2}^{\ast}(z)$, which is always
the case because $f_{2}^{\ast}(z)\leq b.$ Define
$A_{\mathcal{V}}:=\left\{ x\in\mathbb{R}^{k}:x_{j}=0\,\,\forall
  j\notin\mathcal{V}\text{ and }x_{j}\geq0\,\,\forall j\in\mathcal{V}\right\} .$
If $\mathcal{V}\subsetneq\left\{ 1,\dots,k\right\} ,$then $A_{\mathcal{V}}$ is a
measure zero set under the Lebesgue measure\footnote{Note that this is not the
  case when $\mathcal{V}=\left\{ 1,\dots,k\right\} $}.  Hence, under the
assumption that the design points are a realization of a random variable that
admits a pdf with respect to the Lebesgue measure, we may assume that
$x_{i}\notin A_{\mathcal{V}}$ for all $i=1,\dots,n$. That is, we have
\[
  \omega^{-1}\left(b,\Lambda_{+,\mathcal{V}}(\gamma_{,}C),\Lambda_{+,\mathcal{V}}(0,\infty)\right)=0
\]
for all $b\geq0.$ On the other hand, if
$\mathcal{V}=\left\{ 1,\dots,k\right\} $, we have
\begin{align*}
  & \omega^{-1}\left(b,\Lambda_{+,\mathcal{V}}(\gamma_{,}C),\Lambda_{+,\mathcal{V}}(0,\infty)\right)\\
  = & \sum_{i=1}^{n}\left(1-\frac{C}{b}\left\lVert \left(x_{i}\right)_{\mathcal{V}+}\right\rVert ^{\gamma}\right)^{2}\mathds{1}\left(b-C\left\lVert \left(x_{i}\right)_{\mathcal{V}+}\right\rVert ^{\gamma}>0,x_{i}\in O_{+}\right),
\end{align*}
where $O_{+}=\left\{ x\in\mathbb{R}^{k}:x_{j}>0\,\,\forall j\right\} .$
Likewise, we have
\begin{align*}
  & \omega^{-1}\left(b,\Lambda_{+,\mathcal{V}}(0,\infty),\Lambda_{+,\mathcal{V}}(\gamma_{,}C)\right)\\
  = & \sum_{i=1}^{n}\left(1-\frac{C}{b}\left\lVert \left(x_{i}\right)_{\mathcal{V}-}\right\rVert ^{\gamma}\right)^{2}\mathds{1}\left(b-C\left\lVert \left(x_{i}\right)_{\mathcal{V}-}\right\rVert ^{\gamma}>0,x_{i}\in O_{-}\right),
\end{align*}
where $O_{-}=\left\{ x\in\mathbb{R}^{k}:x_{j}<0\,\,\forall j\right\} .$ Hence,
in this case, adaptation is possible and resulting CIs end up using only those
data with design points that lie in either the positive or negative orthant.

\section{Definition of the optimal upper CI}
\label{sec:defint-optim-upper}

The following corollary summarizes an analogous result for the upper CI.
\begin{corollary}
  \label{cor:cor-AK-ub}Let
  $\left(f_{j,\delta}^{*},g_{J,\delta}^{*}\right)\in\mathcal{F}_{j}\times\mathcal{F}_{J}$
  solve the inverse modulus
  $\omega\left(\delta,\mathcal{F}_{j},\mathcal{F}_{J}\right):$
  \begin{align*}
    & \sum_{i=1}^{n}\left(g_{J,\delta}^{*}(x_{i})-f_{j,\delta}^{*}(x_{i})\right)^{2}=\delta^{2},\text{ and}\\
    & Lg_{J,\delta}^{*}-Lf_{j,\delta}^{*}=\omega\left(\delta,\mathcal{F}_{j},\mathcal{F}_{J}\right)
  \end{align*}
  with $\delta=z_{\beta}+z_{1-\alpha}$, and define
  \begin{align*}
    \hat{L}_{\delta}^{u,j}  = & \frac{Lf_{j,\delta}^{*}+Lg_{J,\delta}^{*}}{2} \\
    & +\frac{\omega'\left(\delta,\mathcal{F}_{j},\mathcal{F}_{J}\right)}{\delta}\times\sum_{i=1}^{n}\left(g_{J,\delta}^{*}(x_{i})-f_{j,\delta}^{*}(x_{i})\right)\left(y_{i}-\frac{f_{j,\delta}^{*}(x_{i})+g_{J,\delta}^{*}(x_{i})}{2}\right).
  \end{align*}
  Then,
  $\hat{c}_{\alpha}^{u,j}:=\hat{L}_{\delta}^{u,j}+\frac{1}{2}\omega\left(\delta,\mathcal{F}_{j},\mathcal{F}_{J}\right)-\frac{1}{2}\delta\omega'\left(\delta,\mathcal{F}_{j},\mathcal{F}_{J}\right)+z_{1-\alpha}\omega'\left(\delta,\mathcal{F}_{j},\mathcal{F}_{J}\right)$
  solves
  \begin{equation*}
    \underset{\hat{c}:\left(-\infty,\hat{c}\right]\in\mathcal{I}_{\alpha,1,+}^{J}}{\min }\underset{f\in\mathcal{F}_{j}}{\sup}\,q_{f,\beta}(\hat{c}^{U}-Lf).
  \end{equation*}
  Moreover, we have
  \[
    \underset{f\in\mathcal{F}_{j}}{\sup}q_{f,\beta}\left(Lf-\hat{c}_{\alpha}^{\ell,j}\right)\leq\omega\left(\delta,\mathcal{F}_{j},\mathcal{F}_{J}\right).
  \]
  Especially, when $\beta=1/2$, we have
  \begin{equation*}
    \underset{f\in\mathcal{F}_{j}}{\sup}\E_{f}\left(\hat{c}_{\alpha}^{u,j}-Lf\right)\leq\omega\left(z_{1-\alpha},\mathcal{F}_{j},\mathcal{F}_{J}\right).\label{eq:exp_ex_len_U}
  \end{equation*}
  
\end{corollary}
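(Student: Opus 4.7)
My plan is to reduce the claim to Lemma \ref{thm:ArmKol} via a sign-flip of the linear functional. Let $\tilde{L}f := -Lf$. A set $(-\infty, \hat{c}]$ is an upper $1-\alpha$ CI for $Lf$ over $\mathcal{F}_J$ if and only if $[-\hat{c}, \infty)$ is a lower $1-\alpha$ CI for $\tilde{L}f$ over $\mathcal{F}_J$, and the excess lengths agree: $\hat{c} - Lf = \tilde{L}f - (-\hat{c})$. Hence the minimization problem defining the optimal upper CI for $Lf$ is identical to the one defining the optimal lower CI for $\tilde{L}f$, under the bijection $\hat{c} \leftrightarrow -\hat{c}$, and I can invoke Lemma \ref{thm:ArmKol} directly after substituting $\tilde{L}$ for $L$.

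The key identity is that under this substitution the ordered modulus $\omega(\delta, \mathcal{F}_J, \mathcal{F}_j)$ taken with respect to $\tilde{L}$ equals $\omega(\delta, \mathcal{F}_j, \mathcal{F}_J)$ taken with respect to $L$. Indeed, the former is the supremum of $\tilde{L}g_j - \tilde{L}g_J = Lg_J - Lg_j$ subject to the same $\delta$-constraint, which is precisely the latter after relabeling the pair. Consequently, if $(f_{j,\delta}^*, g_{J,\delta}^*)$ solves the inverse modulus for $\omega(\delta, \mathcal{F}_j, \mathcal{F}_J)$ as in the corollary, then $(g_{J,\delta}^*, f_{j,\delta}^*)$ plays the role of the solution pair $(f_{J,\delta}^{*,Jj}, g_{j,\delta}^{*,Jj})$ needed by Lemma \ref{thm:ArmKol} applied to $\tilde{L}$.

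I would then apply Lemma \ref{thm:ArmKol} to $\tilde{L}f$ at $\delta = z_\beta + z_{1-\alpha}$. Inserting the substitutions above into \eqref{eq:lhat_lower} and \eqref{eq:opt lower CI}, the resulting center estimator simplifies to $-\hat{L}_\delta^{u,j}$ and the resulting lower endpoint to $-\hat{c}_\alpha^{u,j}$, where these are the objects defined in the statement. Translating back via the sign flip yields both that $(-\infty, \hat{c}_\alpha^{u,j}]$ solves the upper-CI optimization problem and that $\sup_{f \in \mathcal{F}_j} q_{f,\beta}(\hat{c}_\alpha^{u,j} - Lf) \leq \omega(\delta, \mathcal{F}_j, \mathcal{F}_J)$. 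The $\beta = 1/2$ statement is then immediate: $\hat{c}_\alpha^{u,j} - Lf$ is an affine transformation of the Gaussian data $\{y_i\}$, so its median coincides with its mean. The only real work is bookkeeping, namely tracking signs through the formulas for $\hat{L}_\delta^{u,j}$ and $\hat{c}_\alpha^{u,j}$ to confirm that negating the lower-CI construction for $\tilde{L}f$ reproduces them exactly; no analytic input beyond Lemma \ref{thm:ArmKol} is required, so I do not anticipate a substantive obstacle.
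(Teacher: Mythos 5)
Your proposal is correct and follows essentially the same route as the paper's own proof, which likewise reduces the upper-CI claim to Lemma \ref{thm:ArmKol} (Theorem 3.1 of Armstrong and Kolesár) by applying it to $\widetilde{L}f=-Lf$ and negating the lower endpoint. Your version simply spells out the details the paper leaves implicit (the relabeling identity for the ordered modulus, the swap of the solution pair, and the median-equals-mean step at $\beta=1/2$), all of which are accurate.
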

\end{appendices}
\end{document}